\newtheorem{theorem}{Theorem}[section]
\newtheorem{corollary}{Corollary}[section]
\theoremstyle{definition}
\newtheorem{definition}{Definition}[section]
\newtheorem{remark}{Remark}[section]
\newtheorem{conjecture}{Conjecture}[section]
\numberwithin{equation}{section}
\begin{document}


\title[Periodic bifurcations in descendant trees]
{Periodic bifurcations in \\
descendant trees of finite \(p\)-groups}

\author{Daniel C. Mayer}
\address{Naglergasse 53\\8010 Graz\\Austria}
\email{algebraic.number.theory@algebra.at}
\urladdr{http://www.algebra.at}

\thanks{Research supported by the Austrian Science Fund (FWF): P 26008-N25}

\subjclass[2000]{Primary 20D15, 20F14, 20E18, 20E22, 20F05, 20-04; secondary 05C63}
\keywords{finite \(p\)-group, central series, descendant tree, pro-\(p\) group, coclass tree, \(p\)-covering group,
nuclear rank, multifurcation, coclass graph, pc-presentation, commutator calculus, Schur \(\sigma\)-group}

\date{February 11, 2015}

\begin{abstract}
Theoretical background and an implementation
of the \(p\)-group generation algorithm by Newman and O'Brien
are used to provide computational evidence of
a new type of periodically repeating patterns
in pruned descendant trees of finite \(p\)-groups.
\end{abstract}

\maketitle



\section{Introduction}
\label{s:Intro}

In \S\S\
\ref{s:Structure}
--
\ref{s:HistoryDescTrees},
we present an exposition of facts concerning the mathematical \textit{structure}
which forms the central idea of this article:
descendant trees of finite \(p\)-groups.
Their computational \textit{construction} is recalled in \S\S\
\ref{s:Construction}
--
\ref{s:PruningStrategies}
on the \(p\)-group generation algorithm.
Recently discovered periodic patterns in descendant trees
with promising arithmetical applications
form the topic of the final \S\
\ref{s:PeriodicBifurcations}
and the coronation of the entire work.



\section{The structure: descendant trees}
\label{s:Structure}

In mathematics, specifically group theory,
a \textit{descendant tree} is a hierarchical structure
for visualizing parent-descendant relations (\S\S\
\ref{s:Terminology}
and
\ref{s:TreeDiagram})
between isomorphism classes of finite groups of prime power order \(p^n\),
for a fixed prime number \(p\) and varying integer exponents \(n\ge 0\).
Such groups are briefly called finite \(p\)-groups.
The \textit{vertices} of a descendant tree are isomorphism classes of finite \(p\)-groups.

Additionally to their order \(p^n\), finite \(p\)-groups possess two further related invariants,
the nilpotency class \(c\) and the \textit{coclass} \(r:=n-c\) (\S\S\
\ref{s:CoclassTrees}
and
\ref{s:Multifurcation}).
It turned out that descendant trees of a particular kind,
the so-called \textit{pruned coclass trees} whose infinitely many vertices share a common coclass \(r\),
reveal a \textit{repeating finite pattern} (\S\
\ref{s:VirtualPeriodicity}).
These two crucial properties of finiteness and periodicity,
which have been proved independently by M. du Sautoy
\cite{dS}
and by B. Eick and C.R. Leedham-Green
\cite{EkLg},
admit a characterization of all members of the tree
by finitely many \textit{parametrized presentations} (\S\S\
\ref{s:ConcreteExamples}
and
\ref{s:PeriodicBifurcations}).
Consequently, descendant trees play a fundamental role
in the classification of finite \(p\)-groups.
By means of kernels and targets of \textit{Artin transfer} homomorphisms
\cite{Ar2},
descendant trees can be endowed with additional structure
\cite{Ma2,Ma3,Ma4},
which recently turned out to be decisive for \textit{arithmetical applications} in class field theory,
in particular, for determining the exact length of \(p\)-class towers
\cite{BuMa}.

An important question is how the descendant tree \(\mathcal{T}(R)\) can actually be constructed
for an assigned starting group which is taken as the root \(R\) of the tree.
Sections \S\S\
\ref{s:LowerExponentP} --
\ref{s:SchurMpl}
are devoted to recall a minimum of the necessary background concerning
the \textit{\(p\)-group generation algorithm} by M.F. Newman
\cite{Nm2}
and E.A. O'Brien
\cite{Ob,HEO},
which is a recursive process for constructing the descendant tree
of a foregiven finite \(p\)-group playing the role of the tree root.
This algorithm is now implemented in the ANUPQ-package
\cite{GNO}
of the computational algebra systems GAP
\cite{GAP}
and MAGMA
\cite{MAGMA}.

As a final highlight in \S\
\ref{s:PeriodicBifurcations},
whose formulation requires an understanding of all the preceding sections,
this article concludes with brand-new discoveries of
an unknown, and up to now unproved, kind of repeating infinite patterns
called \textit{periodic bifurcations},
which appeared in extensive computational constructions of descendant trees
of certain finite \(2\)-groups, resp. \(3\)-groups,
\(G\) with abelianization \(G/G^\prime\) of type \((2,2,2)\), resp. \((3,3)\),
and have immediate applications in algebraic number theory and class field theory.



\section{Historical remarks on bifurcation}
\label{s:HistoricalRmksBifurcation}

Since computer aided classifications of finite \(p\)-groups go back to 1975,
fourty years ago, there arises the question
why periodic bifurcations did not show up in the earlier literature already.
At the first sight, this fact seems incomprehensible,
because the smallest two \(3\)-groups which reveal
the phenomenon of periodic bifurcations with modest complexity
were well known to both,
J. A. Ascione, G. Havas and C.R. Leedham-Green
\cite{AHL}
and B. Nebelung
\cite{Ne}.
Their SmallGroups identifiers are \(\langle 729,49\rangle\) and \(\langle 729,54\rangle\)
(see \S\
\ref{s:Identifiers}
and
\cite{BEO1,BEO2}).
Due to the lack of systematic identifiers in 1977,
they were called the \textit{non-CF groups} \(Q\) and \(U\) in
\cite[Tbl.1, p.265, and Tbl.2, p.266]{AHL},
since their lower central series \((\gamma_j(G))_{j\ge 1}\)
has a non-cyclic factor \(\gamma_3(G)/\gamma_4(G)\) of type \((3,3)\).
Similarly, there was no SmallGroups Database yet in 1989, whence the two groups
were designated by \(G_0^{5,6}(0,-1,0,1)\) and  \(G_0^{5,6}(0,0,0,1)\) in
\cite[Satz 6.14, p.208]{Ne}.

So Ascione and Nebelung were both standing in front of the door to a realm of uncharted waters.
The reason why they did not enter this door was the sharp definition of their project targets.
A \textit{bifurcation} is the special case of a \(2\)-fold multifurcation (\S\
\ref{s:Multifurcation}):
At a vertex \(G\) of coclass \(\mathrm{cc}(G)=r\) with nuclear rank \(\nu(G)=2\),
the descendant tree \(\mathcal{T}(G)\) forks into
a \textit{regular} component of the same coclass \(\mathcal{T}^r(G)\) and
an \textit{irregular} component of the next coclass \(\mathcal{T}^{r+1}(G)\).

Ascione's thesis subject
\cite{As1,As2}
in 1979 was to investigate
two-generated \(3\)-groups \(G\) of second maximal class, that is, of coclass \(\mathrm{cc}(G)=2\).
Consequently, she studied the regular component \(\mathcal{T}^2(G)\) for \(G\in\lbrace Q,U\rbrace\)
and did not touch the irregular tree \(\mathcal{T}^3(G)\)
whose members are not of second maximal class.

The goal of Nebelung's dissertation
\cite{Ne}
in 1989 was
the classification of metabelian \(3\)-groups \(G\) with \(G/G^\prime\) of type \((3,3)\).
Therefore she focused on the \textit{metabelian skeleton} \(\mathcal{T}_\ast^2(G)\)
of the regular coclass tree \(\mathcal{T}^2(G)\) for \(G\in\lbrace Q,U\rbrace\)
(a special case of a \textit{pruned} coclass tree, see \S\
\ref{s:VirtualPeriodicity})
and omitted the irregular component \(\mathcal{T}^3(G)\)
whose members are entirely non-metabelian of derived length \(3\).



\section{Definitions and terminology}
\label{s:Terminology}

According to M.F. Newman
\cite[\S\ 2, pp.52--53]{Nm},
there exist several distinct definitions
of the \textit{parent} \(\pi(G)\)
of a finite \(p\)-group \(G\).
The common principle is to form
the quotient \(\pi(G):=G/N\) of \(G\)
by a suitable normal subgroup \(N\unlhd G\)
which can be either

\begin{enumerate}[({P}1)]
\item
the centre \(N=\zeta_1(G)\) of \(G\),
whence \(\pi(G)=G/\zeta_1(G)\) is called \textit{central quotient} of \(G\) or
\item
the last non-trivial term \(N=\gamma_c(G)\) of the lower central series of \(G\),
where \(c\) denotes the nilpotency class of \(G\) or
\item
the last non-trivial term \(N=P_{c-1}(G)\) of the lower exponent-\(p\) central series of \(G\),
where \(c\) denotes the exponent-\(p\) class of \(G\) or
\item
the last non-trivial term \(N=G^{(d-1)}\) of the derived series of \(G\),
where \(d\) denotes the derived length of \(G\).
\end{enumerate}

In each case,
\(G\) is called an \textit{immediate descendant} of \(\pi(G)\)
and a \textit{directed edge} of the tree is defined either by \(G\to\pi(G)\)
in the direction of the canonical projection \(\pi:G\to\pi(G)\) onto the quotient \(\pi(G)=G/N\)
or by \(\pi(G)\to G\) in the opposite direction, which is more usual for descendant trees.
The former convention is adopted by Leedham-Green and Newman
\cite[\S\ 2, pp.194--195]{LgNm},
by du Sautoy and D. Segal
\cite[\S\ 7, p.280]{dSSg},
by Leedham-Green and S. McKay
\cite[Dfn.8.4.1, p.166]{LgMk},
and by Eick, Leedham-Green, Newman and O'Brien
\cite[\S\ 1]{ELNO}.
The latter definition is used by Newman
\cite[\S\ 2, pp.52--53]{Nm},
by Newman and O'Brien
\cite[\S\ 1, p.131]{NmOb},
by du Sautoy
\cite[\S\ 1, p.67]{dS},
by H. Dietrich, Eick and D. Feichtenschlager
\cite[\S\ 2, p.46]{DEF}
and by Eick and Leedham-Green
\cite[\S\ 1, p.275]{EkLg}.

In the following, the direction of the canonical projections is selected for all edges.
Then, more generally, a vertex \(R\) is a \textit{descendant} of a vertex \(P\),
and \(P\) is an \textit{ancestor} of \(R\),
if either \(R\) is equal to \(P\)
or there is a \textit{path}

\begin{equation}
\label{eqn:Path}
R=Q_0\to Q_1\to\cdots\to Q_{m-1}\to Q_m=P,\text{ with }m\ge 1,
\end{equation}

\noindent
of directed edges from \(R\) to \(P\).
The vertices forming the path necessarily coincide with
the \textit{iterated parents} \(Q_j=\pi^{j}(R)\) of \(R\), with \(0\le j\le m\):

\begin{equation}
\label{eqn:IteratedParents}
R=\pi^{0}(R)\to\pi^{1}(R)\to\cdots\to\pi^{m-1}(R)\to\pi^{m}(R)=P,\text{ with }m\ge 1.
\end{equation}

\noindent
In the most important special case (P2) of parents defined as last non-trivial lower central quotients,
they can also be viewed as the \textit{successive quotients} \(R/\gamma_{c+1-j}(R)\) \textit{of class} \(c-j\) of \(R\)
when the nilpotency class of \(R\) is given by \(c\ge m\):

\begin{equation}
\label{eqn:SuccessiveQuotients}
R\simeq R/\gamma_{c+1}(R)\to R/\gamma_{c}(R)\to\cdots\to R/\gamma_{c+2-m}(R)\to R/\gamma_{c+1-m}(R)\simeq P,
\end{equation}

\noindent
with \(c\ge m\ge 1\).

Generally, the \textit{descendant tree} \(\mathcal{T}(G)\) of a vertex \(G\) is
the subtree of all descendants of \(G\), starting at the \textit{root} \(G\).
The \textit{maximal} possible descendant tree \(\mathcal{T}(1)\) of the trivial group \(1\)
contains all finite \(p\)-groups and is somewhat exceptional, since,
for any parent definition (P1--P4),
the trivial group \(1\) has infinitely many abelian \(p\)-groups as its immediate descendants.
The parent definitions (P2--P3) have the advantage that
any non-trivial finite \(p\)-group (of order divisible by \(p\))
possesses only finitely many immediate descendants.



\section{Pro-\(p\) groups and coclass trees}
\label{s:CoclassTrees}

For a sound understanding of coclass trees as a particular instance of descendant trees,
it is necessary to summarize some facts concerning \textit{infinite topological pro-\(p\) groups}.
The members \(\gamma_j(S)\), with \(j\ge 1\), of the lower central series of a pro-\(p\) group \(S\)
are open and closed subgroups of finite index,
and therefore the corresponding quotients \(S/\gamma_j(S)\) are finite \(p\)-groups.
The pro-\(p\) group \(S\) is said to be of \textit{coclass} \(\mathrm{cc}(S):=r\)
when the limit \(r=\lim_{j\to\infty}\,\mathrm{cc}(S/\gamma_j(S))\)
of the coclass of the successive quotients exists and is finite.
An infinite pro-\(p\) group \(S\) of coclass \(r\) is a \textit{\(p\)-adic pre-space group}
\cite[Dfn.7.4.11, p.147]{LgMk},
since it has a normal subgroup \(T\), the \textit{translation group},
which is a free module over the ring \(\mathbb{Z}_p\) of \(p\)-adic integers
of uniquely determined rank \(d\), the \textit{dimension},
such that the quotient \(P=S/T\) is a finite \(p\)-group, the \textit{point group},
which \textit{acts on \(T\) uniserially}.
The dimension is given by

\begin{equation}
\label{eqn:Dimension}
d=(p-1)p^{s},\text{ with some }0\le s<r.
\end{equation}

A central finiteness result for infinite pro-\(p\) groups of coclass \(r\)
is provided by the so-called \textit{Theorem D},
which is one of the five \textit{Coclass Theorems} proved in 1994 independently by A. Shalev
\cite{Sv}
and by C.R. Leedham-Green
\cite[Thm.7.7, p.66]{Lg},
and conjectured in 1980 already by Leedham-Green and Newman
\cite[\S\ 2, pp.194--196]{LgNm}.
Theorem D asserts that there are only finitely many isomorphism classes
of infinite pro-\(p\) groups of coclass \(r\),
for any fixed prime \(p\) and any fixed non-negative integer \(r\).
As a consequence, if \(S\) is an infinite pro-\(p\) group of coclass \(r\), then
there exists a minimal integer \(i\ge 1\) such that
the following three conditions are satisfied for any integer \(j\ge i\).

\begin{itemize}
\item
\(\mathrm{cc}(S/\gamma_j(S))=r\),
\item
\(S/\gamma_j(S)\) is not a lower central quotient of any infinite pro-\(p\) group of coclass \(r\)
which is not isomorphic to \(S\),
\item
\(\gamma_j/\gamma_{j+1}(S)\) is cyclic of order \(p\).
\end{itemize}

The descendant tree \(\mathcal{T}(R)\), with respect to the parent definition (P2),
of the root \(R=S/\gamma_i(S)\) with minimal \(i\)
is called the \textit{coclass tree} \(\mathcal{T}(S)\) of \(S\)
and its unique maximal infinite (reverse-directed) path

\begin{equation}
\label{eqn:MainLine}
R=S/\gamma_i(S)\leftarrow S/\gamma_{i+1}(S)\leftarrow S/\gamma_{i+2}(S)\leftarrow\cdots
\end{equation}

\noindent
is called the \textit{mainline} (or trunk) of the tree.



\section{Tree diagram}
\label{s:TreeDiagram}

Further terminology, used in diagrams visualizing finite parts of descendant trees,
is explained in Figure
\ref{fig:TreeNotation}
by means of an artificial abstract tree.
On the left hand side, a \textit{level} indicates the basic top-down design of a descendant tree.
For concrete trees, such as those in Figures
\ref{fig:2GroupsCoclass1},
resp.
\ref{fig:3GroupsCoclass1},
etc.,
the level is usually replaced by a scale of orders increasing from the top to the bottom.
A vertex is \textit{capable} (or \textit{extendable}) if it has at least one immediate descendant,
otherwise it is \textit{terminal} (or a \textit{leaf}).
Vertices sharing a common parent are called \textit{siblings}.

{\tiny

\begin{figure}[ht]
\caption{Terminology for descendant trees}
\label{fig:TreeNotation}


\setlength{\unitlength}{1cm}
\begin{picture}(10,12)(-4,-11)

\put(-4,0.5){\makebox(0,0)[cb]{Level \(n\)}}
\put(-4,0){\line(0,-1){10}}
\multiput(-4.1,0)(0,-1){11}{\line(1,0){0.2}}
\put(-4.2,0){\makebox(0,0)[rc]{\(0\)}}
\put(-4.2,-1){\makebox(0,0)[rc]{\(1\)}}
\put(-4.2,-2){\makebox(0,0)[rc]{\(2\)}}
\put(-4.2,-3){\makebox(0,0)[rc]{\(3\)}}
\put(-4.2,-4){\makebox(0,0)[rc]{\(4\)}}
\put(-4.2,-5){\makebox(0,0)[rc]{\(5\)}}
\put(-4.2,-6){\makebox(0,0)[rc]{\(6\)}}
\put(-4.2,-7){\makebox(0,0)[rc]{\(7\)}}
\put(-4.2,-8){\makebox(0,0)[rc]{\(8\)}}
\put(-4.2,-9){\makebox(0,0)[rc]{\(9\)}}
\put(-4.2,-10){\makebox(0,0)[rc]{\(10\)}}
\put(-4,-10){\vector(0,-1){1}}

\put(0,0){\circle*{0.2}}
\multiput(0,-1)(0,-1){9}{\circle*{0.1}}

\put(-1,-1){\circle*{0.1}}

\multiput(-3,-2)(5,0){2}{\circle*{0.1}}
\put(-3,-3){\circle*{0.1}}
\multiput(1,-3)(1,0){3}{\circle*{0.1}}

\put(-1,-4){\circle*{0.1}}
\put(2,-5){\circle*{0.1}}

\multiput(-1,-6)(0,-1){4}{\circle*{0.1}}
\multiput(-2,-6)(0,-1){4}{\circle*{0.1}}
\multiput(-3,-6)(0,-2){2}{\circle*{0.1}}
\multiput(1,-6)(0,-1){4}{\circle*{0.1}}
\multiput(2,-6)(0,-1){4}{\circle*{0.1}}
\multiput(3,-6)(0,-2){2}{\circle*{0.1}}
\multiput(4,-6)(0,-2){2}{\circle*{0.1}}

\multiput(0,0)(0,-1){9}{\line(0,-1){1}}
\put(0,0){\line(-1,-1){1}}

\put(0,-1){\line(-3,-1){3}}
\put(-3,-3){\vector(0,1){0.9}}
\put(0,-1){\line(2,-1){2}}
\put(2,-2){\line(-1,-1){1}}
\put(2,-2){\line(0,-1){1}}
\put(2,-2){\line(1,-1){1}}

\put(0,-3){\line(-1,-1){1}}
\put(0,-3){\line(1,-1){2}}

\multiput(0,-5)(0,-1){4}{\line(-1,-1){1}}
\multiput(0,-5)(0,-1){4}{\line(-2,-1){2}}
\multiput(0,-5)(0,-2){2}{\line(-3,-1){3}}
\multiput(0,-5)(0,-1){4}{\line(1,-1){1}}
\multiput(0,-5)(0,-1){4}{\line(2,-1){2}}
\multiput(0,-5)(0,-2){2}{\line(3,-1){3}}
\multiput(0,-5)(0,-2){2}{\line(4,-1){4}}

\put(-0.2,0){\makebox(0,0)[rc]{root \(R\)}}

\put(-1.2,-1){\makebox(0,0)[rc]{leaf}}
\put(0.2,-1){\makebox(0,0)[lc]{capable vertex}}

\put(-2.7,-2){\makebox(0,0)[lc]{parent \(\pi(V)\)}}
\put(-3.1,-2.5){\makebox(0,0)[rc]{\(\pi\)}}
\put(-2.9,-2.5){\makebox(0,0)[lc]{directed edge}}
\put(-2.7,-3){\makebox(0,0)[lc]{descendant \(V\)}}

\put(1.2,-1.5){\makebox(0,0)[lc]{branch \(\mathcal{B}(1)\)}}
\put(3.5,-2){\vector(0,1){1}}
\put(3.7,-2){\makebox(0,0)[lc]{depth \(2\)}}
\put(3.5,-2){\vector(0,-1){1}}
\put(2,-3.2){\makebox(0,0)[ct]{three siblings}}

\put(-0.7,-3.5){\makebox(0,0)[rc]{bifurcation:}}
\put(-1.2,-4){\makebox(0,0)[rc]{\(\mathcal{B}_1(3)\), regular}}
\put(1.2,-4){\makebox(0,0)[lc]{edge of depth \(2\)}}
\put(2.2,-4.7){\makebox(0,0)[lc]{\(\mathcal{B}_2(3)\), irregular}}

\put(-0.2,-5){\makebox(0,0)[rc]{periodic root \(P\)}}
\put(2.2,-5.5){\makebox(0,0)[lc]{\(\mathcal{B}(5)\)}}
\put(2.2,-6.5){\makebox(0,0)[lc]{\(\mathcal{B}(6)\)}}
\put(2.2,-7.5){\makebox(0,0)[lc]{\(\mathcal{B}(7)\)}}
\put(2.2,-8.5){\makebox(0,0)[lc]{\(\mathcal{B}(8)\)}}
\put(4.5,-7){\vector(0,1){1}}
\put(4.7,-7){\makebox(0,0)[lc]{period length \(2\)}}
\put(4.5,-7){\vector(0,-1){1}}

\put(0,-9){\vector(0,-1){2}}
\put(0.2,-10){\makebox(0,0)[lc]{infinite}}
\put(0.2,-10.5){\makebox(0,0)[lc]{mainline}}
\put(-0.2,-10.5){\makebox(0,0)[rc]{tree \(\mathcal{T}(R)\)}}

\end{picture}

\end{figure}
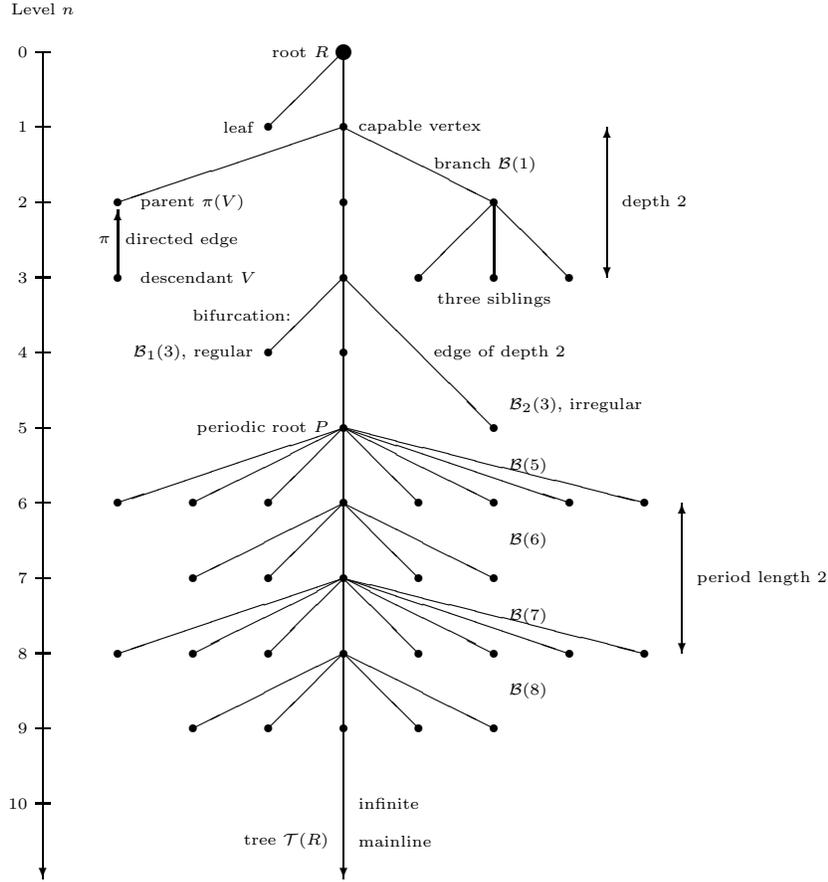

}

If the descendant tree is a coclass tree \(\mathcal{T}(R)\) with root \(R=R_0\)
and with mainline vertices \((R_n)_{n\ge 0}\) labelled according to the level \(n\),
then the finite subtree defined as the difference set

\begin{equation}
\label{eqn:Branch}
\mathcal{B}(n):=\mathcal{T}(R_n)\setminus\mathcal{T}(R_{n+1})
\end{equation}

\noindent
is called the \(n\)th \textit{branch} (or twig) of the tree
or also the branch \(\mathcal{B}(R_n)\) with root \(R_n\), for any \(n\ge 0\).
The \textit{depth} of a branch is
the maximal length of the paths connecting its vertices with its root.

Figure
\ref{fig:TreeNotation}
shows a descendant tree whose branches \(\mathcal{B}(2),\mathcal{B}(4)\) both have depth \(0\),
and \(\mathcal{B}(5)\simeq\mathcal{B}(7)\), resp. \(\mathcal{B}(6)\simeq\mathcal{B}(8)\), are isomorphic as trees.

If all vertices of depth bigger than a given integer \(k\ge 0\)
are removed from branch \(\mathcal{B}(n)\),
then we obtain the (depth-)\textit{pruned branch} \(\mathcal{B}_k(n)\).
Correspondingly, the \textit{pruned coclass tree} \(\mathcal{T}_k(R)\),
resp. the entire coclass tree \(\mathcal{T}(R)\),
consists of the infinite sequence of its pruned branches \((\mathcal{B}_k(n))_{n\ge 0}\),
resp. branches \((\mathcal{B}(n))_{n\ge 0}\),
connected by the mainline, whose vertices \(R_n\) are called \textit{infinitely capable}.



\section{Virtual periodicity}
\label{s:VirtualPeriodicity}

The periodicity of branches of depth-pruned coclass trees
has been proved with analytic methods using zeta functions
\cite[\S\ 7, Thm.15, p.280]{dSSg}
of groups by M. du Sautoy
\cite[Thm.1.11, p.68, and Thm.8.3, p.103]{dS},
and with algebraic techniques using cohomology groups by B. Eick and C.R. Leedham-Green
\cite{EkLg}.
The former methods admit the qualitative insight of ultimate virtual periodicity,
the latter techniques determine the quantitative structure.



\begin{theorem}
\label{thm:Periodicity}
For any infinite pro-\(p\) group \(S\) of coclass \(r\ge 1\) and dimension \(d\),
and for any given depth \(k\ge 1\),
there exists an effective minimal lower bound \(f(k)\ge 1\),
where \textit{periodicity of length} \(d\) of depth-\(k\) pruned branches
of the coclass tree \(\mathcal{T}(S)\) sets in,
that is, there exist graph isomorphisms

\begin{equation}
\label{eqn:Periodicity}
\mathcal{B}_k(n+d)\simeq\mathcal{B}_k(n),\text{ for all }n\ge f(k).
\end{equation}

\end{theorem}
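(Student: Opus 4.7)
The plan is to exploit the \(p\)-adic pre-space group structure of \(S\), whose defining short exact sequence \(1\to T\to S\to P\to 1\) exhibits \(S\) as a translation module \(T\cong\mathbb{Z}_p^d\) extended by a finite point group \(P\) acting uniserially on \(T\). First I would observe that, for \(j\) sufficiently large, the lower central terms \(\gamma_j(S)\) coincide, up to a fixed index shift, with the canonical \(P\)-submodule chain \(T=T_0\supsetneq T_1\supsetneq T_2\supsetneq\cdots\) of \(T\); hence the mainline quotients are essentially \(R_n=S/T_{n_0+n}\) for a suitable starting index \(n_0\), and the filtration by the \(T_j\) provides the only degree of freedom from which the branches can possibly grow.

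The module-theoretic heart of the argument lies in the periodicity built into the dimension formula \(d=(p-1)p^s\) of equation (\ref{eqn:Dimension}): once \(i\) has passed a threshold after which uniseriality has reached its asymptotic form, multiplication by \(p\) on \(T\) acts as a shift by \(d\) on the filtration, i.e.\ \(pT_i=T_{i+d}\). This yields canonical isomorphisms of \(P\)-modules \(T_i/T_{i+m}\simeq T_{i+d}/T_{i+d+m}\) for every fixed \(m\) as soon as \(i\) is large enough. Lifting these section isomorphisms to the level of group extensions via inflation on \(H^2(P,-)\) produces a compatible isomorphism between the period-shifted truncations of \(S\) themselves.

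The branch structure is then recovered from the \(p\)-group generation algorithm. Each vertex of \(\mathcal{B}_k(n)\) arises from \(R_n\) by at most \(k\) iterated immediate-descendant steps, every such step being classified by the action of the extended automorphism group of the current quotient on the admissible quotients of its \(p\)-multiplicator, which is in turn controlled by \(H^2\) with coefficients in the relevant \(\mathbb{F}_p\)-section of \(T\). Because the depth \(k\) bounds the descent to a finite window of filtration around \(T_{n_0+n}\), only finitely many cohomology groups enter the classification, and the shift-by-\(d\) isomorphism transports this window, together with all its cohomological and automorphism-theoretic data, from level \(n\) to level \(n+d\). Assembling these transports inductively in \(k\) produces the required graph isomorphism \(\mathcal{B}_k(n+d)\simeq\mathcal{B}_k(n)\).

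The main obstacle, and the step where I expect the real work to lie, is making the threshold \(f(k)\) effective. This amounts to pinning down the smallest \(n\) at which every cohomology group \(H^2(R_n,T_j/T_{j+1})\), for indices \(j\) inside the depth-\(k\) window, together with all its compatibility maps under the shift, has entered the periodic regime. Du Sautoy's zeta-function approach yields only qualitative ultimate periodicity, while the cohomological comparison of Eick and Leedham-Green gives an explicit bound by tracking the stabilisation of the relevant Tate cohomology of the \(\mathbb{Z}_p[P]\)-module \(T\); implementing either route rigorously, and verifying that the elementary abelian layers on which \(P\) eventually acts cyclically of period \(d\) force \(f(k)\) to grow only moderately in \(k\), would constitute the bulk of the proof.
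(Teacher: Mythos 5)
Your outline follows essentially the same cohomological route that the paper itself relies on: the paper gives no independent argument for this theorem, its proof consisting of a citation of the branch-isomorphism results of Eick and Leedham-Green \cite{EkLg} (Thms.~6 and 9) together with their effective bound on the branch root orders (Thm.~29), with du Sautoy \cite{dS} supplying the qualitative (ultimate/virtual) version. Your picture of a depth-\(k\) window of the uniserial filtration being transported by the shift \(T_i\mapsto pT_i=T_{i+d}\), with the comparison carried out in second cohomology, is indeed the skeleton of that argument.

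As a proof, however, the proposal stops exactly where the cited theorems do the work. First, the claim that multiplication by \(p\) eventually shifts the filtration by \(d\) and that the resulting isomorphisms of \(P\)-module sections lift, ``via inflation on \(H^2(P,-)\)'', to compatible isomorphisms of the relevant extensions is asserted rather than proved; having isomorphic cohomology groups is far from sufficient. Second, a graph isomorphism of branches must match immediate descendants up to isomorphism, i.e.\ orbits of allowable subgroups of the \(p\)-multiplicator under the extended automorphism groups (cf.\ \S\S\ \ref{s:Allowable}--\ref{s:Orbits}); you say this automorphism-theoretic data is ``transported'' by the shift, but the required equivariance, uniformly over all vertices of the branch down to depth \(k\), is precisely the crux and is not argued. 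Third, the effective bound \(f(k)\) is part of the statement, and you explicitly defer it; without it one only recovers du Sautoy-type ultimate periodicity. So the proposal is a reasonable plan aligned with the literature the paper cites, but it does not yet constitute a proof of the theorem as stated.
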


\begin{proof}
The graph isomorphisms of depth-\(k\) pruned banches with roots of sufficiently large order
\(n\ge f(k)\)
are derived with cohomological methods in
\cite[Thm.6, p.277, Thm.9, p.278]{EkLg}
and the effective lower bound \(f(k)\) for the branch root orders is established in
\cite[Thm.29, p.287]{EkLg}.
\end{proof}

This central result can be expressed ostensively:
When we look at a coclass tree through a pair of blinkers
and ignore a finite number of pre-periodic branches at the top,
then we shall see a repeating finite pattern (\textit{ultimate} periodicity).
However, if we take wider blinkers
the pre-periodic initial section may become longer (\textit{virtual} periodicity).

The vertex \(P=R_{f(k)}\) is called the \textit{periodic root} of the pruned coclass tree,
for a fixed value of the depth \(k\). See Figure
\ref{fig:TreeNotation}.



\section{Multifurcation and coclass graphs}
\label{s:Multifurcation}

Assume that parents of finite \(p\)-groups are defined as last non-trivial lower central quotients (P2).
For a \(p\)-group \(G\) of coclass \(\mathrm{cc}(G)=r\),
we can distinguish its (entire) descendant tree \(\mathcal{T}(G)\)
and its \textit{coclass-\(r\) descendant tree} \(\mathcal{T}^r(G)\),
the subtree consisting of descendants of coclass \(r\) only.
The group \(G\) is \textit{coclass-settled} if \(\mathcal{T}(G)=\mathcal{T}^r(G)\).

The \textit{nuclear rank} \(\nu(G)\) of \(G\) (see \S\
\ref{s:CoveringGroup})
in the theory of the \(p\)-group generation algorithm by M.F. Newman
\cite{Nm2}
and E.A. O'Brien
\cite{Ob}
provides the following criteria.

\begin{itemize}
\item
\(G\) is terminal, and thus trivially coclass-settled, if and only if \(\nu(G)=0\).
\item
If \(\nu(G)=1\), then \(G\) is capable, but it remains unknown whether \(G\) is coclass-settled.
\item
If \(\nu(G)=m\ge 2\), then \(G\) is capable and certainly not coclass-settled.
\end{itemize}

In the last case, a more precise assertion is possible:
If \(G\) has coclass \(r\) and nuclear rank \(\nu(G)=m\ge 2\), then it gives rise to
an \(m\)-fold \textit{multifurcation}
into a \textit{regular} coclass-\(r\) descendant tree \(\mathcal{T}^r(G)\)
and \(m-1\) \textit{irregular} descendant trees \(\mathcal{T}^{r+j}(G)\) of coclass \(r+j\),
for \(1\le j\le m-1\).
Consequently, the descendant tree of \(G\) is the disjoint union

\begin{equation}
\label{eqn:Components}
\mathcal{T}(G)=\dot{\cup}_{j=0}^{m-1}\,\mathcal{T}^{r+j}(G).
\end{equation}

Multifurcation is correlated with different orders of the last non-trivial lower central of immediate descendants.
Since the nilpotency class increases exactly by a unit,
\(c=\mathrm{cl}(Q)=\mathrm{cl}(P)+1\),
from a parent \(P=\pi(Q)\) to any immediate descendant \(Q\),
the coclass remains stable, \(r=\mathrm{cc}(Q)=\mathrm{cc}(P)\), if \(\vert\gamma_c(Q)\vert=p\).
In this case,
\(Q\) is a \textit{regular} immediate descendant with directed edge \(P\leftarrow Q\) of depth 1,
as usual.
However, the coclass increases by \(m-1\), if \(\vert\gamma_c(Q)\vert=p^m\) with \(m\ge 2\).
Then \(Q\) is called an \textit{irregular} immediate descendant with directed edge of \textit{depth} \(m\).

If the condition of depth (or \textit{step size}) 1 is imposed on all directed edges,
then the maximal descendant tree \(\mathcal{T}(1)\) of the trivial group \(1\)
splits into a countably infinite disjoint union

\begin{equation}
\label{eqn:CoclassGraphs}
\mathcal{T}(1)=\dot{\cup}_{r=0}^\infty\,\mathcal{G}(p,r)
\end{equation}

\noindent
of directed \textit{coclass graphs} \(\mathcal{G}(p,r)\),
which are rather \textit{forests} than trees.
More precisely, the above mentioned Coclass Theorems imply that

\begin{equation}
\label{eqn:SporadicPart}
\mathcal{G}(p,r)=\left(\dot{\cup}_i\,\mathcal{T}(S_i)\right)\dot{\cup}\mathcal{G}_0(p,r)
\end{equation}

\noindent
is the disjoint union of
finitely many coclass trees \(\mathcal{T}(S_i)\)
of pairwise non-isomorphic infinite pro-\(p\) groups \(S_i\) of coclass \(r\) (Theorem D)
and a finite subgraph \(\mathcal{G}_0(p,r)\) of \textit{sporadic groups} lying outside of any coclass tree.



\section{Identifiers}
\label{s:Identifiers}

The SmallGroups Library \textit{identifiers} of finite groups, in particular \(p\)-groups, given in the form
\[\langle\text{order},\text{counting number}\rangle\]
in the following concrete examples of descendant trees,
are due to H.U. Besche, B. Eick and E.A. O'Brien
\cite{BEO1,BEO2}.
When the group orders are given in a scale on the left hand side as in Figure
\ref{fig:2GroupsCoclass1}
and Figure
\ref{fig:3GroupsCoclass1},
the identifiers are briefly denoted by
\[\langle\text{counting number}\rangle.\]

Depending on the prime \(p\), there is an upper bound on the order of groups for which a SmallGroup identifier exists,
e.g. \(512=2^9\) for \(p=2\), and \(2187=3^7\) for \(p=3\).
For groups of bigger orders, a notation with \textit{generalized identifiers}
resembling the descendant structure is employed:
A regular immediate descendant, connected by an edge of depth \(1\) with its parent \(P\),
is denoted by
\[P-\#1;\text{counting number},\]
and
an irregular immediate descendant, connected by an edge of depth \(d\ge 2\) with its parent \(P\),
is denoted by
\[P-\#d;\text{counting number}.\]
The ANUPQ package
\cite{GNO}
containing the implementation of the \(p\)-group generation algorithm
uses this notation, which goes back to J.A. Ascione in 1979
\cite{As1}.



\section{Concrete examples of trees}
\label{s:ConcreteExamples}

In all examples, the underlying parent definition (P2) corresponds to the usual lower central series.
Occasional differences to the parent definition (P3)
with respect to the lower exponent-\(p\) central series are pointed out.



\subsection{Coclass \(0\)}
\label{ss:CoclassZero}

The coclass graph

\begin{equation}
\label{eqn:CoclassZero}
\mathcal{G}(p,0)=\mathcal{G}_0(p,0)
\end{equation}

\noindent
of finite \(p\)-groups of coclass \(0\)
does not contain a coclass tree and
consists of the \textit{trivial group} \(1\) and the \textit{cyclic group} \(C_p\) of order \(p\),
which is a leaf (however, it is capable with respect to the lower exponent-\(p\) central series).
For \(p=2\) the SmallGroup identifier of \(C_p\) is \(\langle 2,1\rangle\),
for \(p=3\) it is \(\langle 3,1\rangle\).



\subsection{Coclass \(1\)}
\label{ss:CoclassOne}

The coclass graph

\begin{equation}
\label{eqn:CoclassOne}
\mathcal{G}(p,1)=\mathcal{T}^1(R)\dot{\cup}\mathcal{G}_0(p,1)
\end{equation}

\noindent
of finite \(p\)-groups of coclass \(1\)
consists of the unique coclass tree with root \(R=C_p\times C_p\),
the \textit{elementary abelian \(p\)-group of rank \(2\)},
and a single \textit{isolated vertex}
(a terminal orphan without proper parent in the same coclass graph,
since the directed edge to the trivial group \(1\) has depth \(2\)),
the \textit{cyclic group} \(C_{p^2}\) of order \(p^2\) in the sporadic part \(\mathcal{G}_0(p,1)\)
(however, this group is capable with respect to the lower exponent-\(p\) central series).
The tree \(\mathcal{T}^1(R)=\mathcal{T}^1(S_1)\) is
the coclass tree of the unique infinite pro-\(p\) group \(S_1\) of coclass \(1\).

For \(p=2\), resp. \(p=3\), the SmallGroup identifier of the root \(R\) is
\(\langle 4,2\rangle\), resp. \(\langle 9,2\rangle\),
and a tree diagram of the coclass graph from branch \(\mathcal{B}(2)\) up to branch \(\mathcal{B}(7)\)
(counted with respect to the \(p\)-logarithm of the order of the branch root)
is drawn in Figure
\ref{fig:2GroupsCoclass1},
resp. Figure
\ref{fig:3GroupsCoclass1},
where all groups of order at least \(p^3\) are \textit{metabelian},
that is non-abelian with derived length \(2\)
(vertices represented by black discs in contrast to contour squares indicating abelian groups).
In Figure
\ref{fig:3GroupsCoclass1},
smaller black discs denote metabelian 3-groups where even the maximal subgroups are non-abelian,
a feature which does not occur for the metabelian 2-groups in Figure
\ref{fig:2GroupsCoclass1},
since they all possess an abelian subgroup of index \(p\) (usually exactly one).
The coclass tree of \(\mathcal{G}(2,1)\), resp. \(\mathcal{G}(3,1)\),
has periodic root \(\langle 8,3\rangle\) and period of length \(1\) starting with branch \(\mathcal{B}(3)\),
resp. periodic root \(\langle 81,9\rangle\) and period of length \(2\) starting with branch \(\mathcal{B}(4)\).
Both trees have branches of bounded depth \(1\),
so their virtual periodicity is in fact a \textit{strict periodicity}.
The \(\Gamma_s\), resp. \(\Phi_s\), denote isoclinism families
\cite{HaSn,Hl}.

However, the coclass tree of \(\mathcal{G}(p,1)\) with \(p\ge 5\)
has \textit{unbounded depth} and contains non-metabelian groups,
and the coclass tree of \(\mathcal{G}(p,1)\) with \(p\ge 7\) has even \textit{unbounded width},
that is the number of descendants of a fixed order increases indefinitely with growing order
\cite{DEF}.

With the aid of kernels and targets of Artin transfer homomorphisms
\cite{Ar2},
the diagrams in Figures 
\ref{fig:2GroupsCoclass1}
and
\ref{fig:3GroupsCoclass1}
can be endowed with additional information
and redrawn as \textit{structured descendant trees}
\cite[Fig.3.1, p.419, and Fig.3.2, p.422]{Ma4}.



{\tiny

\begin{figure}[ht]
\caption{\(2\)-Groups of Coclass \(1\)}
\label{fig:2GroupsCoclass1}


\setlength{\unitlength}{1cm}
\begin{picture}(12,16)(-8,-15)

\put(-8,0.5){\makebox(0,0)[cb]{Order \(2^n\)}}
\put(-8,0){\line(0,-1){12}}
\multiput(-8.1,0)(0,-2){7}{\line(1,0){0.2}}
\put(-8.2,0){\makebox(0,0)[rc]{\(4\)}}
\put(-7.8,0){\makebox(0,0)[lc]{\(2^2\)}}
\put(-8.2,-2){\makebox(0,0)[rc]{\(8\)}}
\put(-7.8,-2){\makebox(0,0)[lc]{\(2^3\)}}
\put(-8.2,-4){\makebox(0,0)[rc]{\(16\)}}
\put(-7.8,-4){\makebox(0,0)[lc]{\(2^4\)}}
\put(-8.2,-6){\makebox(0,0)[rc]{\(32\)}}
\put(-7.8,-6){\makebox(0,0)[lc]{\(2^5\)}}
\put(-8.2,-8){\makebox(0,0)[rc]{\(64\)}}
\put(-7.8,-8){\makebox(0,0)[lc]{\(2^6\)}}
\put(-8.2,-10){\makebox(0,0)[rc]{\(128\)}}
\put(-7.8,-10){\makebox(0,0)[lc]{\(2^7\)}}
\put(-8.2,-12){\makebox(0,0)[rc]{\(256\)}}
\put(-7.8,-12){\makebox(0,0)[lc]{\(2^8\)}}
\put(-8,-12){\vector(0,-1){2}}

\put(-0.1,-0.1){\framebox(0.2,0.2){}}
\put(-2.1,-0.1){\framebox(0.2,0.2){}}
\multiput(0,-2)(0,-2){6}{\circle*{0.2}}
\multiput(-2,-2)(0,-2){6}{\circle*{0.2}}
\multiput(-4,-4)(0,-2){5}{\circle*{0.2}}

\multiput(0,0)(0,-2){6}{\line(0,-1){2}}
\multiput(0,0)(0,-2){6}{\line(-1,-1){2}}
\multiput(0,-2)(0,-2){5}{\line(-2,-1){4}}

\put(0,-12){\vector(0,-1){2}}
\put(-0.2,-13.5){\makebox(0,0)[rc]{infinite}}
\put(-0.2,-14){\makebox(0,0)[rc]{mainline}}
\put(0.2,-14){\makebox(0,0)[lc]{\(\mathcal{T}^1(C_2\times C_2)\)}}

\put(0.2,0){\makebox(0,0)[lb]{\(=C_2\times C_2=V_4\)}}
\put(-2.6,0){\makebox(0,0)[rb]{\(C_4=\)}}
\put(-0.7,0.3){\makebox(0,0)[rb]{abelian}}
\put(-2.6,-2){\makebox(0,0)[rb]{\(Q(8)=\)}}

\put(0.2,-12.2){\makebox(0,0)[lt]{\(D(2^n)\)}}
\put(-2,-12.2){\makebox(0,0)[ct]{\(Q(2^n)\)}}
\put(-4,-12.2){\makebox(0,0)[ct]{\(S(2^n)\)}}
\put(0.2,-12.7){\makebox(0,0)[lt]{dihedral}}
\put(-2,-12.7){\makebox(0,0)[ct]{quaternion}}
\put(-4,-12.7){\makebox(0,0)[ct]{semidihedral}}

\put(3,0){\makebox(0,0)[lb]{\(\Gamma_1\)}}
\put(-2.1,0){\makebox(0,0)[rb]{\(\langle 1\rangle\)}}
\put(-0.1,0){\makebox(0,0)[rb]{\(\langle 2\rangle\)}}

\put(-1.2,-1){\makebox(0,0)[rc]{branch \(\mathcal{B}(2)\)}}
\put(2,-1.7){\makebox(0,0)[lb]{\(\Gamma_2\)}}
\put(-2.1,-2){\makebox(0,0)[rb]{\(\langle 4\rangle\)}}
\put(-0.1,-2){\makebox(0,0)[rb]{\(\langle 3\rangle\)}}

\put(0.5,-3){\vector(0,1){1}}
\put(0.7,-3){\makebox(0,0)[lc]{depth \(1\)}}
\put(0.5,-3){\vector(0,-1){1}}

\put(-3.5,-3){\makebox(0,0)[cc]{\(\mathcal{B}(3)\)}}
\put(2,-3.7){\makebox(0,0)[lb]{\(\Gamma_3\)}}
\put(-4.1,-4){\makebox(0,0)[rb]{\(\langle 8\rangle\)}}
\put(-2.1,-4){\makebox(0,0)[rb]{\(\langle 9\rangle\)}}
\put(-0.1,-4){\makebox(0,0)[rb]{\(\langle 7\rangle\)}}

\put(-4.8,-5){\vector(0,1){1}}
\put(-5,-5){\makebox(0,0)[rc]{period length \(1\)}}
\put(-4.8,-5){\vector(0,-1){1}}

\put(-3.5,-5){\makebox(0,0)[cc]{\(\mathcal{B}(4)\)}}
\put(2,-5.7){\makebox(0,0)[lb]{\(\Gamma_8\)}}
\put(-4.1,-6){\makebox(0,0)[rb]{\(\langle 19\rangle\)}}
\put(-2.1,-6){\makebox(0,0)[rb]{\(\langle 20\rangle\)}}
\put(-0.1,-6){\makebox(0,0)[rb]{\(\langle 18\rangle\)}}

\put(-3.5,-7){\makebox(0,0)[cc]{\(\mathcal{B}(5)\)}}
\put(-4.1,-8){\makebox(0,0)[rb]{\(\langle 53\rangle\)}}
\put(-2.1,-8){\makebox(0,0)[rb]{\(\langle 54\rangle\)}}
\put(-0.1,-8){\makebox(0,0)[rb]{\(\langle 52\rangle\)}}

\put(-3.5,-9){\makebox(0,0)[cc]{\(\mathcal{B}(6)\)}}
\put(-4.1,-10){\makebox(0,0)[rb]{\(\langle 162\rangle\)}}
\put(-2.1,-10){\makebox(0,0)[rb]{\(\langle 163\rangle\)}}
\put(-0.1,-10){\makebox(0,0)[rb]{\(\langle 161\rangle\)}}

\put(-3.5,-11){\makebox(0,0)[cc]{\(\mathcal{B}(7)\)}}
\put(-4.1,-12){\makebox(0,0)[rb]{\(\langle 540\rangle\)}}
\put(-2.1,-12){\makebox(0,0)[rb]{\(\langle 541\rangle\)}}
\put(-0.1,-12){\makebox(0,0)[rb]{\(\langle 539\rangle\)}}

\end{picture}

\end{figure}
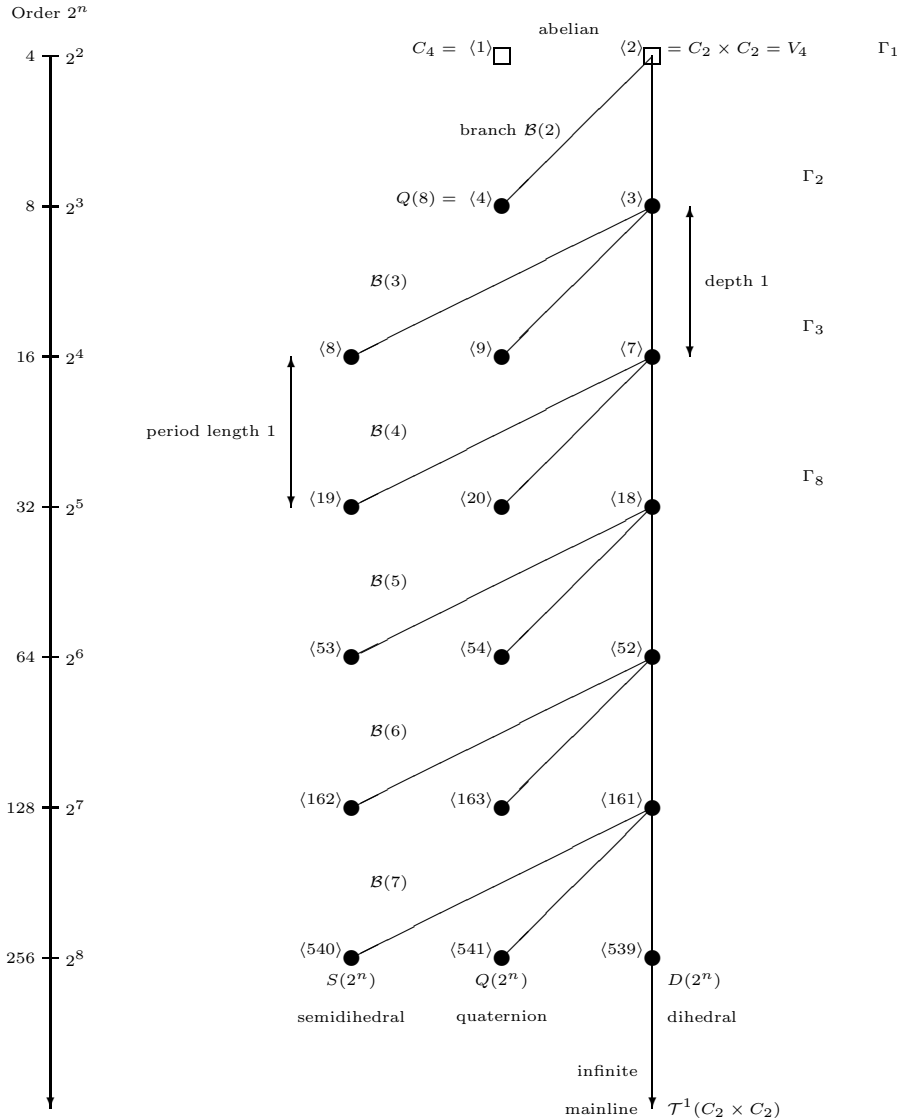

}



The concrete examples \(\mathcal{G}(2,1)\) and \(\mathcal{G}(3,1)\) provide an opportunity
to give a \textit{parametrized polycyclic power-commutator presentation}
\cite[pp.82--84]{Bl}
for the complete coclass tree,
mentioned in \S\
\ref{s:Structure}
as a benefit of the descendant tree concept
and as a consequence of the periodicity of the pruned coclass tree.
In both cases, the group \(G\) is generated by two elements \(x,y\)
but the presentation contains the series of
\textit{higher commutators} \(s_j\), \(2\le j\le n-1=\mathrm{cl}(G)\),
starting with the \textit{main commutator} \(s_2=\lbrack y,x\rbrack\).
The nilpotency is formally expressed by \(s_n=1\),
when the group is of order \(\vert G\vert=p^n\).



{\tiny

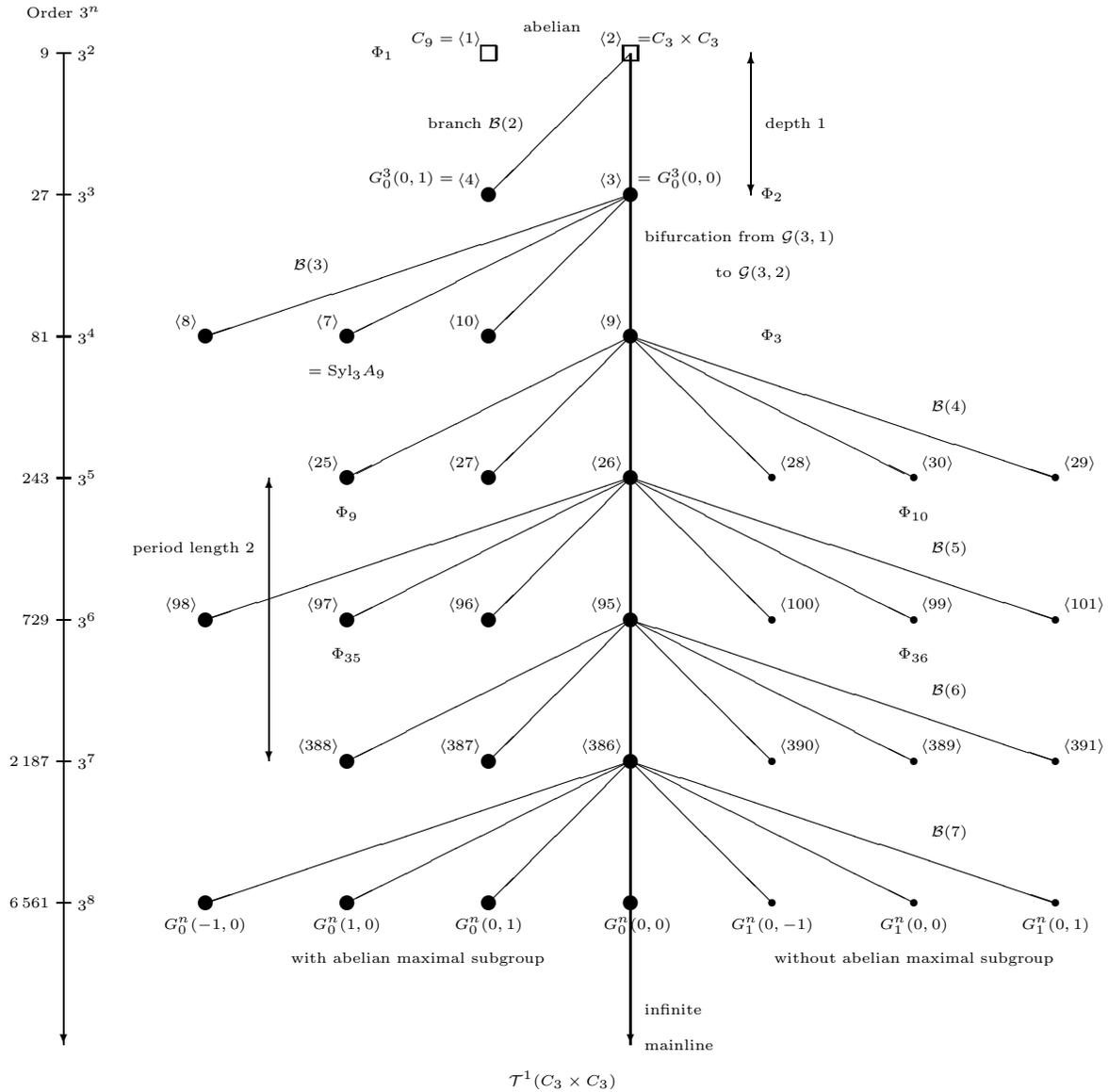
\begin{figure}[hb]
\caption{\(3\)-Groups of Coclass \(1\)}
\label{fig:3GroupsCoclass1}


\setlength{\unitlength}{1cm}
\begin{picture}(16,15)(-8.5,-14)

\put(-8,0.5){\makebox(0,0)[cb]{Order \(3^n\)}}
\put(-8,0){\line(0,-1){12}}
\multiput(-8.1,0)(0,-2){7}{\line(1,0){0.2}}
\put(-8.2,0){\makebox(0,0)[rc]{\(9\)}}
\put(-7.8,0){\makebox(0,0)[lc]{\(3^2\)}}
\put(-8.2,-2){\makebox(0,0)[rc]{\(27\)}}
\put(-7.8,-2){\makebox(0,0)[lc]{\(3^3\)}}
\put(-8.2,-4){\makebox(0,0)[rc]{\(81\)}}
\put(-7.8,-4){\makebox(0,0)[lc]{\(3^4\)}}
\put(-8.2,-6){\makebox(0,0)[rc]{\(243\)}}
\put(-7.8,-6){\makebox(0,0)[lc]{\(3^5\)}}
\put(-8.2,-8){\makebox(0,0)[rc]{\(729\)}}
\put(-7.8,-8){\makebox(0,0)[lc]{\(3^6\)}}
\put(-8.2,-10){\makebox(0,0)[rc]{\(2\,187\)}}
\put(-7.8,-10){\makebox(0,0)[lc]{\(3^7\)}}
\put(-8.2,-12){\makebox(0,0)[rc]{\(6\,561\)}}
\put(-7.8,-12){\makebox(0,0)[lc]{\(3^8\)}}
\put(-8,-12){\vector(0,-1){2}}

\put(-0.1,-0.1){\framebox(0.2,0.2){}}
\put(-2.1,-0.1){\framebox(0.2,0.2){}}
\multiput(0,-2)(0,-2){6}{\circle*{0.2}}
\multiput(-2,-2)(0,-2){6}{\circle*{0.2}}
\multiput(-4,-4)(0,-2){5}{\circle*{0.2}}
\multiput(-6,-4)(0,-4){3}{\circle*{0.2}}
\multiput(2,-6)(0,-2){4}{\circle*{0.1}}
\multiput(4,-6)(0,-2){4}{\circle*{0.1}}
\multiput(6,-6)(0,-2){4}{\circle*{0.1}}

\multiput(0,0)(0,-2){6}{\line(0,-1){2}}
\multiput(0,0)(0,-2){6}{\line(-1,-1){2}}
\multiput(0,-2)(0,-2){5}{\line(-2,-1){4}}
\multiput(0,-2)(0,-4){3}{\line(-3,-1){6}}
\multiput(0,-4)(0,-2){4}{\line(1,-1){2}}
\multiput(0,-4)(0,-2){4}{\line(2,-1){4}}
\multiput(0,-4)(0,-2){4}{\line(3,-1){6}}

\put(0,-12){\vector(0,-1){2}}
\put(0.2,-13.5){\makebox(0,0)[lc]{infinite}}
\put(0.2,-14){\makebox(0,0)[lc]{mainline}}
\put(-0.2,-14.5){\makebox(0,0)[rc]{\(\mathcal{T}^1(C_3\times C_3)\)}}

\put(0.1,0.1){\makebox(0,0)[lb]{=\(C_3\times C_3\)}}
\put(-2.5,0.1){\makebox(0,0)[rb]{\(C_9=\)}}
\put(-0.7,0.3){\makebox(0,0)[rb]{abelian}}
\put(0.1,-1.9){\makebox(0,0)[lb]{\(=G^3_0(0,0)\)}}
\put(-2.5,-1.9){\makebox(0,0)[rb]{\(G^3_0(0,1)=\)}}
\put(-4,-4.5){\makebox(0,0)[cc]{\(=\mathrm{Syl}_3A_9\)}}

\put(0.2,-2.5){\makebox(0,0)[lt]{bifurcation from \(\mathcal{G}(3,1)\)}}
\put(1.2,-3){\makebox(0,0)[lt]{to \(\mathcal{G}(3,2)\)}}

\put(-3.5,0){\makebox(0,0)[cc]{\(\Phi_1\)}}
\put(-2.1,0.1){\makebox(0,0)[rb]{\(\langle 1\rangle\)}}
\put(-0.1,0.1){\makebox(0,0)[rb]{\(\langle 2\rangle\)}}

\put(1.7,-1){\vector(0,1){1}}
\put(1.9,-1){\makebox(0,0)[lc]{depth \(1\)}}
\put(1.7,-1){\vector(0,-1){1}}

\put(-1.5,-1){\makebox(0,0)[rc]{branch \(\mathcal{B}(2)\)}}
\put(2,-2){\makebox(0,0)[cc]{\(\Phi_2\)}}
\put(-2.1,-1.9){\makebox(0,0)[rb]{\(\langle 4\rangle\)}}
\put(-0.1,-1.9){\makebox(0,0)[rb]{\(\langle 3\rangle\)}}

\put(-4.5,-3){\makebox(0,0)[cc]{\(\mathcal{B}(3)\)}}
\put(2,-4){\makebox(0,0)[cc]{\(\Phi_3\)}}
\put(-6.1,-3.9){\makebox(0,0)[rb]{\(\langle 8\rangle\)}}
\put(-4.1,-3.9){\makebox(0,0)[rb]{\(\langle 7\rangle\)}}
\put(-2.1,-3.9){\makebox(0,0)[rb]{\(\langle 10\rangle\)}}
\put(-0.1,-3.9){\makebox(0,0)[rb]{\(\langle 9\rangle\)}}

\put(-4,-6.5){\makebox(0,0)[cc]{\(\Phi_9\)}}
\put(-4.1,-5.9){\makebox(0,0)[rb]{\(\langle 25\rangle\)}}
\put(-2.1,-5.9){\makebox(0,0)[rb]{\(\langle 27\rangle\)}}
\put(-0.1,-5.9){\makebox(0,0)[rb]{\(\langle 26\rangle\)}}

\put(4.5,-5){\makebox(0,0)[cc]{\(\mathcal{B}(4)\)}}
\put(4,-6.5){\makebox(0,0)[cc]{\(\Phi_{10}\)}}
\put(2.1,-5.9){\makebox(0,0)[lb]{\(\langle 28\rangle\)}}
\put(4.1,-5.9){\makebox(0,0)[lb]{\(\langle 30\rangle\)}}
\put(6.1,-5.9){\makebox(0,0)[lb]{\(\langle 29\rangle\)}}

\put(-4,-8.5){\makebox(0,0)[cc]{\(\Phi_{35}\)}}
\put(-6.1,-7.9){\makebox(0,0)[rb]{\(\langle 98\rangle\)}}
\put(-4.1,-7.9){\makebox(0,0)[rb]{\(\langle 97\rangle\)}}
\put(-2.1,-7.9){\makebox(0,0)[rb]{\(\langle 96\rangle\)}}
\put(-0.1,-7.9){\makebox(0,0)[rb]{\(\langle 95\rangle\)}}

\put(4.5,-7){\makebox(0,0)[cc]{\(\mathcal{B}(5)\)}}
\put(4,-8.5){\makebox(0,0)[cc]{\(\Phi_{36}\)}}
\put(2.1,-7.9){\makebox(0,0)[lb]{\(\langle 100\rangle\)}}
\put(4.1,-7.9){\makebox(0,0)[lb]{\(\langle 99\rangle\)}}
\put(6.1,-7.9){\makebox(0,0)[lb]{\(\langle 101\rangle\)}}

\put(-5.1,-8){\vector(0,1){2}}
\put(-5.3,-7){\makebox(0,0)[rc]{period length \(2\)}}
\put(-5.1,-8){\vector(0,-1){2}}

\put(-4.1,-9.9){\makebox(0,0)[rb]{\(\langle 388\rangle\)}}
\put(-2.1,-9.9){\makebox(0,0)[rb]{\(\langle 387\rangle\)}}
\put(-0.1,-9.9){\makebox(0,0)[rb]{\(\langle 386\rangle\)}}

\put(4.5,-9){\makebox(0,0)[cc]{\(\mathcal{B}(6)\)}}
\put(2.1,-9.9){\makebox(0,0)[lb]{\(\langle 390\rangle\)}}
\put(4.1,-9.9){\makebox(0,0)[lb]{\(\langle 389\rangle\)}}
\put(6.1,-9.9){\makebox(0,0)[lb]{\(\langle 391\rangle\)}}

\put(4.5,-11){\makebox(0,0)[cc]{\(\mathcal{B}(7)\)}}

\put(0.1,-12.2){\makebox(0,0)[ct]{\(G^n_0(0,0)\)}}
\put(-2,-12.2){\makebox(0,0)[ct]{\(G^n_0(0,1)\)}}
\put(-4,-12.2){\makebox(0,0)[ct]{\(G^n_0(1,0)\)}}
\put(-6,-12.2){\makebox(0,0)[ct]{\(G^n_0(-1,0)\)}}
\put(2,-12.2){\makebox(0,0)[ct]{\(G^n_1(0,-1)\)}}
\put(4,-12.2){\makebox(0,0)[ct]{\(G^n_1(0,0)\)}}
\put(6,-12.2){\makebox(0,0)[ct]{\(G^n_1(0,1)\)}}

\put(-3,-12.7){\makebox(0,0)[ct]{with abelian maximal subgroup}}
\put(4,-12.7){\makebox(0,0)[ct]{without abelian maximal subgroup}}

\end{picture}

\end{figure}

}



For \(p=2\), there are two parameters \(0\le w,z\le 1\) and the pc-presentation
is given by

\begin{equation}
\label{eqn:ParamPres2Cocl1}
\begin{aligned}
G^n(z,w)= & \langle x,y,s_2,\ldots,s_{n-1}\mid\\
& x^2=s_{n-1}^w,\ y^2=s_2^{-1}s_{n-1}^z,\ \lbrack s_2,y\rbrack=1,\\
& s_2=\lbrack y,x\rbrack,\ s_j=\lbrack s_{j-1},x\rbrack\text{ for }3\le j\le n-1\rangle
\end{aligned}
\end{equation}

The 2-groups of maximal class, that is of coclass \(1\),
form three \textit{periodic infinite sequences},

\begin{itemize}
\item
the \textit{dihedral} groups, \(D(2^n)=G^n(0,0)\), \(n\ge 3\),
forming the mainline (with infinitely capable vertices),
\item
the \textit{generalized quaternion} groups, \(Q(2^n)=G^n(0,1)\), \(n\ge 3\),
which are all terminal vertices,
\item
the \textit{semidihedral} groups, \(S(2^n)=G^n(1,0)\), \(n\ge 4\),
which are also leaves.
\end{itemize}

For \(p=3\), there are three parameters \(0\le a\le 1\) and \(-1\le w,z\le 1\)
and the pc-presentation is given by

\begin{equation}
\label{eqn:ParamPres3Cocl1}
\begin{aligned}
G^n_a(z,w)= & \langle x,y,s_2,\ldots,s_{n-1}\mid\\
& x^3=s_{n-1}^w,\ y^3=s_2^{-3}s_3^{-1}s_{n-1}^z,\ \lbrack y,s_2\rbrack=s_{n-1}^a,\\
& s_2=\lbrack y,x\rbrack,\ s_j=\lbrack s_{j-1},x\rbrack\text{ for }3\le j\le n-1\rangle
\end{aligned}
\end{equation}

\(3\)-groups with parameter \(a=0\) possess an abelian maximal subgroup,
those with parameter \(a=1\) do not.
More precisely, an existing abelian maximal subgroup is unique,
except for the two \textit{extra special} groups \(G^3_0(0,0)\) and \(G^3_0(0,1)\),
where all four maximal subgroups are abelian.

In contrast to any bigger coclass \(r\ge 2\),
the coclass graph \(\mathcal{G}(p,1)\) exclusively contains
\(p\)-groups \(G\) with abelianization \(G/G^\prime\) of type \((p,p)\),
except for its unique isolated vertex.
The case \(p=2\) is distinguished by the truth of the reverse statement:
Any \(2\)-group with abelianization of type \((2,2)\) is of coclass \(1\)
(O. Taussky's Theorem
\cite[p.83]{Ta}).



{\tiny

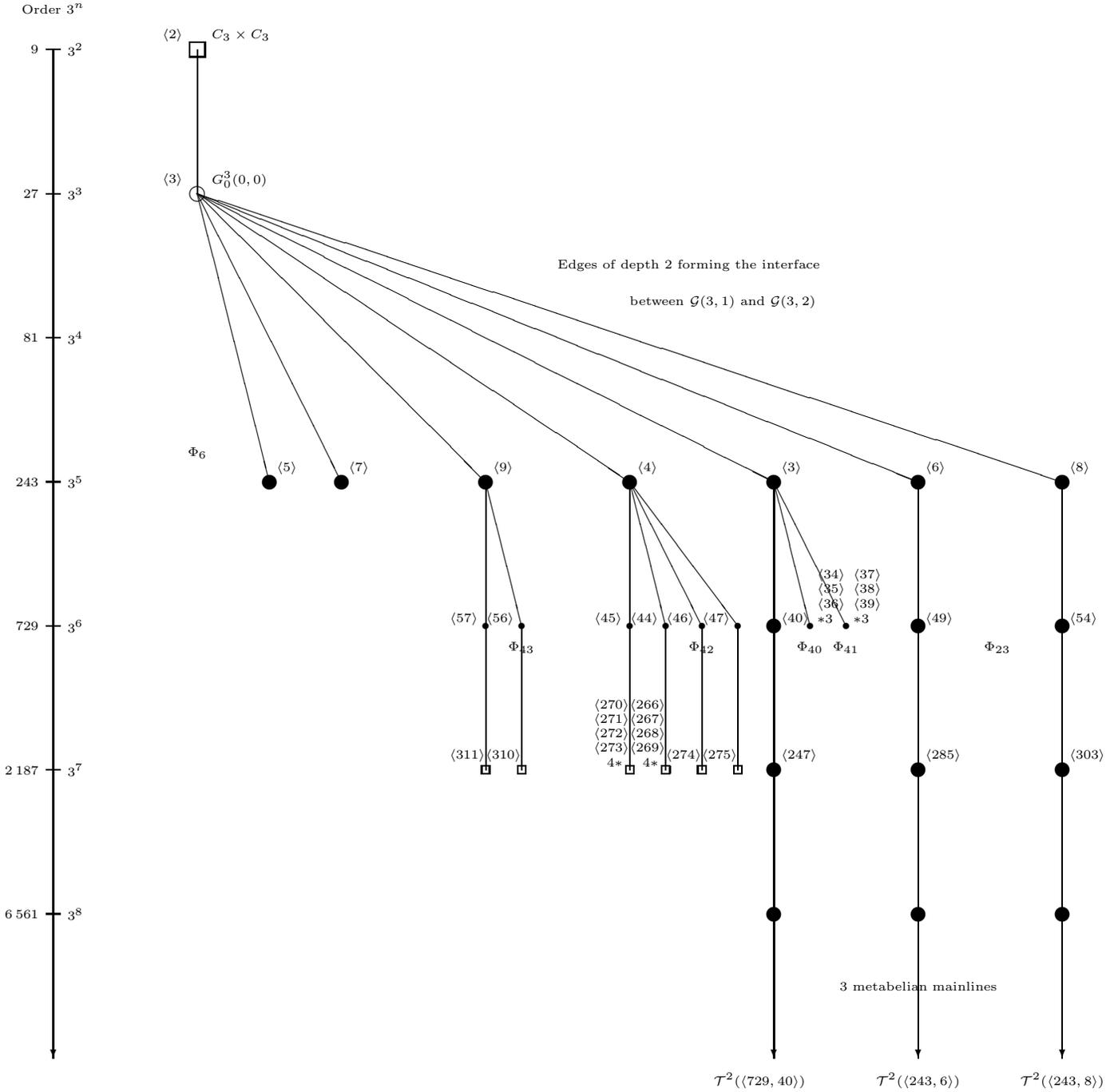
\begin{figure}[ht]
\caption{\(3\)-Groups of Coclass \(2\) with Abelianization \((3,3)\)}
\label{fig:3GrpTyp33Cocl2}


\setlength{\unitlength}{1.2cm}
\begin{picture}(16,15)(1,-12)

\put(0,2.5){\makebox(0,0)[cb]{Order \(3^n\)}}
\put(0,2){\line(0,-1){12}}
\multiput(-0.1,2)(0,-2){7}{\line(1,0){0.2}}
\put(-0.2,2){\makebox(0,0)[rc]{\(9\)}}
\put(0.2,2){\makebox(0,0)[lc]{\(3^2\)}}
\put(-0.2,0){\makebox(0,0)[rc]{\(27\)}}
\put(0.2,0){\makebox(0,0)[lc]{\(3^3\)}}
\put(-0.2,-2){\makebox(0,0)[rc]{\(81\)}}
\put(0.2,-2){\makebox(0,0)[lc]{\(3^4\)}}
\put(-0.2,-4){\makebox(0,0)[rc]{\(243\)}}
\put(0.2,-4){\makebox(0,0)[lc]{\(3^5\)}}
\put(-0.2,-6){\makebox(0,0)[rc]{\(729\)}}
\put(0.2,-6){\makebox(0,0)[lc]{\(3^6\)}}
\put(-0.2,-8){\makebox(0,0)[rc]{\(2\,187\)}}
\put(0.2,-8){\makebox(0,0)[lc]{\(3^7\)}}
\put(-0.2,-10){\makebox(0,0)[rc]{\(6\,561\)}}
\put(0.2,-10){\makebox(0,0)[lc]{\(3^8\)}}
\put(0,-10){\vector(0,-1){2}}

\put(2.2,2.2){\makebox(0,0)[lc]{\(C_3\times C_3\)}}
\put(1.8,2.2){\makebox(0,0)[rc]{\(\langle 2\rangle\)}}
\put(1.9,1.9){\framebox(0.2,0.2){}}
\put(2,2){\line(0,-1){2}}
\put(2,0){\circle{0.2}}
\put(2.2,0.2){\makebox(0,0)[lc]{\(G^3_0(0,0)\)}}
\put(1.8,0.2){\makebox(0,0)[rc]{\(\langle 3\rangle\)}}

\put(2,0){\line(1,-4){1}}
\put(2,0){\line(1,-2){2}}
\put(2,0){\line(1,-1){4}}
\put(2,0){\line(3,-2){6}}
\put(2,0){\line(2,-1){8}}
\put(2,0){\line(5,-2){10}}
\put(2,0){\line(3,-1){12}}
\put(7,-1){\makebox(0,0)[lc]{Edges of depth \(2\) forming the interface}}
\put(8,-1.5){\makebox(0,0)[lc]{between \(\mathcal{G}(3,1)\) and \(\mathcal{G}(3,2)\)}}

\put(2,-3.6){\makebox(0,0)[cc]{\(\Phi_6\)}}
\multiput(3,-4)(1,0){2}{\circle*{0.2}}
\put(3.1,-3.9){\makebox(0,0)[lb]{\(\langle 5\rangle\)}}
\put(4.1,-3.9){\makebox(0,0)[lb]{\(\langle 7\rangle\)}}
\multiput(6,-4)(2,0){5}{\circle*{0.2}}
\put(6.1,-3.9){\makebox(0,0)[lb]{\(\langle 9\rangle\)}}
\put(8.1,-3.9){\makebox(0,0)[lb]{\(\langle 4\rangle\)}}
\put(10.1,-3.9){\makebox(0,0)[lb]{\(\langle 3\rangle\)}}
\put(12.1,-3.9){\makebox(0,0)[lb]{\(\langle 6\rangle\)}}
\put(14.1,-3.9){\makebox(0,0)[lb]{\(\langle 8\rangle\)}}



\put(6.5,-6.3){\makebox(0,0)[cc]{\(\Phi_{43}\)}}
\put(6,-4){\line(0,-1){2}}
\put(5.9,-5.9){\makebox(0,0)[rc]{\(\langle 57\rangle\)}}
\put(6.4,-5.9){\makebox(0,0)[rc]{\(\langle 56\rangle\)}}
\put(6,-7.8){\makebox(0,0)[rc]{\(\langle 311\rangle\)}}
\put(6.5,-7.8){\makebox(0,0)[rc]{\(\langle 310\rangle\)}}
\put(6,-4){\line(1,-4){0.5}}
\multiput(6,-6)(0.5,0){2}{\circle*{0.1}}
\multiput(6,-6)(0.5,0){2}{\line(0,-1){2}}
\multiput(5.95,-8.05)(0.5,0){2}{\framebox(0.1,0.1){}}

\put(9,-6.3){\makebox(0,0)[cc]{\(\Phi_{42}\)}}
\put(8,-4){\line(0,-1){2}}
\put(7.9,-5.9){\makebox(0,0)[rc]{\(\langle 45\rangle\)}}
\put(8.4,-5.9){\makebox(0,0)[rc]{\(\langle 44\rangle\)}}
\put(8.9,-5.9){\makebox(0,0)[rc]{\(\langle 46\rangle\)}}
\put(9.4,-5.9){\makebox(0,0)[rc]{\(\langle 47\rangle\)}}
\put(8,-7.1){\makebox(0,0)[rc]{\(\langle 270\rangle\)}}
\put(8,-7.3){\makebox(0,0)[rc]{\(\langle 271\rangle\)}}
\put(8,-7.5){\makebox(0,0)[rc]{\(\langle 272\rangle\)}}
\put(8,-7.7){\makebox(0,0)[rc]{\(\langle 273\rangle\)}}
\put(8.5,-7.1){\makebox(0,0)[rc]{\(\langle 266\rangle\)}}
\put(8.5,-7.3){\makebox(0,0)[rc]{\(\langle 267\rangle\)}}
\put(8.5,-7.5){\makebox(0,0)[rc]{\(\langle 268\rangle\)}}
\put(8.5,-7.7){\makebox(0,0)[rc]{\(\langle 269\rangle\)}}
\put(9,-7.8){\makebox(0,0)[rc]{\(\langle 274\rangle\)}}
\put(9.5,-7.8){\makebox(0,0)[rc]{\(\langle 275\rangle\)}}
\put(8,-4){\line(1,-4){0.5}}
\put(8,-4){\line(1,-2){1}}
\put(8,-4){\line(3,-4){1.5}}
\multiput(8,-6)(0.5,0){4}{\circle*{0.1}}
\multiput(8,-6)(0.5,0){4}{\line(0,-1){2}}
\multiput(7.95,-8.05)(0.5,0){4}{\framebox(0.1,0.1){}}
\multiput(7.9,-7.9)(0.5,0){2}{\makebox(0,0)[rc]{\(4*\)}}




\put(10.5,-6.3){\makebox(0,0)[cc]{\(\Phi_{40}\)}}
\put(11,-6.3){\makebox(0,0)[cc]{\(\Phi_{41}\)}}
\multiput(10,-4)(0,-2){3}{\line(0,-1){2}}
\multiput(10,-6)(0,-2){3}{\circle*{0.2}}
\put(10.1,-5.9){\makebox(0,0)[lc]{\(\langle 40\rangle\)}}
\put(10.6,-5.3){\makebox(0,0)[lc]{\(\langle 34\rangle\)}}
\put(10.6,-5.5){\makebox(0,0)[lc]{\(\langle 35\rangle\)}}
\put(10.6,-5.7){\makebox(0,0)[lc]{\(\langle 36\rangle\)}}
\put(11.1,-5.3){\makebox(0,0)[lc]{\(\langle 37\rangle\)}}
\put(11.1,-5.5){\makebox(0,0)[lc]{\(\langle 38\rangle\)}}
\put(11.1,-5.7){\makebox(0,0)[lc]{\(\langle 39\rangle\)}}
\put(10.1,-7.8){\makebox(0,0)[lc]{\(\langle 247\rangle\)}}
\put(10,-4){\line(1,-4){0.5}}
\put(10,-4){\line(1,-2){1}}
\multiput(10.5,-6)(0.5,0){2}{\circle*{0.1}}
\multiput(10.6,-5.9)(0.5,0){2}{\makebox(0,0)[lc]{\(*3\)}}
\put(10,-10){\vector(0,-1){2}}
\put(9.8,-12.2){\makebox(0,0)[ct]{\(\mathcal{T}^2(\langle 729,40\rangle)\)}}


\multiput(12,-4)(0,-2){3}{\line(0,-1){2}}
\multiput(12,-6)(0,-2){3}{\circle*{0.2}}
\put(12.1,-5.9){\makebox(0,0)[lc]{\(\langle 49\rangle\)}}
\put(12.1,-7.8){\makebox(0,0)[lc]{\(\langle 285\rangle\)}}
\put(12,-10){\vector(0,-1){2}}
\put(12,-12.2){\makebox(0,0)[ct]{\(\mathcal{T}^2(\langle 243,6\rangle)\)}}

\put(13.1,-6.3){\makebox(0,0)[cc]{\(\Phi_{23}\)}}
\put(12,-11){\makebox(0,0)[cc]{\(3\) metabelian mainlines}}

\multiput(14,-4)(0,-2){3}{\line(0,-1){2}}
\multiput(14,-6)(0,-2){3}{\circle*{0.2}}
\put(14.1,-5.9){\makebox(0,0)[lc]{\(\langle 54\rangle\)}}
\put(14.1,-7.8){\makebox(0,0)[lc]{\(\langle 303\rangle\)}}
\put(14,-10){\vector(0,-1){2}}
\put(14,-12.2){\makebox(0,0)[ct]{\(\mathcal{T}^2(\langle 243,8\rangle)\)}}


\end{picture}

\end{figure}

}



Figure
\ref{fig:3GrpTyp33Cocl2}
shows the interface between finite \(3\)-groups of coclass \(1\) and \(2\) of type \((3,3)\).

\subsection{Coclass \(2\)}
\label{ss:CoclassTwo}

The genesis of the coclass graph \(\mathcal{G}(p,r)\) with \(r\ge 2\) is not uniform.
\(p\)-groups with several distinct abelianizations contribute to its constitution.
For coclass \(r=2\),
there are essential contributions from groups \(G\) with abelianizations
\(G/G^\prime\) of the types \((p,p)\), \((p^2,p)\), \((p,p,p)\),
and an isolated contribution by the cyclic group of order \(p^3\):

\begin{equation}
\label{eqn:Coclass2}
\mathcal{G}(p,2)=\mathcal{G}_{(p,p)}(p,2)\dot{\cup}\mathcal{G}_{(p^2,p)}(p,2)\dot{\cup}\mathcal{G}_{(p,p,p)}(p,2)\dot{\cup}\mathcal{G}_{(p^3)}(p,2).
\end{equation}



\subsubsection{Abelianization of type \((p,p)\)}
\label{sss:TypePP}

As opposed to \(p\)-groups of coclass \(2\) with abelianization of type \((p^2,p)\) or \((p,p,p)\),
which arise as regular descendants of abelian \(p\)-groups of the same types,
\(p\)-groups of coclass \(2\) with abelianization of type \((p,p)\)
arise from irregular descendants of a non-abelian \(p\)-group with coclass \(1\) and nuclear rank \(2\).

For the prime \(p=2\), such groups do not exist at all,
since the dihedral group \(\langle 8,3\rangle\) is coclass-settled,
which is the deeper reason for Taussky's Theorem.
This remarkable fact has been observed by G. Bagnera
\cite[Part 2, \S\ 4, p.182]{Bg}
in 1898 already.

For odd primes \(p\ge 3\),
the existence of \(p\)-groups of coclass \(2\) with abelianization of type \((p,p)\)
is due to the fact that the extra special group \(G^3_0(0,0)\) is not coclass-settled.
Its nuclear rank equals \(2\),
which gives rise to a \textit{bifurcation} of the descendant tree \(\mathcal{T}(G^3_0(0,0))\)
into two coclass graphs.
The regular component \(\mathcal{T}^1(G^3_0(0,0))\)
is a subtree of the unique tree \(\mathcal{T}^1(C_p\times C_p)\) in the coclass graph \(\mathcal{G}(p,1)\).
The irregular component \(\mathcal{T}^2(G^3_0(0,0))\)
becomes a subgraph \(\mathcal{G}=\mathcal{G}_{(p,p)}(p,2)\) of the coclass graph \(\mathcal{G}(p,2)\)
when the connecting edges of depth \(2\) of the irregular immediate descendants of \(G^3_0(0,0)\) are removed.

For \(p=3\), this subgraph \(\mathcal{G}\) is drawn in Figure 4.
It has seven top level vertices of three important kinds, all having order \(243=3^5\),
which have been discovered by G. Bagnera
\cite[Part 2, \S\ 4, pp.182--183]{Bg}.

\begin{itemize}
\item
Firstly, there are two terminal \textit{Schur \(\sigma\)-groups}
\cite{Ag}
\(\langle 243,5\rangle\) and \(\langle 243,7\rangle\)
in the sporadic part \(\mathcal{G}_0(3,2)\) of the coclass graph \(\mathcal{G}(3,2)\).
\item
Secondly, the two groups
\(G=\langle 243,4\rangle\) and \(G=\langle 243,9\rangle\)
are roots of finite trees \(\mathcal{T}^2(G)\) in the sporadic part \(\mathcal{G}_0(3,2)\).
(However, since they are not coclass-settled, the complete trees \(\mathcal{T}(G)\) are infinite.)
\item
And, finally, the three groups
\(\langle 243,3\rangle\), \(\langle 243,6\rangle\) and \(\langle 243,8\rangle\)
give rise to (infinite) coclass trees, e.g.,
\(\mathcal{T}^2(\langle 729,40\rangle)\),
\(\mathcal{T}^2(\langle 243,6\rangle)\),
\(\mathcal{T}^2(\langle 243,8\rangle)\),
each having a metabelian mainline,
in the coclass graph \(\mathcal{G}(3,2)\).
None of these three groups is coclass-settled. See \S\
\ref{s:PeriodicBifurcations}.
\end{itemize}

Displaying additional information on kernels and targets of Artin transfers
\cite{Ar2},
we can draw these trees as \textit{structured descendant trees}
\cite[Fig.3.5, p.439, Fig.3.6, p.442, and Fig.3.7, p.443]{Ma4}.



\begin{definition}
\label{dfn:SchurSigmaGroup}
Generally, a \textit{Schur group}
(called a \textit{closed} group by I. Schur, who coined the concept)
is a pro-\(p\) group \(G\) whose relation rank
\(r(G)=\mathrm{dim}_{\mathbb{F}_p}(\mathrm{H}^2(G,\mathbb{F}_p))\)
coincides with its generator rank
\(d(G)=\mathrm{dim}_{\mathbb{F}_p}(\mathrm{H}^1(G,\mathbb{F}_p))\).
A \textit{\(\sigma\)-group} is a pro-\(p\) group \(G\)
which possesses an automorphism \(\sigma\in\mathrm{Aut}(G)\)
inducing the inversion \(x\mapsto x^{-1}\) on its abelianization \(G/G^\prime\).
A \textit{Schur \(\sigma\)-group}
\cite{Ag,BBH,BuMa,KoVe}
is a Schur group \(G\) which is also a \(\sigma\)-group and has a finite abelianization \(G/G^\prime\).
\end{definition}

It should be pointed out that \(\langle 243,3\rangle\) is not root of a coclass tree,
since its immediate descendant \(\langle 729,40\rangle=B\),
which is root of a coclass tree with metabelian mainline vertices,
has two siblings \(\langle 729,35\rangle=I\), resp. \(\langle 729,34\rangle=H\),
which give rise to a single, resp. three, coclass tree(s)
with non-metabelian mainline vertices having cyclic centres of order \(3\)
and branches of considerable complexity but nevertheless of bounded depth \(5\).



\renewcommand{\arraystretch}{1.0}

\begin{table}[ht]
\caption{Quotients of the Groups \(G=G(f,g,h)\)}
\label{tbl:QuotientsBQUGA}
\begin{center}
\begin{tabular}{|c||c|c|c|c|}
\hline
 parameters  & abelianization & class-\(2\) quotient     & class-\(3\) quotient      & class-\(4\) quotient       \\
 \((f,g,h)\) & \(G/G^\prime\) & \(G/\gamma_3(G)\)        & \(G/\gamma_4(G)\)         &    \(G/\gamma_5(G)\)       \\
\hline
 \((0,1,0)\) & \((3,3)\)      & \(\langle 27,3\rangle\)  & \(\langle 243,3\rangle\)  & \(\langle 729,40\rangle\)  \\
 \((0,1,2)\) & \((3,3)\)      & \(\langle 27,3\rangle\)  & \(\langle 243,6\rangle\)  & \(\langle 729,49\rangle\)  \\
 \((1,1,2)\) & \((3,3)\)      & \(\langle 27,3\rangle\)  & \(\langle 243,8\rangle\)  & \(\langle 729,54\rangle\)  \\
\hline
 \((1,0,0)\) & \((9,3)\)      & \(\langle 81,3\rangle\)  & \(\langle 243,15\rangle\) & \(\langle 729,79\rangle\)  \\
 \((0,0,1)\) & \((9,3)\)      & \(\langle 81,3\rangle\)  & \(\langle 243,17\rangle\) & \(\langle 729,84\rangle\)  \\
\hline
 \((0,0,0)\) & \((3,3,3)\)    & \(\langle 81,12\rangle\) & \(\langle 243,53\rangle\) & \(\langle 729,395\rangle\) \\
\hline
\end{tabular}
\end{center}
\end{table}



\subsubsection{Pro-\(3\) groups of coclass \(2\) with non-trivial centre}
\label{sss:Pro3Cocl2}

B. Eick, C.R. Leedham-Green, M.F. Newman and E.A. O'Brien
\cite[\S\ 4, Thm.4.1]{ELNO}
have constructed a family of infinite pro-3 groups with coclass \(2\)
having a non-trivial centre of order \(3\).
The members are characterized by three parameters \((f,g,h)\).
Their finite quotients generate all mainline vertices with bicyclic centres of type \((3,3)\)
of six coclass trees in the coclass graph \(\mathcal{G}(3,2)\).
The association of parameters to the roots of these six trees is given in Table
\ref{tbl:QuotientsBQUGA},
the tree diagrams are indicated in Figures
\ref{fig:3GrpTyp33Cocl2}
and
\ref{fig:3GrpTyp93Cocl2}, 
 and
the parametrized pro-\(3\) presentation is given by

\begin{equation}
\label{eqn:ParamPres3Cocl2}
\begin{aligned}G(f,g,h)= & \langle a,t,z\mid\\
& a^3=z^f,\ \lbrack t,t^a\rbrack=z^g,\ t^{1+a+a^2}=z^h,\\
& z^3=1,\ \lbrack z,a\rbrack=1,\ \lbrack z,t\rbrack=1\rangle
\end{aligned}
\end{equation}



{\tiny

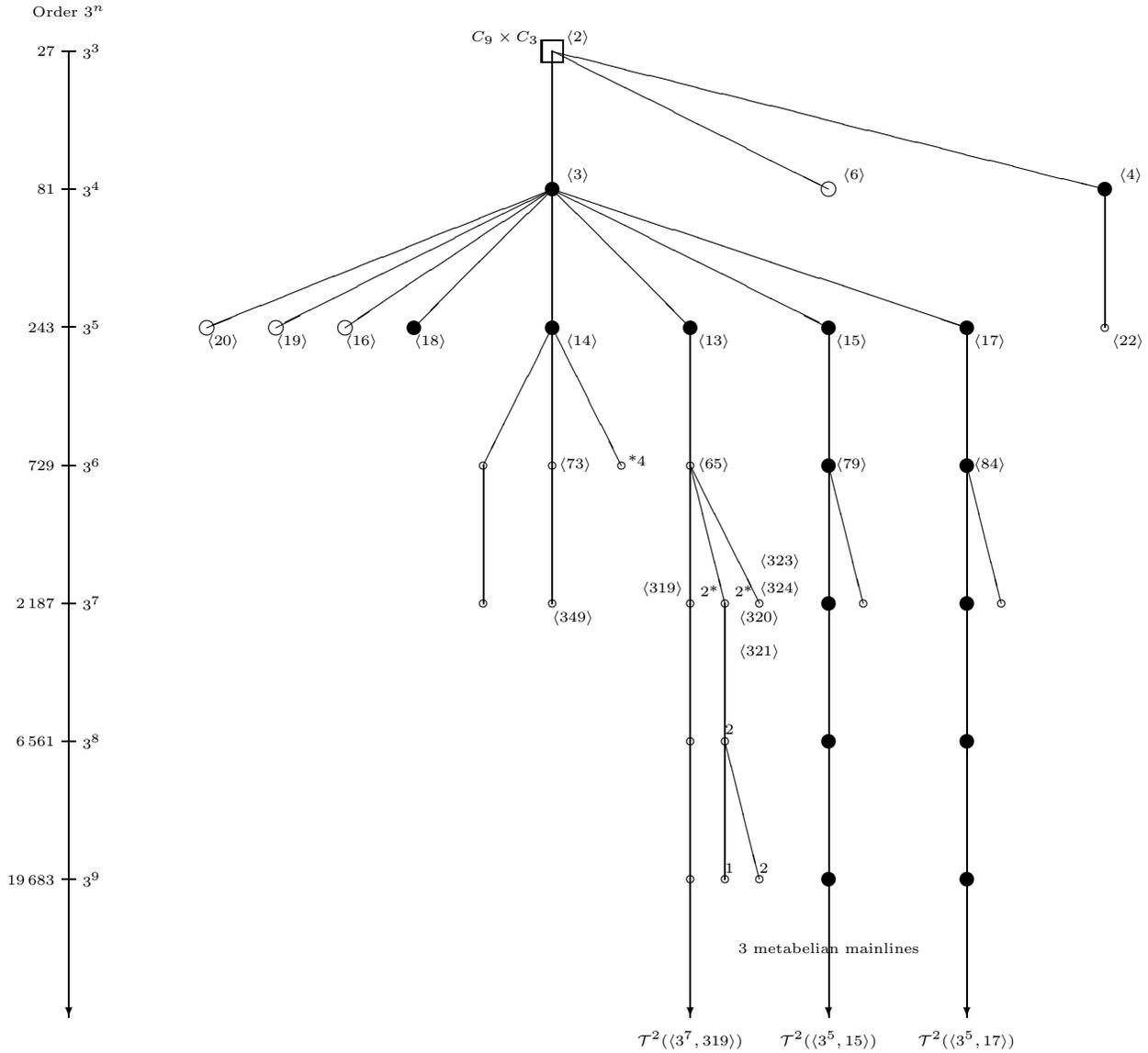
\begin{figure}[ht]
\caption{\(3\)-Groups of Coclass \(2\) with Abelianization \((9,3)\)}
\label{fig:3GrpTyp93Cocl2}


\setlength{\unitlength}{1cm}
\begin{picture}(17,17)(1,-16)

\put(1,0.5){\makebox(0,0)[cb]{Order \(3^n\)}}
\put(1,0){\line(0,-1){12}}
\multiput(0.9,0)(0,-2){7}{\line(1,0){0.2}}
\put(0.8,0){\makebox(0,0)[rc]{\(27\)}}
\put(1.2,0){\makebox(0,0)[lc]{\(3^3\)}}
\put(0.8,-2){\makebox(0,0)[rc]{\(81\)}}
\put(1.2,-2){\makebox(0,0)[lc]{\(3^4\)}}
\put(0.8,-4){\makebox(0,0)[rc]{\(243\)}}
\put(1.2,-4){\makebox(0,0)[lc]{\(3^5\)}}
\put(0.8,-6){\makebox(0,0)[rc]{\(729\)}}
\put(1.2,-6){\makebox(0,0)[lc]{\(3^6\)}}
\put(0.8,-8){\makebox(0,0)[rc]{\(2\,187\)}}
\put(1.2,-8){\makebox(0,0)[lc]{\(3^7\)}}
\put(0.8,-10){\makebox(0,0)[rc]{\(6\,561\)}}
\put(1.2,-10){\makebox(0,0)[lc]{\(3^8\)}}
\put(0.8,-12){\makebox(0,0)[rc]{\(19\,683\)}}
\put(1.2,-12){\makebox(0,0)[lc]{\(3^9\)}}
\put(1,-12){\vector(0,-1){2}}

\put(7.8,0.2){\makebox(0,0)[rc]{\(C_9\times C_3\)}}
\put(8.2,0.2){\makebox(0,0)[lc]{\(\langle 2\rangle\)}}
\put(7.85,-0.15){\framebox(0.3,0.3){}}
\put(8,0){\line(0,-1){2}}
\put(8,-2){\circle*{0.2}}
\put(8.2,-1.8){\makebox(0,0)[lc]{\(\langle 3\rangle\)}}
\put(8,0){\line(2,-1){4}}
\put(12,-2){\circle{0.2}}
\put(12.2,-1.8){\makebox(0,0)[lc]{\(\langle 6\rangle\)}}
\put(8,0){\line(4,-1){8}}
\put(16,-2){\circle*{0.2}}
\put(16.2,-1.8){\makebox(0,0)[lc]{\(\langle 4\rangle\)}}
\put(16,-2){\line(0,-1){2}}
\put(16,-4){\circle{0.1}}
\put(16.1,-4.1){\makebox(0,0)[lt]{\(\langle 22\rangle\)}}

\put(8,-2){\line(-5,-2){5}}
\put(8,-2){\line(-2,-1){4}}
\put(8,-2){\line(-3,-2){3}}
\put(8,-2){\line(-1,-1){2}}
\put(8,-2){\line(0,-1){2}}
\put(8,-2){\line(1,-1){2}}
\put(8,-2){\line(2,-1){4}}
\put(8,-2){\line(3,-1){6}}

\multiput(3,-4)(1,0){3}{\circle{0.2}}
\put(3,-4.1){\makebox(0,0)[lt]{\(\langle 20\rangle\)}}
\put(4,-4.1){\makebox(0,0)[lt]{\(\langle 19\rangle\)}}
\put(5,-4.1){\makebox(0,0)[lt]{\(\langle 16\rangle\)}}
\multiput(6,-4)(2,0){5}{\circle*{0.2}}
\put(6,-4.1){\makebox(0,0)[lt]{\(\langle 18\rangle\)}}
\put(8.2,-4.1){\makebox(0,0)[lt]{\(\langle 14\rangle\)}}
\put(10.1,-4.1){\makebox(0,0)[lt]{\(\langle 13\rangle\)}}
\put(12.1,-4.1){\makebox(0,0)[lt]{\(\langle 15\rangle\)}}
\put(14.1,-4.1){\makebox(0,0)[lt]{\(\langle 17\rangle\)}}

\put(8,-4){\line(-1,-2){1}}
\put(8,-4){\line(0,-1){2}}
\put(8,-4){\line(1,-2){1}}
\multiput(7,-6)(1,0){3}{\circle{0.1}}
\put(8.1,-6.1){\makebox(0,0)[lb]{\(\langle 73\rangle\)}}
\put(9.1,-6){\makebox(0,0)[lb]{*4}}
\multiput(7,-6)(1,0){2}{\line(0,-1){2}}
\multiput(7,-8)(1,0){2}{\circle{0.1}}
\put(8,-8.1){\makebox(0,0)[lt]{\(\langle 349\rangle\)}}


\multiput(10,-4)(0,-2){4}{\line(0,-1){2}}
\multiput(10,-6)(0,-2){4}{\circle{0.1}}
\put(10,-6){\line(1,-4){0.5}}
\put(10,-6){\line(1,-2){1}}
\multiput(10.5,-8)(0,-2){2}{\line(0,-1){2}}
\put(10.5,-10){\line(1,-4){0.5}}
\multiput(10.5,-8)(0,-2){3}{\circle{0.1}}
\multiput(11,-8)(0,-4){2}{\circle{0.1}}
\put(10.1,-6.1){\makebox(0,0)[lb]{\(\langle 65\rangle\)}}
\put(9.9,-7.9){\makebox(0,0)[rb]{\(\langle 319\rangle\)}}
\put(11.3,-8.1){\makebox(0,0)[rt]{\(\langle 320\rangle\)}}
\put(11.3,-8.6){\makebox(0,0)[rt]{\(\langle 321\rangle\)}}
\put(11,-7.9){\makebox(0,0)[lb]{\(\langle 324\rangle\)}}
\put(11,-7.5){\makebox(0,0)[lb]{\(\langle 323\rangle\)}}
\put(10.4,-7.9){\makebox(0,0)[rb]{2*}}
\put(10.9,-7.9){\makebox(0,0)[rb]{2*}}
\put(10.5,-9.9){\makebox(0,0)[lb]{\(2\)}}
\put(10.5,-11.9){\makebox(0,0)[lb]{\(1\)}}
\put(11,-11.9){\makebox(0,0)[lb]{\(2\)}}
\put(10,-12){\vector(0,-1){2}}
\put(10,-14.2){\makebox(0,0)[ct]{\(\mathcal{T}^2(\langle 3^7,319\rangle)\)}}

\multiput(12,-4)(0,-2){4}{\line(0,-1){2}}
\multiput(12,-6)(0,-2){4}{\circle*{0.2}}
\put(12.1,-6.1){\makebox(0,0)[lb]{\(\langle 79\rangle\)}}
\put(12,-6){\line(1,-4){0.5}}
\put(12.5,-8){\circle{0.1}}
\put(12,-12){\vector(0,-1){2}}
\put(12,-14.2){\makebox(0,0)[ct]{\(\mathcal{T}^2(\langle 3^5,15\rangle)\)}}

\put(12,-13){\makebox(0,0)[cc]{\(3\) metabelian mainlines}}

\multiput(14,-4)(0,-2){4}{\line(0,-1){2}}
\multiput(14,-6)(0,-2){4}{\circle*{0.2}}
\put(14.1,-6.1){\makebox(0,0)[lb]{\(\langle 84\rangle\)}}
\put(14,-6){\line(1,-4){0.5}}
\put(14.5,-8){\circle{0.1}}
\put(14,-12){\vector(0,-1){2}}
\put(14,-14.2){\makebox(0,0)[ct]{\(\mathcal{T}^2(\langle 3^5,17\rangle)\)}}

\end{picture}

\end{figure}

}

Figure
\ref{fig:3GrpTyp93Cocl2}
shows some finite \(3\)-groups with coclass \(2\) and type \((9,3)\).



\subsubsection{Abelianization of type \((p^2,p)\)}
\label{sss:TypeP2P}

For \(p=3\), the top levels of the subtree \(\mathcal{T}^2(\langle 27,2\rangle)\)
of the coclass graph \(\mathcal{G}(3,2)\) are drawn in Figure
\ref{fig:3GrpTyp93Cocl2}.
The most important vertices of this tree are
the eight siblings sharing the common parent \(\langle 81,3\rangle\),
which are of three important kinds.

\begin{itemize}
\item
Firstly, there are three leaves
\(\langle 243,20\rangle\),
\(\langle 243,19\rangle\),
\(\langle 243,16\rangle\)
having cyclic centre of order \(9\),
and a single leaf \(\langle 243,18\rangle\) with bicyclic centre of type \((3,3)\). 
\item
Secondly, the group \(G=\langle 243,14\rangle\)
is root of a finite tree \(\mathcal{T}(G)=\mathcal{T}^2(G)\).
\item
And, finally, the three groups
\(\langle 243,13\rangle\),
\(\langle 243,15\rangle\) and
\(\langle 243,17\rangle\)
give rise to infinite coclass trees, e. g.,
\(\mathcal{T}^2(\langle 2187,319\rangle)\),
\(\mathcal{T}^2(\langle 243,15\rangle)\),
\(\mathcal{T}^2(\langle 243,17\rangle)\),
each having a metabelian mainline,
the first with cyclic centres of order \(3\),
the second and third with bicyclic centres of type \((3,3)\).
\end{itemize}

Here, it should be emphasized that \(\langle 243,13\rangle\) is not root of a coclass tree,
since aside from its descendant \(\langle 2187,319\rangle\),
which is root of a coclass tree with metabelian mainline vertices,
it possesses five further descendants
which give rise to coclass trees with non-metabelian mainline vertices having cyclic centres of order \(3\)
and branches of considerable complexity,
here partially even with \textit{unbounded depth}
\cite[Thm.4.2(a--b)]{ELNO}.



{\tiny

\begin{figure}[ht]
\caption{\(2\)-Groups of Coclass \(3\) with Abelianization \((2,2,2)\)}
\label{fig:2GrpTyp222Cocl3}


\setlength{\unitlength}{1cm}
\begin{picture}(11,15)(-8,-14)

\put(-8,0.5){\makebox(0,0)[cb]{Order \(2^n\)}}
\put(-8,0){\line(0,-1){12}}
\put(-8,-12){\vector(0,-1){2}}
\multiput(-8.1,0)(0,-2){7}{\line(1,0){0.2}}
\put(-8.2,0){\makebox(0,0)[rc]{\(8\)}}
\put(-7.8,0){\makebox(0,0)[lc]{\(2^3\)}}
\put(-8.2,-2){\makebox(0,0)[rc]{\(16\)}}
\put(-7.8,-2){\makebox(0,0)[lc]{\(2^4\)}}
\put(-8.2,-4){\makebox(0,0)[rc]{\(32\)}}
\put(-7.8,-4){\makebox(0,0)[lc]{\(2^5\)}}
\put(-8.2,-6){\makebox(0,0)[rc]{\(64\)}}
\put(-7.8,-6){\makebox(0,0)[lc]{\(2^6\)}}
\put(-8.2,-8){\makebox(0,0)[rc]{\(128\)}}
\put(-7.8,-8){\makebox(0,0)[lc]{\(2^7\)}}
\put(-8.2,-10){\makebox(0,0)[rc]{\(256\)}}
\put(-7.8,-10){\makebox(0,0)[lc]{\(2^8\)}}
\put(-8.2,-12){\makebox(0,0)[rc]{\(512\)}}
\put(-7.8,-12){\makebox(0,0)[lc]{\(2^9\)}}

\put(-5,0){\line(1,-4){1}}
\put(-5,0){\line(1,-2){2}}
\put(-5,0){\line(3,-4){3}}
\put(-5,0){\line(1,-1){4}}
\put(-5,0){\line(5,-4){5}}
\put(-5,0){\line(3,-2){6}}
\put(1.2,-1.8){\makebox(0,0)[cc]{Edges of depth \(2\) forming the interface}}
\put(1.2,-2.3){\makebox(0,0)[cc]{between \(\mathcal{G}(2,2)\) and \(\mathcal{G}(2,3)\)}}

\put(0,-4){\line(-1,-4){1}}
\put(0,-4){\line(-1,-2){2}}
\put(-4.7,-6.7){\makebox(0,0)[cc]{Edges of depth \(2\) forming the interface}}
\put(-4.7,-7.2){\makebox(0,0)[cc]{between \(\mathcal{G}(2,3)\) and \(\mathcal{G}(2,4)\)}}

\put(-5,-2){\vector(0,-1){2}}
\put(-5,-3.5){\makebox(0,0)[rc]{main}}
\put(-5,-3.5){\makebox(0,0)[lc]{line}}
\put(-5.2,-4.3){\makebox(0,0)[rc]{\(\mathcal{T}^2(\langle 16,11\rangle)\)}}

\put(-5.1,-0.1){\framebox(0.2,0.2){}}

\put(-5,-2){\circle*{0.2}}
\put(-6,-2){\circle*{0.2}}

\put(-7,-2){\circle{0.2}}

\put(-5,0){\line(0,-1){2}}
\put(-5,0){\line(-1,-2){1}}
\put(-5,0){\line(-1,-1){2}}

\put(-4.9,0.3){\makebox(0,0)[lc]{\(\langle 5\rangle\simeq C_2\times C_2\times C_2\)}}

\put(-5.1,-1.8){\makebox(0,0)[rc]{\(\langle 11\rangle\)}}
\put(-6,-2.2){\makebox(0,0)[ct]{\(\langle 12\rangle\)}}
\put(-7,-2.2){\makebox(0,0)[ct]{\(\langle 13\rangle\)}}

\put(-2,-12){\vector(0,-1){2}}
\put(-1.8,-13.8){\makebox(0,0)[lc]{mainline}}
\put(-2.2,-14.3){\makebox(0,0)[rc]{\(\mathcal{T}^4(\langle 128,438\rangle)\)}}

\multiput(-4,-4)(1,0){5}{\circle*{0.2}}

\multiput(-2,-8)(0,-2){3}{\circle*{0.2}}
\multiput(-1,-8)(0,-2){3}{\circle*{0.2}}

\multiput(-2,-8)(0,-2){2}{\line(0,-1){2}}
\multiput(-2,-8)(0,-2){2}{\line(1,-2){1}}

\put(-4.1,-4.2){\makebox(0,0)[rc]{\(\langle 27\rangle\)}}
\put(-3.5,-4.2){\makebox(0,0)[cc]{\(\ldots\)}}
\put(-2.9,-4.2){\makebox(0,0)[lc]{\(\langle 31\rangle\)}}

\put(-2,-4.2){\makebox(0,0)[ct]{\(\langle 32\rangle\)}}
\put(-1,-4.2){\makebox(0,0)[ct]{\(\langle 33\rangle\)}}

\put(-0.1,-4.2){\makebox(0,0)[rc]{\(\langle 34\rangle\)}}

\put(-2.1,-7.8){\makebox(0,0)[rc]{\(\langle 438\rangle\)}}
\put(-1,-8.2){\makebox(0,0)[ct]{\(\langle 439\rangle\)}}

\put(-2.1,-9.8){\makebox(0,0)[rc]{\(\langle 5491\rangle\)}}
\put(-1,-10.2){\makebox(0,0)[ct]{\(\langle 5492\rangle\)}}

\put(-2.1,-11.8){\makebox(0,0)[rc]{\(\langle 58908\rangle\)}}
\put(-1,-12.2){\makebox(0,0)[ct]{\(\langle 58909\rangle\)}}

\put(-4,-4){\vector(0,-1){2}}
\put(-3.9,-4.8){\makebox(0,0)[lc]{five}}
\put(-3.9,-5.3){\makebox(0,0)[lc]{main}}
\put(-3.9,-5.8){\makebox(0,0)[lc]{lines}}
\put(-3,-4){\vector(0,-1){2}}

\put(0,-4){\vector(0,-1){2}}
\put(-0.5,-6.4){\makebox(0,0)[lc]{mainline}}

\put(1,-12){\vector(0,-1){2}}
\put(1.2,-13.8){\makebox(0,0)[lc]{mainline}}
\put(0.8,-14.3){\makebox(0,0)[rc]{\(\mathcal{T}^3(\langle 32,35\rangle)\)}}

\put(1,-4){\line(3,-4){1.5}}
\multiput(1,-4)(0,-2){4}{\line(0,-1){2}}
\multiput(1,-6)(0,-2){3}{\line(1,-2){1}}
\multiput(1,-6)(0,-2){3}{\line(1,-1){2}}

\multiput(1,-4)(0,-2){5}{\circle*{0.2}}
\put(2.5,-6){\circle*{0.2}}
\multiput(2,-8)(0,-2){3}{\circle*{0.2}}
\multiput(3,-8)(0,-2){3}{\circle*{0.2}}

\put(0.9,-4.2){\makebox(0,0)[rc]{\(\langle 35\rangle\)}}

\put(0.9,-5.8){\makebox(0,0)[rc]{\(\langle 181\rangle\)}}
\put(2.5,-6.2){\makebox(0,0)[ct]{\(\langle 180\rangle\)}}

\put(0.9,-7.8){\makebox(0,0)[rc]{\(\langle 984\rangle\)}}
\put(2,-8.2){\makebox(0,0)[ct]{\(\langle 985\rangle\)}}
\put(3.1,-8.2){\makebox(0,0)[ct]{\(\langle 986\rangle\)}}

\put(0.9,-9.8){\makebox(0,0)[rc]{\(\langle 6719\rangle\)}}
\put(2,-10.2){\makebox(0,0)[ct]{\(\langle 6720\rangle\)}}
\put(3.1,-10.2){\makebox(0,0)[ct]{\(\langle 6721\rangle\)}}

\put(0.9,-11.8){\makebox(0,0)[rc]{\(\langle 60891\rangle\)}}
\put(1.9,-12.2){\makebox(0,0)[ct]{\(\langle 60892\rangle\)}}
\put(3.2,-12.2){\makebox(0,0)[ct]{\(\langle 60893\rangle\)}}

\end{picture}

\end{figure}
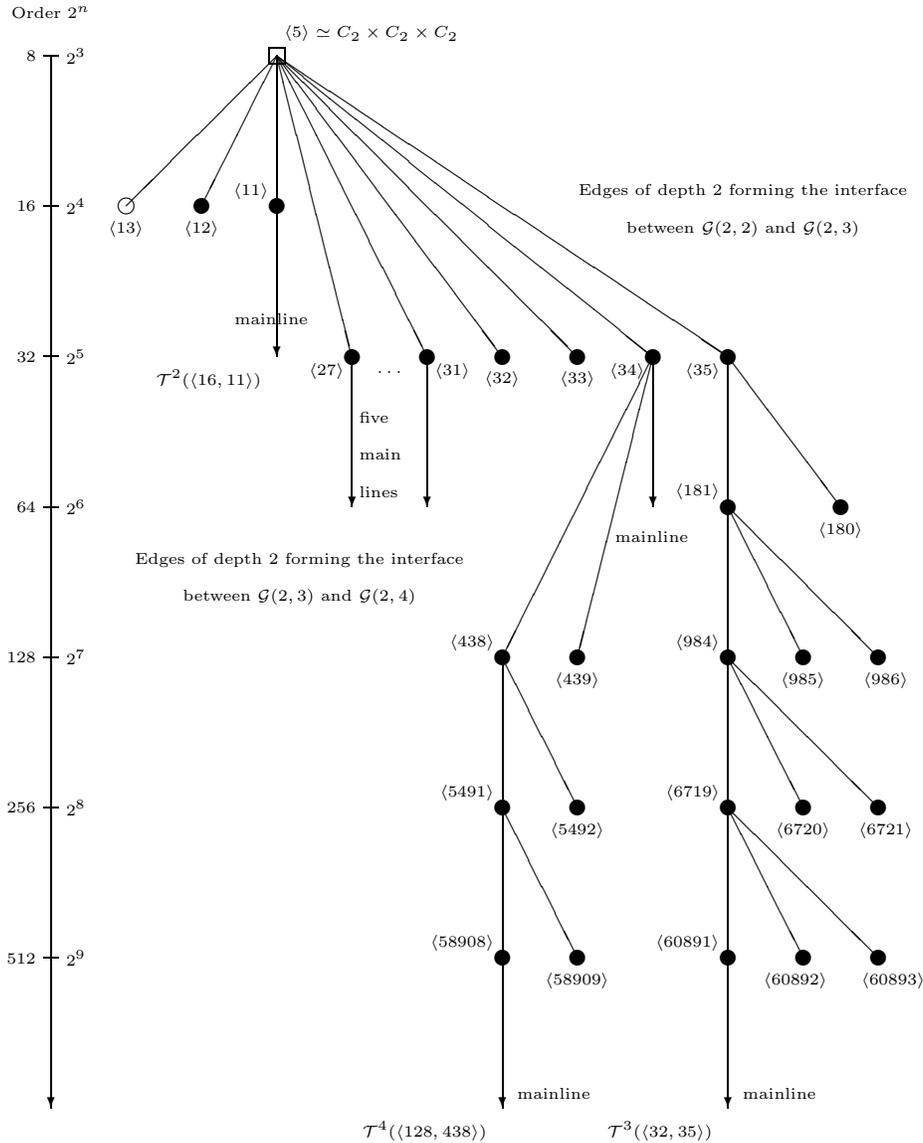

}



\subsubsection{Abelianization of type \((p,p,p)\)}
\label{sss:TypePPP}

For \(p=2\), resp. \(p=3\),
there exists a unique coclass tree with \(p\)-groups of type \((p,p,p)\)
in the coclass graph \(\mathcal{G}(p,2)\).
Its root is the elementary abelian \(p\)-group of type \((p,p,p)\),
that is, \(\langle 8,5\rangle\), resp. \(\langle 27,5\rangle\).
This unique tree corresponds to
the pro-2 group of the family \(\#59\) by M.F. Newman and E.A. O'Brien
\cite[Appendix A, no.59, p.153, Appendix B, Tbl.59, p.165]{NmOb},
resp. the pro-3 group given by the parameters \((f,g,h)=(0,0,0)\) in Table 1.
For \(p=2\), the tree is indicated in Figure 6.

Figure
\ref{fig:2GrpTyp222Cocl3}
shows some finite \(2\)-groups of coclass \(2,3,4\) and type \((2,2,2)\).



\subsection{Coclass \(3\)}
\label{ss:CoclassThree}

Here again, \(p\)-groups with several distinct abelianizations contribute
to the constitution of the coclass graph \(\mathcal{G}(p,3)\) .
There are regular, resp. irregular, essential contributions from groups \(G\)
with abelianizations \(G/G^\prime\) of the types
\((p^3,p)\), \((p^2,p^2)\), \((p^2,p,p)\), \((p,p,p,p)\),
resp. \((p,p)\),  \((p^2,p)\), \((p,p,p)\),
and an isolated contribution by the cyclic group of order \(p^4\).



\subsubsection{Abelianization of type \((p,p,p)\)}
\label{sss:TypePxPxP}

Since the elementary abelian \(p\)-group \(C_p\times C_p\times C_p\) of rank \(3\), that is,
\(\langle 8,5\rangle\), resp. \(\langle 27,5\rangle\), for \(p=2\), resp. \(p=3\),
is not coclass-settled, it gives rise to a multifurcation.
The regular component \(\mathcal{T}^2(C_p\times C_p\times C_p)\)
has been described in the section about coclass \(2\).
The irregular component \(\mathcal{T}^3(C_p\times C_p\times C_p)\)
becomes a subgraph \(\mathcal{G}=\mathcal{G}_{(p,p,p)}(p,3)\)
of the coclass graph \(\mathcal{G}(p,3)\)
when the connecting edges of depth \(2\) of the
irregular immediate descendants of \(C_p\times C_p\times C_p\) are removed.

For \(p=2\), this subgraph \(\mathcal{G}\) is contained in Figure
\ref{fig:2GrpTyp222Cocl3}.
It has nine top level vertices of order \(32=2^5\) which can be divided into terminal and capable vertices:

\begin{itemize}
\item
the groups \(\langle 32,32\rangle\) and \(\langle 32,33\rangle\) are leaves,
\item
the five groups \(\langle 32,27..31\rangle\) and
the two groups \(\langle 32,34..35\rangle\) are infinitely capable.
\end{itemize}

The trees arising from the capable vertices are associated
with infinite pro-\(2\) groups by M.F. Newman and E.A. O'Brien
\cite[Appendix A, no.73--79, pp.154--155, and Appendix B, Tbl.73--79, pp.167--168]{NmOb}
in the following manner:\\
\(\langle 32,28\rangle\) gives rise to\\
\(\mathcal{T}^3(\langle 64,140\rangle)\) associated with family \(\#73\),
and
\(\mathcal{T}^3(\langle 64,147\rangle)\) associated with family \(\#74\),\\
\(\mathcal{T}^3(\langle 32,29\rangle)\) is associated with family \(\#75\),\\
\(\mathcal{T}^3(\langle 32,30\rangle)\) is associated with family \(\#76\),\\
\(\mathcal{T}^3(\langle 32,31\rangle)\) is associated with family \(\#77\),\\
\(\langle 32,34\rangle\) gives rise to\\
\(\mathcal{T}^3(\langle 64,174\rangle)\) associated with family \(\#78\) (see \S\
\ref{s:PeriodicBifurcations}),
and finally\\
\(\mathcal{T}^3(\langle 32,35\rangle)\) is associated with family \(\#79\) (see Figure
\ref{fig:2GrpTyp222Cocl3}).\\
The roots of the coclass trees \(\mathcal{T}^4(\langle 128,438\rangle)\) in Figure
\ref{fig:2GrpTyp222Cocl3}
and \(\mathcal{T}^4(\langle 128,444\vert 445\rangle)\) in Figure
\ref{fig:PeriodicBifurcation222}
are siblings.



\renewcommand{\arraystretch}{1.0}

\begin{table}[ht]
\caption{Class-\(2\) Quotients \(Q\) of certain metabelian \(2\)-Groups \(G\) of Type \((2,2,2)\)}
\label{tbl:Quotients222}
\begin{center}
\begin{tabular}{|c||c|c|c|c|c|}
\hline
 SmallGroups              & Hall Senior             & Schur multiplier   & \(2\)-rank of \(G^\prime\) & \(4\)-rank of \(G^\prime\) & maximum of              \\
 identifier of \(Q\)      & classification of \(Q\) & \(\mathcal{M}(Q)\) & \(r_2(G^\prime)\)          & \(r_4(G^\prime)\)          & \(r_2(H_i/H_i^\prime)\) \\
\hline
 \(\langle 32,32\rangle\) & \(32.040\)              & \((2)\)            & \(2\)                      & \(0\)                      & \(2\)                   \\
 \(\langle 32,33\rangle\) & \(32.041\)              & \((2)\)            & \(2\)                      & \(0\)                      & \(2\)                   \\
\hline
 \(\langle 32,29\rangle\) & \(32.037\)              & \((2,2)\)          & \(2\)                      & \(1\)                      & \(3\)                   \\
 \(\langle 32,30\rangle\) & \(32.038\)              & \((2,2)\)          & \(2\)                      & \(1\)                      & \(3\)                   \\
 \(\langle 32,35\rangle\) & \(32.035\)              & \((2,2)\)          & \(2\)                      & \(1\)                      & \(3\)                   \\
\hline
 \(\langle 32,28\rangle\) & \(32.036\)              & \((2,2,2)\)        & \(2\)                      & \(2\)                      & \(3\)                   \\
 \(\langle 32,27\rangle\) & \(32.033\)              & \((2,2,2,2)\)      & \(3\)                      & \(2\) or \(3\)             & \(4\)                   \\
\hline
\end{tabular}
\end{center}
\end{table}



\subsubsection{Hall-Senior classification}
\label{sss:HallSenior}

Seven of these nine top level vertices have been investigated by E. Benjamin, F. Lemmermeyer and C. Snyder
\cite[\S\ 2, Tbl.1]{BLS}
with respect to their occurrence as class-2 quotients \(Q=G/\gamma_3(G)\)
of bigger metabelian 2-groups \(G\) of type \((2,2,2)\) and with coclass \(3\),
which are exactly the members of the descendant trees of the seven vertices.
These authors use the classification of 2-groups by M. Hall and J.K. Senior
\cite{HaSn}
which is put in correspondence with the SmallGroups Library
\cite{BEO1,BEO2}
in Table
\ref{tbl:Quotients222}.
The complexity of the descendant trees of these seven vertices
increases with the 2-ranks and 4-ranks indicated in Table
\ref{tbl:Quotients222},
where the maximal subgroups of index \(2\) in \(G\) are denoted by \(H_i\), for \(1\le i\le 7\).



\section{History of descendant trees}
\label{s:HistoryDescTrees}

Descendant trees with central quotients as parents (P1) are implicit in P. Hall's 1940 paper
\cite{Hl}
about isoclinism of groups.
Trees with last non-trivial lower central quotients as parents (P2) were first presented by C. R. Leedham-Green
at the International Congress of Mathematicians in Vancouver, 1974
\cite{Nm}.
The first extensive tree diagrams have been drawn manually
by J. A. Ascione, G. Havas and Leedham-Green (1977)
\cite{AHL},
by Ascione (1979)
\cite{As1}
and by B. Nebelung (1989)
\cite{Ne}.
In the former two cases, the parent definition by means of the lower exponent-\(p\) central series (P3)
was adopted in view of computational advantages,
in the latter case, where theoretical aspects were focused,
the parents were taken with respect to the usual lower central series (P2).

The kernels and targets of Artin transfer homomorphisms have recently turned out
to be compatible with parent-descendant relations between finite \(p\)-groups and
can favourably be used to endow descendant trees with additional structure
\cite{Ma4}.



\section{The construction: \(p\)-group generation algorithm}
\label{s:Construction}

The \textit{\(p\)-group generation algorithm} by M.F. Newman
\cite{Nm2}
and E.A. O'Brien
\cite{Ob,HEO}
is a recursive process for constructing the descendant tree
of an assigned finite \(p\)-group which is taken as the root of the tree.
It is discussed in some detail in \S\S\
\ref{s:LowerExponentP}--\ref{s:SchurMpl}.



\section{Lower exponent-\(p\) central series}
\label{s:LowerExponentP}

For a finite \(p\)-group \(G\), the \textit{lower exponent-\(p\) central series}
(briefly lower \(p\)-central series) of \(G\)
is a descending series \((P_j(G))_{j\ge 0}\) of characteristic subgroups of \(G\),
defined recursively by

\begin{equation}
\label{eqn:RecursiveLowerExpCS}
P_0(G):=G\text{ and }P_j(G):=\lbrack P_{j-1}(G),G\rbrack\cdot P_{j-1}(G)^p,\text{ for }j\ge 1.
\end{equation}

\noindent
Since any non-trivial finite \(p\)-group \(G>1\) is nilpotent,
there exists an integer \(c\ge 1\) such that \(P_{c-1}(G)>P_c(G)=1\)
and \(\mathrm{cl}_p(G):=c\) is called the \textit{exponent-\(p\) class} (briefly \(p\)-class) of \(G\).
Only the trivial group \(1\) has \(\mathrm{cl}_p(1)=0\).
Generally, for any finite \(p\)-group \(G\),
its \(p\)-class can be defined as
\(\mathrm{cl}_p(G):=\min\lbrace c\ge 0\mid P_c(G)=1\rbrace\).

The complete lower \(p\)-central series of \(G\) is therefore given by

\begin{equation}
\label{eqn:CompleteLowerExpCS}
G=P_0(G)>\Phi(G)=P_1(G)>P_2(G)>\cdots>P_{c-1}(G)>P_c(G)=1,
\end{equation}

\noindent
since \(P_1(G)=\lbrack P_0(G),G\rbrack\cdot P_0(G)^p=\lbrack G,G\rbrack\cdot G^p=\Phi(G)\)
is the \textit{Frattini subgroup} of \(G\).

For the convenience of the reader and for pointing out the shifted numeration, we recall that
the (usual) \textit{lower central series} of \(G\) is also a descending series \((\gamma_j(G))_{j\ge 1}\)
of characteristic subgroups of \(G\),
defined recursively by

\begin{equation}
\label{eqn:RecursiveLowerCS}
\gamma_1(G):=G\text{ and }\gamma_j(G):=\lbrack\gamma_{j-1}(G),G\rbrack,\text{ for }j\ge 2.
\end{equation}

\noindent
As above, for any non-trivial finite \(p\)-group \(G>1\),
there exists an integer \(c\ge 1\) such that \(\gamma_c(G)>\gamma_{c+1}(G)=1\)
and \(\mathrm{cl}(G):=c\) is called the \textit{nilpotency class} of \(G\),
whereas \(c+1\) is called the \textit{index of nilpotency} of \(G\).
Only the trivial group \(1\) has \(\mathrm{cl}(1)=0\).

Thus, the complete lower central series of \(G\) is given by

\begin{equation}
\label{eqn:CompleteLowerCS}
G=\gamma_1(G)>G^{\prime}=\gamma_2(G)>\gamma_3(G)>\cdots>\gamma_c(G)>\gamma_{c+1}(G)=1,
\end{equation}

\noindent
since \(\gamma_2(G)=\lbrack\gamma_1(G),G\rbrack=\lbrack G,G\rbrack=G^{\prime}\)
is the \textit{commutator subgroup} or \textit{derived subgroup} of \(G\).

The following \textit{rules} should be remembered for the exponent-\(p\) class:

\noindent
Let \(G\) be a finite \(p\)-group.

\begin{enumerate}[({R}1)]
\item
\(\mathrm{cl}(G)\le\mathrm{cl}_p(G)\),
since the \(\gamma_j(G)\) descend more quickly than the \(P_j(G)\).
\item
\(\vartheta\in\mathrm{Hom}(G,\tilde{G})\), for some group \(\tilde{G}\)
\(\Rightarrow\) \(\vartheta(P_j(G))=P_j(\vartheta(G))\), for any \(j\ge 0\).
\item
For any \(c\ge 0\),
the conditions \(N\unlhd G\) and \(\mathrm{cl}_p(G/N)=c\) imply \(P_c(G)\le N\).
\item
For any \(c\ge 0\),
\(\mathrm{cl}_p(G)=c\) \(\Rightarrow\) \(\mathrm{cl}_p(G/P_k(G))=\min(k,c)\), for all \(k\ge 0\),
in particular, \(\mathrm{cl}_p(G/P_k(G))=k\), for all \(0\le k\le c\).
\end{enumerate}

We point out that every non-trivial finite \(p\)-group \(G>1\) defines a \textit{maximal path}
with respect to the parent definition (P3), consisting of \(c\) edges,

\begin{equation}
\label{eqn:MaxPath}
\begin{aligned}
G=G/1=G/P_c(G)\to\pi(G)=G/P_{c-1}(G)\to\pi^2(G)=G/P_{c-2}(G)\to\cdots\\
\cdots\to\pi^{c-1}(G)=G/P_1(G)\to\pi^c(G)=G/P_0(G)=G/G=1
\end{aligned}
\end{equation}

\noindent
and ending in the trivial group \(\pi^c(G)=1\).
The last but one quotient of the maximal path of \(G\) is the
elementary abelian \(p\)-group \(\pi^{c-1}(G)=G/P_1(G)\simeq C_p^d\) of rank \(d=d(G)\),
where \(d(G)=\dim_{\mathbb{F}_p}(H^1(G,\mathbb{F}_p))\) denotes the generator rank of \(G\).



\section{\(p\)-covering group, \(p\)-multiplicator and nucleus}
\label{s:CoveringGroup}

Let \(G\) be a finite \(p\)-group with \(d\) \textit{generators}.
Our goal is to compile a complete list of pairwise non-isomorphic immediate descendants of \(G\).
It turned out that all immediate descendants can be obtained as quotients of a certain extension \(G^{\ast}\) of \(G\)
which is called the \textit{\(p\)-covering group} of \(G\) and can be constructed in the following manner.

We can certainly find a \textit{presentation} of \(G\) in the form of an exact sequence

\begin{equation}
\label{eqn:FreePresentation}
1\longrightarrow R\longrightarrow F\longrightarrow G\longrightarrow 1,
\end{equation}

\noindent
where \(F\) denotes the free group with \(d\) generators and
\(\vartheta:\ F\longrightarrow G\) is an epimorphism with kernel \(R:=\ker(\vartheta)\).
Then \(R\triangleleft F\) is a normal subgroup of \(F\) consisting of the defining \textit{relations} for \(G\simeq F/R\).
For elements \(r\in R\) and \(f\in F\),
the conjugate \(f^{-1}rf\in R\) and thus also the commutator \(\lbrack r,f\rbrack=r^{-1}f^{-1}rf\in R\) are contained in \(R\).
Consequently, \(R^{\ast}:=\lbrack R,F\rbrack\cdot R^p\) is a characteristic subgroup of \(R\),
and the \textit{\(p\)-multiplicator} \(R/R^{\ast}\) of \(G\) is an elementary abelian \(p\)-group,
since

\begin{equation}
\label{eqn:Multiplicator}
\lbrack R,R\rbrack\cdot R^p\le\lbrack R,F\rbrack\cdot R^p=R^{\ast}\unlhd R.
\end{equation}

\noindent
Now we can define the \(p\)-covering group of \(G\) by

\begin{equation}
\label{eqn:CoveringGroup}
G^{\ast}:=F/R^{\ast},
\end{equation}

\noindent
and the exact sequence

\begin{equation}
\label{eqn:Extension}
1\longrightarrow R/R^{\ast}\longrightarrow F/R^{\ast}\longrightarrow F/R\longrightarrow 1
\end{equation}

\noindent
shows that \(G^{\ast}\) is an extension of \(G\) by the elementary abelian \(p\)-multiplicator.
We call

\begin{equation}
\label{eqn:MultiplicatorRank}
\mu(G):=\dim_{\mathbb{F}_p}(R/R^{\ast})
\end{equation}

\noindent
the \textit{\(p\)-multiplicator rank} of \(G\).

Let us assume now that the assigned finite \(p\)-group \(G=F/R\) is of \(p\)-class \(\mathrm{cl}_p(G)=c\).
Then the  conditions \(R\unlhd F\) and \(\mathrm{cl}_p(F/R)=c\) imply \(P_c(F)\le R\),
according to the rule (R3),
and we can define the \textit{nucleus} of \(G\) by

\begin{equation}
\label{eqn:Nucleus}
P_c(G^{\ast})=P_c(F)\cdot R^{\ast}/R^{\ast}\le R/R^{\ast}
\end{equation}

\noindent
as a subgroup of the \(p\)-multiplicator.
Consequently, the \textit{nuclear rank}

\begin{equation}
\label{eqn:NuclearRank}
\nu(G):=\dim_{\mathbb{F}_p}(P_c(G^{\ast}))\le\mu(G)
\end{equation}

\noindent
of \(G\) is bounded from above by the \(p\)-multiplicator rank.



\section{Allowable subgroups of the \(p\)-multiplicator}
\label{s:Allowable}

As before, let \(G\) be a finite \(p\)-group with \(d\) generators.
Any \(p\)-elementary abelian central extension \(1\to Z\to H\to G\to 1\) of \(G\)
by a \(p\)-elementary abelian subgroup \(Z\le\zeta_1(H)\) such that \(d(H)=d(G)=d\)
is a quotient of the \(p\)-covering group \(G^{\ast}\) of \(G\).

The reason is that there exists an epimorphism \(\psi:\ F\to H\) such that \(\vartheta=\omega\circ\psi\),
where \(\omega:\ H\to G=H/Z\) denotes the canonical projection.
Consequently, we have \(R=\ker(\vartheta)=\ker(\omega\circ\psi)=\psi^{-1}(Z)\)
and thus \(\psi(R)=\psi(\psi^{-1}(Z))=Z\).
Further, \(\psi(R^p)\le Z^p=1\), since \(Z\) is \(p\)-elementary,
and \(\psi(\lbrack R,F\rbrack)\le\lbrack Z,Z\rbrack=1\), since \(Z\) is central.
Together this shows that \(\psi(R^{\ast})=\psi(\lbrack R,F\rbrack\cdot R^p)=1\)
and thus \(\psi\) induces the desired epimorphism \(\psi^\ast:\ G^{\ast}\to H\)
such that \(H\simeq G^{\ast}/\ker(\psi^\ast)\).

In particular, an immediate descendant \(H\) of \(G\) is
a \(p\)-elementary abelian central extension 

\begin{equation}
\label{eqn:CentralExtension}
1\to P_{c-1}(H)\to H\to G\to 1
\end{equation}

\noindent
of \(G\), since
\[1=P_c(H)=\lbrack P_{c-1}(H),H\rbrack\cdot P_{c-1}(H)^p\text{ implies }P_{c-1}(H)^p=1\text{ and }P_{c-1}(H)\le\zeta_1(H),\]
where \(c=\mathrm{cl}_p(H)\).

A subgroup \(M/R^{\ast}\le R/R^{\ast}\) of the \(p\)-multiplicator of \(G\) is called \textit{allowable}
if it is given by the kernel \(M/R^{\ast}=\ker(\psi^\ast)\)
of an epimorphism \(\psi^\ast:\ G^{\ast}\to H\)
onto an immediate descendant \(H\) of \(G\).
An equivalent characterization is that \(1<M/R^{\ast}<R/R^{\ast}\) is
a proper subgroup which \textit{supplements the nucleus}

\begin{equation}
\label{eqn:SupplementNucleus}
(M/R^{\ast})\cdot(P_c(F)\cdot R^{\ast}/R^{\ast})=R/R^{\ast}.
\end{equation}

Therefore, the first part of our goal to compile a list of all immediate descendants of \(G\) is done,
when we have constructed all allowable subgroups of \(R/R^{\ast}\)
which supplement the nucleus \(P_c(G^{\ast})=P_c(F)\cdot R^{\ast}/R^{\ast}\),
where \(c=\mathrm{cl}_p(G)\).
However, in general the list

\begin{equation}
\label{eqn:DescList}
\lbrace F/M\quad\mid\quad M/R^{\ast}\le R/R^{\ast}\text{ is allowable }\rbrace,
\end{equation}

\noindent
where \(G^{\ast}/(M/R^{\ast})=(F/R^{\ast})/(M/R^{\ast})\simeq F/M\)
will be redundant,
due to isomorphisms \(F/M_1\simeq F/M_2\) among the immediate descendants.



\section{Orbits under extended automorphisms}
\label{s:Orbits}

Two allowable subgroups \(M_1/R^{\ast}\) and \(M_2/R^{\ast}\) are called \textit{equivalent}
if the quotients \(F/M_1\simeq F/M_2\),
that are the corresponding immediate descendants of \(G\), are isomorphic.

Such an isomorphism \(\varphi:\ F/M_1\to F/M_2\) between immediate descendants of \(G=F/R\)
with \(c=\mathrm{cl}_p(G)\) has the property that
\[\varphi(R/M_1)=\varphi(P_c(F/M_1))=P_c(\varphi(F/M_1))=P_c(F/M_2)=R/M_2\]
and thus induces an automorphism \(\alpha\in\mathrm{Aut}(G)\) of \(G\)
which can be extended to an automorphism \(\alpha^\ast\in\mathrm{Aut}(G^\ast)\)
of the \(p\)-covering group \(G^\ast=F/R^\ast\) of \(G\).
The restriction of this  \textit{extended automorphism} \(\alpha^\ast\)
to the \(p\)-multiplicator \(R/R^\ast\) of \(G\) is determined uniquely by \(\alpha\).
Since
\[\alpha^\ast(M/R^{\ast})\cdot P_c(F/R^{\ast})=\alpha^\ast\lbrack M/R^{\ast}\cdot P_c(F/R^{\ast})\rbrack=\alpha^\ast(R/R^\ast)=R/R^\ast,\]
according to the rule (R2),
each extended automorphism \(\alpha^\ast\in\mathrm{Aut}(G^\ast)\)
induces a \textit{permutation} \(\tilde{\alpha}\) of the allowable subgroups \(M/R^{\ast}\le R/R^{\ast}\).
We define

\begin{equation}
\label{eqn:PermGroup}
P:=\langle\tilde{\alpha}\mid\alpha\in\mathrm{Aut}(G)\rangle
\end{equation}

\noindent
to be the \textit{permutation group} generated by all permutations induced by automorphisms of \(G\).
Then the map \(\mathrm{Aut}(G)\to P\), \(\alpha\mapsto\tilde{\alpha}\) is an epimorphism
and the equivalence classes of allowable subgroups \(M/R^{\ast}\le R/R^{\ast}\)
are precisely the \textit{orbits} of allowable subgroups \textit{under the action of} the permutation group \(P\).

Eventually, our goal to compile a list \(\lbrace F/M_i\mid 1\le i\le N\rbrace\)
of all immediate descendants of \(G\) will be done,
when we select a representative \(M_i/R^{\ast}\)
for each of the \(N\) orbits of allowable subgroups of \(R/R^{\ast}\) under the action of \(P\).
This is precisely what the \textit{\(p\)-group generation algorithm} does
in a single step of the recursive procedure for constructing the descendant tree of an assigned root.



\section{Capable \(p\)-groups and step sizes}
\label{s:StepSizes}

We recall from \S\
\ref{s:TreeDiagram}
that a finite \(p\)-group \(G\) is called \textit{capable} (or \textit{extendable})
if it possesses at least one immediate descendant,
otherwise it is called \textit{terminal} (or a \textit{leaf}).
As mentioned in \S\
\ref{s:Multifurcation}
already, the nuclear rank \(\nu(G)\) of \(G\) admits a decision about the capability of \(G\):

\begin{itemize}
\item
\(G\) is terminal if and only if \(\nu(G)=0\).
\item
\(G\) is capable if and only if \(\nu(G)\ge 1\).
\end{itemize}

In the case of capability, \(G=F/R\) has immediate descendants
of \(\nu=\nu(G)\) different \textit{step sizes} \(1\le s\le\nu\),
in dependence on the index

\begin{equation}
\label{eqn:IndexOfAllowable}
(R/R^\ast:M/R^\ast)=p^s
\end{equation}

\noindent
of the corresponding allowable subgroup \(M/R^\ast\)
in the \(p\)-multiplicator \(R/R^\ast\).
When \(G\) is of order \(\vert G\vert=p^n\),
then an immediate descendant of step size \(s\) is of order
\[\#(F/M)=(F/R^\ast:M/R^\ast)=(F/R^\ast:R/R^\ast)\cdot (R/R^\ast:M/R^\ast)=\vert G\vert\cdot p^s=p^n\cdot p^s=p^{n+s}.\]
For the related phenomenon of \textit{multifurcation} of a descendant tree
at a vertex \(G\) with nuclear rank \(\nu(G)\ge 2\)
see \S\
\ref{s:Multifurcation}
on multifurcation and coclass graphs.

The \(p\)-group generation algorithm provides the flexibility
to restrict the construction of immediate descendants to those of a single fixed step size \(1\le s\le\nu\),
which is very convenient in the case of huge descendant numbers (see the next section).



\section{Numbers of immediate descendants}
\label{s:DescNumbers}

We denote the \textit{number of all immediate descendants},
resp. \textit{immediate descendants of step size \(s\)}, of \(G\)
by \(N\), resp. \(N_s\).
Then we have \(N=\sum_{s=1}^\nu\,N_s\).
As concrete examples, we present some interesting finite metabelian \(p\)-groups
with extensive sets of immediate descendants,
using the SmallGroups identifiers and
additionally pointing out the \textit{numbers \(0\le C_s\le N_s\) of capable immediate descendants}
in the usual format

\begin{equation}
\label{eqn:NumbersOfDesc}
(N_1/C_1;\ldots;N_\nu/C_\nu)
\end{equation}

\noindent
as given by actual implementations of the \(p\)-group generation algorithm
in the computational algebra systems GAP and MAGMA.
These invariants completely determine the local structure of the descendant tree
\(\mathcal{T}(G)\).

First, let \(p=2\).
We begin with groups having abelianization of type \((2,2,2)\).
See Figure
\ref{fig:2GrpTyp222Cocl3}.

\begin{itemize}
\item
The group \(\langle 32,35\rangle\) of coclass \(3\) has ranks \(\nu=1\), \(\mu=5\)
and descendant numbers \((4/1)\), \(N=4\). See \S\
\ref{s:PeriodicBifurcations}.
\item
The group \(\langle 32,34\rangle\) of coclass \(3\) has ranks \(\nu=2\), \(\mu=6\)
and descendant numbers \((6/1;19/6)\), \(N=25\). See \S\
\ref{s:PeriodicBifurcations}.
\item
The group \(\langle 32,27\rangle\) of coclass \(3\) has ranks \(\nu=3\), \(\mu=7\)
and\\
descendant numbers \((12/2;70/25;104/85)\), \(N=186\).
\end{itemize}

Next, let \(p=3\).
We consider groups having abelianization of type \((3,3)\).
See Figure
\ref{fig:3GrpTyp33Cocl2}.

\begin{itemize}
\item
The group \(\langle 27,3\rangle\) of coclass \(1\) has ranks \(\nu=2\), \(\mu=4\)
and descendant numbers \((4/1;7/5)\), \(N=11\).
\item
The group \(\langle 243,3\rangle=\langle 27,3\rangle-\#2;1\) of coclass \(2\) has ranks \(\nu=2\), \(\mu=4\)
and descendant numbers \((10/6;15/15)\), \(N=25\).
\item
One of its immediate descendants,
the group \(B=\langle 729,40\rangle=\langle 243,3\rangle-\#1;7\), has ranks \(\nu=2\), \(\mu=5\)
and descendant numbers \((16/2;27/4)\), \(N=43\).
\end{itemize}

In contrast, groups with abelianization of type \((3,3,3)\)
are partially located beyond the limit of actual computability.

\begin{itemize}
\item
The group \(\langle 81,12\rangle\) of coclass \(2\) has ranks \(\nu=2\), \(\mu=7\)
and\\
descendant numbers \((10/2;100/50)\), \(N=110\).
\item
The group \(\langle 243,37\rangle\) of coclass \(3\) has ranks \(\nu=5\), \(\mu=9\)
and descendant numbers \((35/3;2\,783/186;81\,711/10\,202;350\,652/202\,266;\ldots)\), \(N>4\cdot 10^5\) unknown.
\item
The group \(\langle 729,122\rangle\) of coclass \(4\) has ranks \(\nu=8\), \(\mu=11\)
and descendant numbers \((45/3;117\,919/1\,377;\ldots)\), \(N>10^5\) unknown.
\end{itemize}






\section{Schur multiplier}
\label{s:SchurMpl}

Via the isomorphism
\[\mathbb{Q}/\mathbb{Z}\to\mu_{\infty},\ \frac{n}{d}\mapsto\exp(\frac{n}{d}\cdot 2\pi i)\]
the quotient group

\begin{equation}
\label{eqn:AdditiveRootsOfUnity}
\mathbb{Q}/\mathbb{Z}=\lbrace\frac{n}{d}\cdot\mathbb{Z}\mid d\ge 1,\ 0\le n\le d-1\rbrace
\end{equation}

\noindent
can be viewed as the additive analogue of the multiplicative group

\begin{equation}
\label{eqn:RootsOfUnity}
\mu_{\infty}=\lbrace z\in\mathbb{C}\mid z^d=1 \text{ for some integer } d\ge 1\rbrace
\end{equation}

\noindent
of all roots of unity.

Let \(p\) be a prime number and \(G\) be a finite \(p\)-group with presentation \(G=F/R\) as in the previous section.
Then the second cohomology group

\begin{equation}
\label{eqn:SchurMpl}
M(G):=H^2(G,\mathbb{Q}/\mathbb{Z})
\end{equation}

\noindent
of the \(G\)-module \(\mathbb{Q}/\mathbb{Z}\)
is called the \textit{Schur multiplier} of \(G\).
It can also be interpreted as the quotient group

\begin{equation}
\label{eqn:FreeSchurMpl}
M(G)=(R\cap\lbrack F,F\rbrack)/\lbrack F,R\rbrack.
\end{equation}

I.R. Shafarevich
\cite[\S\ 6, p.146]{Sh}
has proved that the difference between the \textit{relation rank}
\(r(G)=\dim_{\mathbb{F}_p}(H^2(G,\mathbb{F}_p))\) of \(G\)
and the \textit{generator rank}
\(d(G)=\dim_{\mathbb{F}_p}(H^1(G,\mathbb{F}_p))\) of \(G\)
is given by the minimal number of generators of the Schur multiplier of \(G\),
that is

\begin{equation}
\label{eqn:Shafarevich}
r(G)-d(G)=d(M(G)).
\end{equation}

N. Boston and H. Nover
\cite[\S\ 3.2, Prop.2]{BoNo}
have shown that

\begin{equation}
\label{eqn:BostonNover}
\mu(G_j)-\nu(G_j)\le r(G),
\end{equation}

\noindent
for all quotients \(G_j:=G/P_j(G)\) of \(p\)-class \(\mathrm{cl}_p(G_j)=j\), \(j\ge 0\),
of a pro-\(p\) group \(G\) with finite abelianization \(G/G^\prime\).

Furthermore, J. Blackhurst
(in the appendix \textit{On the nucleus of certain p-groups} of a paper by N. Boston, M.R. Bush and F. Hajir
\cite{BBH})
has proved that a non-cyclic finite \(p\)-group \(G\) with trivial Schur multiplier \(M(G)\)
is a terminal vertex in the descendant tree \(\mathcal{T}(1)\) of the trivial group \(1\),
that is,

\begin{equation}
\label{eqn:Blackhurst}
M(G)=1 \Rightarrow \nu(G)=0.
\end{equation}

We conclude this section by giving two examples.

\begin{itemize}
\item
A finite \(p\)-group \(G\) has a balanced presentation \(r(G)=d(G)\) if and only if \(r(G)-d(G)=0=d(M(G))\),
that is, if and only if its Schur multiplier \(M(G)=1\) is trivial. 
Such a group is called a \textit{Schur group}
\cite{Ag,BBH,BuMa,KoVe}
and it must be a leaf in the descendant tree \(\mathcal{T}(1)\).
\item
A finite \(p\)-group \(G\) satisfies \(r(G)=d(G)+1\) if and only if \(r(G)-d(G)=1=d(M(G))\),
that is, if and only if it has a non-trivial cyclic Schur multiplier \(M(G)\). 
Such a group is called a \textit{Schur\(+1\) group}.
\end{itemize}



\section{Pruning strategies}
\label{s:PruningStrategies}

For \textit{searching} a particular group in a descendant tree \(\mathcal{T}(R)\) 
by looking for \textit{patterns} defined by the kernels and targets of Artin transfer homomorphisms
\cite{Ma4},
it is frequently adequate to reduce the number of vertices
in the branches of a dense tree with high complexity
by sifting groups with desired special properties, for example

\begin{enumerate}[({F}1)]
\item  filtering the \(\sigma\)-groups (see Definition
\ref{dfn:SchurSigmaGroup}),
\item  eliminating a set of certain transfer kernel types (TKTs, see
\cite[pp.403--404]{Ma4}),
\item  cancelling all non-metabelian groups (thus restricting to the \textit{metabelian skeleton}),
\item  removing metabelian groups with cyclic centre (usually of higher complexity),
\item  cutting off vertices whose distance from the mainline (depth) exceeds some lower bound,
\item  combining several different sifting criteria.
\end{enumerate}

The result of such a sieving procedure is called a \textit{pruned descendant tree}
\(\mathcal{T}_\ast(R)<\mathcal{T}(R)\) with respect to the desired set of properties.

However, in any case,
it should be avoided that the mainline of a coclass tree is eliminated,
since the result would be a disconnected infinite set of finite graphs instead of a tree.
We expand this idea further in the following detailed discussion of new phenomena.



\section{Striking news: periodic bifurcations in trees}
\label{s:PeriodicBifurcations}

We begin this section about brand-new discoveries
with the most recent example of periodic bifurcations in trees of \(2\)-groups.
It has been found on the 17th of January, 2015, motivated by
a search for metabelian \(2\)-class tower groups
\cite{Ma1}
of complex quadratic fields
\cite{No}
and complex bicyclic biquadratic Dirichlet fields
\cite{AZT}.



\subsection{Finite \(2\)-groups \(G\) with \(G/G^\prime\simeq (2,2,2)\)}
\label{ss:2Groups222}

The \(2\)-groups under investigation are three-generator groups
with elementary abelian commutator factor group of type \((2,2,2)\).
As shown in Figure
\ref{fig:2GrpTyp222Cocl3}
of \S\
\ref{s:ConcreteExamples},
all such groups are descendants of the abelian root \(\langle 8,5\rangle\).
Among its immediate descendants of step size \(2\),
there are three groups which reveal multifurcation.
\(\langle 32,27\rangle\) has nuclear rank \(\nu=3\),
giving rise to \(3\)-fold multifurcation.
The two groups \(\langle 32,28\rangle\) and \(\langle 32,34\rangle\)
possess the required nuclear rank \(\nu=2\) for \textit{bifurcation}.
Due to the arithmetical origin of the problem,
we focused on the latter, \(G:=\langle 32,34\rangle\), and
constructed an extensive finite part of its pruned descendant tree
\(\mathcal{T}_\ast(G)\),
using the \(p\)-group generation algorithm
\cite{Nm2,Ob,HEO}
as implemented in the computational algebra system Magma
\cite{BCP,BCFS,MAGMA}.
All groups turned out to be \textit{metabelian}.



\begin{remark}
\label{rmk:Sifting222}
Since our primary intention is
to provide a sound group theoretic background for several phenomena
discovered in class field theory and algebraic number theory,
we eliminated superfluous brushwood in the descendant trees
to avoid unnecessary complexity.

The selected sifting process for reducing the entire descendant tree
\(\mathcal{T}(G)\)
to the pruned descendant tree
\(\mathcal{T}_\ast(G)\)
filters all vertices which satisfy one of the conditions in Equations
(\ref{eqn:LayeredTKTMainline})
or
(\ref{eqn:LayeredTKTPrdSequence}),
and essentially consists of pruning strategy (F2),
more precisely, of

\begin{enumerate}
\item
omitting all the \(13\) terminal step size-\(2\) descendants, and
\(5\), resp. \(4\), of the \(6\) capable step size-\(2\) descendants,
together with their complete descendant trees, 
in Theorem
\ref{thm:TwoBifurcation},
resp. Corollary
\ref{cor:TwoBifurcation},
and
\item
eliminating
all, resp. \(4\), of the \(5\) terminal step size-\(1\) descendants
in Theorem
\ref{thm:TwoBifurcation},
resp. Corollary
\ref{cor:TwoBifurcation}.
\end{enumerate}

\end{remark}



Denote by \(x,y,z\) the generators of a finite \(2\)-group \(G=\langle x,y,z\rangle\)
with abelian type invariants \((2,2,2)\).
We fix an ordering of the seven maximal normal subgroups by putting

\begin{equation}
\label{eqn:MaxSbgr222}
\begin{aligned}
S_1=\langle y,z,G^\prime\rangle,
S_2=\langle z,x,G^\prime\rangle,
S_3=\langle x,y,G^\prime\rangle,\\
S_4=\langle x,yz,G^\prime\rangle,
S_5=\langle y,zx,G^\prime\rangle,
S_6=\langle z,xy,G^\prime\rangle,\\
S_7=\langle xy,yz,G^\prime\rangle.
\end{aligned}
\end{equation}

Just within this subsection,
we select a special designation for a TKT
\cite[pp.403--404]{Ma4}
whose first layer consists exactly of
all these seven planes in the \(3\)-dimensional
\(\mathbb{F}_2\)-vector space \(G/G^\prime\),
in any ordering.

\begin{definition}
\label{dfn:Permutation}
The transfer kernel type (TKT) \(\varkappa=\lbrack\varkappa_0;\varkappa_1;\varkappa_2;\varkappa_3\rbrack\)
is called a \textit{permutation} if all seven members of the first layer \(\varkappa_1\)
are maximal subgroups of \(G\) and there exists a permutation \(\sigma\in S_7\) such that
\(\varkappa_1=(S_{\sigma(j)})_{1\le j\le 7}\).
\end{definition}



For brevity, we give
\(2\)-logarithms of abelian type invariants in the following theorem
and we denote iteration by formal exponents,
for instance,
\(1^3:=(1,1,1)\hat{=}(2,2,2)\),
\((1^3)^4:=(1^3,1^3,1^3,1^3)\),
\(0^7:=(0,0,0,0,0,0,0)\)  and
\((j+2,j+1)\hat{=}(2^{j+2},2^{j+1})\).
Further, we eliminate an initial anomaly of generalized identifiers
by putting \(G-\#2;1:=G-\#2;8\) and \(G-\#2;2:=G-\#2;9\), formally.

\begin{theorem}
\label{thm:TwoBifurcation}
Let \(\ell\) be a positive integer bounded from above by \(10\).

\begin{enumerate}
\item
In the descendant tree \(\mathcal{T}(G)\) of \(G=\langle 32,34\rangle\),
there exists a unique path of length \(\ell\),
\[G=\delta^0(G)\leftarrow\delta^1(G)\leftarrow\cdots\leftarrow\delta^\ell(G),\]
of (reverse) directed edges with uniform step size \(2\) such that
\(\delta^j(G)=\pi(\delta^{j+1}(G))\), for all \(0\le j\le\ell-1\)
(along the path, \(\delta\) is a section of the surjection \(\pi\)),
and all the vertices

\begin{equation}
\label{eqn:StepSizeTwoDesc}
\delta^j(G)=G(-\#2;1)^j
\end{equation}

\noindent
of this path share the following common invariants:

\begin{itemize}
\item
the transfer kernel type with layer \(\varkappa_1\) containing three \(2\)-cycles
(and nearly a permutation, except for the first component which is total, \(0\hat{=}\delta^j(G)\)),

\begin{equation}
\label{eqn:LayeredTKTMainline}
\varkappa(\delta^j(G))=\lbrack 1;(0,S_6,S_5,S_7,S_3,S_2,S_4);0^7;0\rbrack,
\end{equation}

\item
the \(2\)-multiplicator rank and
the nuclear rank, giving rise to the bifurcation,

\begin{equation}
\label{eqn:Ranks}
\mu(\delta^j(G))=6,\quad
\nu(\delta^j(G))=2,
\end{equation}

\item
and the counters of immediate descendants,

\begin{equation}
\label{eqn:Counters}
N_1(\delta^j(G))=6,\ C_1(\delta^j(G))=1,\quad N_2(\delta^j(G))=19,\ C_2(\delta^j(G))=6,
\end{equation}

\noindent
determining the local structure of the descendant tree.

\end{itemize}

\item
A few other invariants of the vertices \(\delta^j(G)\) depend on the superscript \(j\),

\begin{itemize}
\item
the \(2\)-logarithm of the order,
the nilpotency class and
the coclass,

\begin{equation}
\label{eqn:LogOrdClCc}
\log_2(\mathrm{ord}(\delta^j(G)))=2j+5,\quad
\mathrm{cl}(\delta^j(G))=j+2,\quad
\mathrm{cc}(\delta^j(G))=j+3,
\end{equation}

\item
a single component of layer \(\tau_1\), three components of layer \(\tau_2\),
and layer \(\tau_3\) of the transfer target type

\begin{equation}
\label{eqn:LayeredTTT}
\tau(\delta^j(G))=\lbrack 1^3;((j+2,j+2),(1^3)^6);((j+2,j+1)^3,(1^3)^4);(j+1,j+1)\rbrack.
\end{equation}

\end{itemize}

\end{enumerate}

\end{theorem}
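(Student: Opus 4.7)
The plan is an inductive construction for $j = 0, 1, \ldots, \ell$, combining the $p$-group generation algorithm of Sections \ref{s:Construction}--\ref{s:SchurMpl} (for producing the step-size-$2$ descendants) with direct computation of Artin transfers (for reading off $\varkappa$ and $\tau$). Since $\ell\le 10$ is a finite bound, the argument is essentially algorithmic: at each stage $j$ one exhibits $\delta^{j+1}(G)$ as the unique capable immediate step-size-$2$ descendant of $\delta^j(G)$ selected by the TKT condition, and then checks all the listed invariants. The periodic pattern exhibited by (\ref{eqn:LayeredTKTMainline})--(\ref{eqn:Counters}) is the phenomenon that motivates the theorem, while a structural proof extending $\ell$ to all positive integers would require a generalization of Theorem \ref{thm:Periodicity} and lies outside the present statement.

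For the base case $j = 0$, verify directly from the SmallGroups Library that $G = \langle 32, 34\rangle$ has order $2^5$, class $2$ and coclass $3$, in accordance with (\ref{eqn:LogOrdClCc}); record from Section \ref{s:DescNumbers} the ranks $\mu(G) = 6$, $\nu(G) = 2$ and the descendant counters $(6/1; 19/6)$, yielding (\ref{eqn:Ranks}) and (\ref{eqn:Counters}). Then, with the ordering (\ref{eqn:MaxSbgr222}) of the seven maximal normal subgroups fixed, compute the Artin transfer homomorphisms $G \to S_i/S_i^\prime$ for $1 \le i \le 7$ to confirm the kernel list in (\ref{eqn:LayeredTKTMainline}), and evaluate the abelianizations of $G$, of each $S_i$, and of the relevant index-$4$ and index-$8$ normal subgroups to confirm (\ref{eqn:LayeredTTT}) at $j = 0$.

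For the inductive step, assume $\delta^j(G)$ has been constructed with all claimed invariants. By (\ref{eqn:Counters}) it admits exactly $19$ immediate step-size-$2$ descendants, of which six are capable. For each of the six capable candidates $H$ compute $\varkappa(H)$, and isolate the unique one whose first layer $\varkappa_1(H)$ realises the pattern in (\ref{eqn:LayeredTKTMainline}); by the ANUPQ convention of Section \ref{s:Identifiers} this is the designated $\delta^{j+1}(G) = \delta^j(G) - \#2;1$. Verify that $\delta^{j+1}(G)$ has the order, class and coclass predicted by (\ref{eqn:LogOrdClCc}), the updated layered TTT (\ref{eqn:LayeredTTT}) with $j$ replaced by $j+1$, and the constant invariants (\ref{eqn:Ranks})--(\ref{eqn:Counters}); uniqueness of the overall path follows from the uniqueness of the TKT-selecting step at each level.

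The main obstacle is computational. At $j = 10$ the vertex $\delta^{10}(G)$ has order $2^{25}$, and although its $p$-multiplicator rank stays bounded at $6$, computing the eight-component layered transfer target type (\ref{eqn:LayeredTTT}) for each of the six capable candidates at every level requires substantial commutator calculus in metabelian $2$-groups of rapidly growing complexity. A secondary difficulty is bookkeeping the action of $\mathrm{Aut}(\delta^j(G))$ on the allowable subgroups (Section \ref{s:Orbits}), so that the designation $-\#2;1$ is applied consistently from one level to the next and the component-wise identifications $S_i$ in (\ref{eqn:LayeredTKTMainline}) are not thwarted by spurious relabellings of the seven maximal subgroups.
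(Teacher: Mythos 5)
Your proposal is correct and matches the paper's own argument: the theorem is established purely computationally, by running the $p$-group generation algorithm (ANUPQ in Magma) from the root \(\langle 32,34\rangle\), constructing the step-size-\(2\) descendants \(\delta^j(G)=G(-\#2;1)^j\) for \(0\le j\le 10\), and verifying the listed invariants at each of the finitely many stages. The only cosmetic difference is that the paper builds the entire pruned coclass trees \(\mathcal{T}_\ast^{j+3}(\delta^j(G))\) (invoking the known virtual periodicity to cut off the vertical construction at class \(12\)), whereas you organize the same finite computation as an explicit induction on \(j\) with a TKT-based selection of the next vertex.
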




In view of forthcoming number theoretic applications, we add the following

\begin{corollary}
\label{cor:TwoBifurcation}
Let \(0\le j\le\ell\) be a non-negative integer.

\begin{enumerate}
\item
The regular component \(\mathcal{T}^{j+3}(\delta^j(G))\)
of the descendant tree \(\mathcal{T}(\delta^j(G))\)
is a coclass tree which
contains a unique periodic sequence
whose vertices \(V_{j,k}:=\delta^j(G)(-\#1;1)^k-\#1;2\) with \(k\ge 0\)
are characterized by a permutation TKT

\begin{equation}
\label{eqn:LayeredTKTPrdSequence}
\varkappa(V_{j,k})=\lbrack 1;(S_1,S_6,S_5,S_7,S_3,S_2,S_4);0^7;0\rbrack,
\end{equation}

\noindent
with a single fixed point \(S_1\) and the same three \(2\)-cycles
\((S_2,S_6)\), \((S_3,S_5)\), \((S_4,S_7)\)
as in the mainline TKT of Equation
(\ref{eqn:LayeredTKTMainline}).
\item
The irregular component \(\mathcal{T}^{j+4}(\delta^j(G))\)
of the descendant tree \(\mathcal{T}(\delta^j(G))\)
is a forest which
contains a unique second coclass tree
\(\mathcal{T}^{j+4}(\delta^j(G)-\#2;2)\)
whose mainline vertices
\(M_{j+1,k}:=\delta^j(G)-\#2;2(-\#1;1)^k\) with \(k\ge 0\)
possess the same permutation TKT as in Equation
(\ref{eqn:LayeredTKTPrdSequence}),
apart from the first coclass tree
\(\mathcal{T}^{j+4}(\delta^j(G)-\#2;1)\),
where \(\delta^j(G)-\#2;1=\delta^{j+1}(G)\),
whose mainline vertices \(\delta^{j+1}(G)(-\#1;1)^k\) with \(k\ge 0\)
share the TKT in Equation
(\ref{eqn:LayeredTKTMainline}).
\end{enumerate}

\end{corollary}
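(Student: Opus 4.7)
The plan is to exploit Theorem~\ref{thm:TwoBifurcation} as scaffolding, since it already certifies $\nu(\delta^j(G))=2$ and the counters $(6/1;19/6)$ at every vertex of the path. By the multifurcation principle of Section~\ref{s:Multifurcation}, the nuclear rank $2$ splits the descendant tree as
\[
\mathcal{T}(\delta^j(G))=\mathcal{T}^{j+3}(\delta^j(G))\,\dot{\cup}\,\mathcal{T}^{j+4}(\delta^j(G)),
\]
so the corollary reduces to identifying, inside each component, the coclass trees it contains and computing the transfer kernel types of their mainlines and distinguished periodic sequences.

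First I would treat part~(1). The counter $C_1(\delta^j(G))=1$ identifies a unique capable step-$1$ child of $\delta^j(G)$, necessarily $\delta^j(G)-\#1;1$, whose iterates constitute the single mainline of $\mathcal{T}^{j+3}(\delta^j(G))$ and whose TKT coincides with~\eqref{eqn:LayeredTKTMainline} by the invariance already established in Theorem~\ref{thm:TwoBifurcation}. The periodic twig attached at each mainline vertex is then the sequence $V_{j,k}=\delta^j(G)(-\#1;1)^k-\#1;2$; I would compute its first-layer Artin transfers directly in the parametrized polycyclic presentation of $\delta^j(G)$ to show that the first kernel shifts from the total kernel to the fixed point $S_1$, while the three transpositions $(S_2,S_6)$, $(S_3,S_5)$, $(S_4,S_7)$ persist, yielding~\eqref{eqn:LayeredTKTPrdSequence}. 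Uniqueness of this periodic sequence follows because no other capable step-$1$ child survives the sifting of Remark~\ref{rmk:Sifting222}.

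For part~(2) I would enumerate the six capable step-$2$ descendants counted by $C_2$ and isolate the two that serve as roots of coclass trees of coclass $j+4$. The first, $\delta^j(G)-\#2;1=\delta^{j+1}(G)$, continues the mainline of Theorem~\ref{thm:TwoBifurcation} and so trivially carries the TKT~\eqref{eqn:LayeredTKTMainline}. The second, $\delta^j(G)-\#2;2$, would be shown to generate a complete coclass-$(j+4)$ coclass tree whose mainline vertices $M_{j+1,k}$ satisfy the permutation TKT~\eqref{eqn:LayeredTKTPrdSequence}, again by direct Artin transfer computation in the parametrized presentation. The remaining four capable step-$2$ vertices are either cut by the pruning conditions of Remark~\ref{rmk:Sifting222} or produce only finitely many generations of descendants without forming a coclass tree.

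The main obstacle is uniform control of the Artin transfer computations along the two infinite mainlines in the depth parameter $k$. For each fixed $j\le\ell\le 10$ the base cases for small $k$ can be verified by direct Magma computation with the $p$-group generation algorithm; the passage to all $k\ge 0$ relies on Theorem~\ref{thm:Periodicity} combined with the compatibility of kernels and targets of Artin transfers with parent-child projections mentioned at the end of Section~\ref{s:HistoryDescTrees}. The technical core of the proof will therefore be the commutator-calculus verification that the branch isomorphisms guaranteed by Theorem~\ref{thm:Periodicity} preserve the prescribed permutation structure $(S_1,S_6,S_5,S_7,S_3,S_2,S_4)$; once this is in hand, the statement of the corollary follows by routine bookkeeping over the children of each periodic branch.
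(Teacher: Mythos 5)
Your plan is sound in outline, but note that the paper's own proof of Corollary \ref{cor:TwoBifurcation} (given jointly with Theorem \ref{thm:TwoBifurcation} and Theorem \ref{thm:2ParamPres}) is a bare computational certificate: the pruned tree \(\mathcal{T}_\ast(\langle 32,34\rangle)\), with the roots \(\delta^j(G)\) for \(0\le j\le 10\), was constructed with the \(p\)-group generation algorithm in Magma up to nilpotency class \(12\) and coclass \(13\), the TKTs and descendant counters were read off from that construction, and the strict periodicity of length \(1\) of each coclass tree \(\mathcal{T}^{j+3}(\delta^j(G))\) was invoked to cover all \(k\ge 0\); nothing is computed by hand. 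Your proposal shares this skeleton (Magma verification of base cases plus Theorem \ref{thm:Periodicity} in the \(k\)-direction) but adds a symbolic layer: identification of the mainline from \(C_1=1\), commutator-calculus evaluation of the first-layer transfer kernels in the parametrized presentations, and a verification that the branch isomorphisms of Theorem \ref{thm:Periodicity} transport the permutation structure \((S_1,S_6,S_5,S_7,S_3,S_2,S_4)\). That last step, which you rightly call the technical core, is precisely what the paper leaves implicit (a graph isomorphism of pruned branches does not by itself carry Artin-transfer data), so executing it would make the argument stronger than the published one --- but it is also the part you have not carried out, and without it your argument reduces to the same computation-plus-periodicity certificate as the paper's. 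One subsidiary claim should be repaired: the assertion that the remaining four capable step-size-\(2\) descendants \emph{produce only finitely many generations of descendants without forming a coclass tree} is unsupported (and not needed); the uniqueness in part (2) is relative to the permutation TKT \eqref{eqn:LayeredTKTPrdSequence}, the other capable step-size-\(2\) descendants being excluded because their TKTs fail the filter of Remark \ref{rmk:Sifting222}, as certified by the same computation, not because their descendant trees are finite.
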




\begin{proof}
(of Theorem
\ref{thm:TwoBifurcation},
Corollary
\ref{cor:TwoBifurcation} and
Theorem
\ref{thm:2ParamPres})\\
The \(p\)-group generation algorithm
\cite{Nm2,Ob,HEO}
as implemented in the Magma computational algebra system
\cite{BCP,BCFS,MAGMA}
was employed to construct the pruned descendant tree
\(\mathcal{T}_\ast(G)\) with root \(G=\langle 32,34\rangle\)
which we defined as the disjoint union of all pruned coclass trees
\(\mathcal{T}_\ast^{j+3}(\delta^j(G))\) with the
successive descendants \(\delta^j(G)=G(-\#2;1)^j\), \(0\le j\le 10\),
of step size \(2\) of \(G\) as roots.
Using the well-known virtual periodicity
\cite{dS,EkLg}
of each coclass tree \(\mathcal{T}^{j+3}(\delta^j(G))\),
which turned out to be strict and of the smallest possible length \(1\),
the vertical construction was terminated at nilpotency class \(12\),
considerably deeper than the point where periodicity sets in.
The horizontal construction was extended up to coclass \(13\),
where the amount of CPU time started to become annoying.
\end{proof}

Within the frame of our computations, the periodicity was not
restriced to bifurcations only:
It seems that the pruned (or maybe even the entire) descendant trees
\(\mathcal{T}_\ast(\delta^j(G))\) are all isomorphic to
\(\mathcal{T}_\ast(G)\) as graphs.
This is visualized impressively by Figure
\ref{fig:PeriodicBifurcation222}.



{\tiny

\begin{figure}[hb]
\caption{Periodic Bifurcations in \(\mathcal{T}_\ast(\langle 32,34\rangle)\)}
\label{fig:PeriodicBifurcation222}


\setlength{\unitlength}{0.8cm}
\begin{picture}(18,22)(-4,-21)

\put(-5,0.5){\makebox(0,0)[cb]{Order}}
\put(-5,0){\line(0,-1){18}}
\multiput(-5.1,0)(0,-2){10}{\line(1,0){0.2}}
\put(-5.2,0){\makebox(0,0)[rc]{\(8\)}}
\put(-4.8,0){\makebox(0,0)[lc]{\(2^3\)}}
\put(-5.2,-2){\makebox(0,0)[rc]{\(16\)}}
\put(-4.8,-2){\makebox(0,0)[lc]{\(2^4\)}}
\put(-5.2,-4){\makebox(0,0)[rc]{\(32\)}}
\put(-4.8,-4){\makebox(0,0)[lc]{\(2^5\)}}
\put(-5.2,-6){\makebox(0,0)[rc]{\(64\)}}
\put(-4.8,-6){\makebox(0,0)[lc]{\(2^6\)}}
\put(-5.2,-8){\makebox(0,0)[rc]{\(128\)}}
\put(-4.8,-8){\makebox(0,0)[lc]{\(2^7\)}}
\put(-5.2,-10){\makebox(0,0)[rc]{\(256\)}}
\put(-4.8,-10){\makebox(0,0)[lc]{\(2^8\)}}
\put(-5.2,-12){\makebox(0,0)[rc]{\(512\)}}
\put(-4.8,-12){\makebox(0,0)[lc]{\(2^9\)}}
\put(-5.2,-14){\makebox(0,0)[rc]{\(1\,024\)}}
\put(-4.8,-14){\makebox(0,0)[lc]{\(2^{10}\)}}
\put(-5.2,-16){\makebox(0,0)[rc]{\(2\,048\)}}
\put(-4.8,-16){\makebox(0,0)[lc]{\(2^{11}\)}}
\put(-5.2,-18){\makebox(0,0)[rc]{\(4\,096\)}}
\put(-4.8,-18){\makebox(0,0)[lc]{\(2^{12}\)}}
\put(-5,-18){\vector(0,-1){2}}

\put(-3,0){\makebox(0,0)[cc]{\(\mathcal{G}(2,2)\)}}

\put(-4.05,-0.05){\framebox(0.1,0.1){}}
\put(-3.9,0.2){\makebox(0,0)[lb]{\(\langle 5\rangle\)}}
\put(-4,0){\line(1,-1){4}}
\put(-4,0){\line(1,-2){2}}

\put(0.1,-3.8){\makebox(0,0)[lb]{\(\langle 34\rangle\) (not coclass-settled)}}
\put(0.1,-5.8){\makebox(0,0)[lb]{\(\langle 174\rangle\)}}
\put(0.1,-7.8){\makebox(0,0)[lb]{\(\langle 978\rangle\)}}
\put(0.1,-9.8){\makebox(0,0)[lb]{\(\langle 6713\rangle\)}}
\put(0.1,-11.8){\makebox(0,0)[lb]{\(\langle 60885\rangle\)}}
\put(0.1,-13.8){\makebox(0,0)[lb]{\(1;1\)}}
\put(0.1,-15.8){\makebox(0,0)[lb]{\(1;1\)}}
\put(0.1,-17.8){\makebox(0,0)[lb]{\(1;1\)}}
\multiput(0,-4)(0,-2){8}{\circle*{0.1}}
\multiput(0,-4)(0,-2){7}{\line(0,-1){2}}
\put(0,-18){\vector(0,-1){2}}
\put(0,-20.2){\makebox(0,0)[ct]{\(\mathcal{T}_\ast^3(\langle 32,34\rangle)\)}}

\put(-1,-4){\makebox(0,0)[cc]{\(\mathcal{G}(2,3)\)}}

\put(-2.1,-4.2){\makebox(0,0)[rt]{\(\langle 35\rangle\)}}
\put(-2.1,-6.2){\makebox(0,0)[rt]{\(\langle 181\rangle\)}}
\put(-2.1,-8.2){\makebox(0,0)[rt]{\(\langle 984\rangle\)}}
\put(-2.1,-10.2){\makebox(0,0)[rt]{\(\langle 6719\rangle\)}}
\put(-2.1,-12.2){\makebox(0,0)[rt]{\(\langle 60891\rangle\)}}
\put(-2.1,-14.2){\makebox(0,0)[rt]{\(1;1\)}}
\put(-2.1,-16.2){\makebox(0,0)[rt]{\(1;1\)}}
\put(-2.1,-18.2){\makebox(0,0)[rt]{\(1;1\)}}
\multiput(-2,-4)(0,-2){8}{\circle*{0.1}}
\multiput(-2,-4)(0,-2){7}{\line(0,-1){2}}
\put(-2,-18){\vector(0,-1){2}}
\put(-2,-20.5){\makebox(0,0)[ct]{\(\mathcal{T}_\ast^3(\langle 32,35\rangle)\)}}

\put(-1,-6.2){\makebox(0,0)[ct]{\(\langle 175\rangle\)}}
\put(-1,-8.2){\makebox(0,0)[ct]{\(\langle 979\rangle\)}}
\put(-1,-10.2){\makebox(0,0)[ct]{\(\langle 6714\rangle\)}}
\put(-1,-12.2){\makebox(0,0)[ct]{\(\langle 60886\rangle\)}}
\put(-1,-14.2){\makebox(0,0)[ct]{\(1;2\)}}
\put(-1,-16.2){\makebox(0,0)[ct]{\(1;2\)}}
\put(-1,-18.2){\makebox(0,0)[ct]{\(1;2\)}}
\multiput(0,-4)(0,-2){7}{\line(-1,-2){1}}
\multiput(-1,-6)(0,-2){7}{\circle*{0.1}}

\put(0,-4){\line(1,-1){4}}
\put(0,-4){\line(1,-2){2}}
\put(1.1,-4.8){\makebox(0,0)[lb]{\(1^{\text{st}}\) bifurcation}}

\put(4.1,-7.8){\makebox(0,0)[lb]{\(\langle 444\rangle\) (not coclass-settled)}}
\put(4.1,-9.8){\makebox(0,0)[lb]{\(\langle 5503\rangle\)}}
\put(4.1,-11.8){\makebox(0,0)[lb]{\(\langle 58920\rangle\)}}
\put(4.1,-13.8){\makebox(0,0)[lb]{\(1;1\)}}
\put(4.1,-15.8){\makebox(0,0)[lb]{\(1;1\)}}
\put(4.1,-17.8){\makebox(0,0)[lb]{\(1;1\)}}
\multiput(4,-8)(0,-2){6}{\circle*{0.1}}
\multiput(4,-8)(0,-2){5}{\line(0,-1){2}}
\put(4,-18){\vector(0,-1){2}}
\put(4,-20.2){\makebox(0,0)[ct]{\(\mathcal{T}_\ast^4(\langle 128,444\rangle)\)}}

\put(3,-8){\makebox(0,0)[cc]{\(\mathcal{G}(2,4)\)}}

\put(1.9,-8.2){\makebox(0,0)[rt]{\(\langle 445\rangle\)}}
\put(1.9,-10.2){\makebox(0,0)[rt]{\(\langle 5509\rangle\)}}
\put(1.9,-12.2){\makebox(0,0)[rt]{\(\langle 58926\rangle\)}}
\put(1.9,-14.2){\makebox(0,0)[rt]{\(1;1\)}}
\put(1.9,-16.2){\makebox(0,0)[rt]{\(1;1\)}}
\put(1.9,-18.2){\makebox(0,0)[rt]{\(1;1\)}}
\multiput(2,-8)(0,-2){6}{\circle*{0.1}}
\multiput(2,-8)(0,-2){5}{\line(0,-1){2}}
\put(2,-18){\vector(0,-1){2}}
\put(2,-20.5){\makebox(0,0)[ct]{\(\mathcal{T}_\ast^4(\langle 128,445\rangle)\)}}

\put(3,-10.2){\makebox(0,0)[ct]{\(\langle 5504\rangle\)}}
\put(3,-12.2){\makebox(0,0)[ct]{\(\langle 58921\rangle\)}}
\put(3,-14.2){\makebox(0,0)[ct]{\(1;2\)}}
\put(3,-16.2){\makebox(0,0)[ct]{\(1;2\)}}
\put(3,-18.2){\makebox(0,0)[ct]{\(1;2\)}}
\multiput(4,-8)(0,-2){5}{\line(-1,-2){1}}
\multiput(3,-10)(0,-2){5}{\circle*{0.1}}

\put(4,-8){\line(1,-1){4}}
\put(4,-8){\line(1,-2){2}}
\put(5.1,-8.8){\makebox(0,0)[lb]{\(2^{\text{nd}}\) bifurcation}}

\put(8.1,-11.8){\makebox(0,0)[lb]{\(\langle 30599\rangle\) (not coclass-settled)}}
\put(8.1,-13.8){\makebox(0,0)[lb]{\(1;1\)}}
\put(8.1,-15.8){\makebox(0,0)[lb]{\(1;1\)}}
\put(8.1,-17.8){\makebox(0,0)[lb]{\(1;1\)}}
\multiput(8,-12)(0,-2){4}{\circle*{0.1}}
\multiput(8,-12)(0,-2){3}{\line(0,-1){2}}
\put(8,-18){\vector(0,-1){2}}
\put(8,-20.2){\makebox(0,0)[ct]{\(\mathcal{T}_\ast^5(\langle 512,30599\rangle)\)}}

\put(7,-12){\makebox(0,0)[cc]{\(\mathcal{G}(2,5)\)}}

\put(5.9,-12.2){\makebox(0,0)[rt]{\(\langle 30600\rangle\)}}
\put(5.9,-14.2){\makebox(0,0)[rt]{\(1;1\)}}
\put(5.9,-16.2){\makebox(0,0)[rt]{\(1;1\)}}
\put(5.9,-18.2){\makebox(0,0)[rt]{\(1;1\)}}
\multiput(6,-12)(0,-2){4}{\circle*{0.1}}
\multiput(6,-12)(0,-2){3}{\line(0,-1){2}}
\put(6,-18){\vector(0,-1){2}}
\put(6,-20.5){\makebox(0,0)[ct]{\(\mathcal{T}_\ast^5(\langle 512,30600\rangle)\)}}

\put(7,-14.2){\makebox(0,0)[ct]{\(1;2\)}}
\put(7,-16.2){\makebox(0,0)[ct]{\(1;2\)}}
\put(7,-18.2){\makebox(0,0)[ct]{\(1;2\)}}
\multiput(8,-12)(0,-2){3}{\line(-1,-2){1}}
\multiput(7,-14)(0,-2){3}{\circle*{0.1}}

\put(8,-12){\line(1,-1){4}}
\put(8,-12){\line(1,-2){2}}
\put(9.1,-12.8){\makebox(0,0)[lb]{\(3^{\text{rd}}\) bifurcation}}

\put(12.1,-15.8){\makebox(0,0)[lb]{\(2;1\) (not coclass-settled)}}
\put(12.1,-17.8){\makebox(0,0)[lb]{\(1;1\)}}
\multiput(12,-16)(0,-2){2}{\circle*{0.1}}
\multiput(12,-16)(0,-2){1}{\line(0,-1){2}}
\put(12,-18){\vector(0,-1){2}}
\put(12,-20.2){\makebox(0,0)[ct]{\(\mathcal{T}_\ast^6(\langle 512,30599\rangle-\#2;1)\)}}

\put(11,-16){\makebox(0,0)[cc]{\(\mathcal{G}(2,6)\)}}

\put(9.9,-16.2){\makebox(0,0)[rt]{\(2;2\)}}
\put(9.9,-18.2){\makebox(0,0)[rt]{\(1;1\)}}
\multiput(10,-16)(0,-2){2}{\circle*{0.1}}
\multiput(10,-16)(0,-2){1}{\line(0,-1){2}}
\put(10,-18){\vector(0,-1){2}}
\put(10,-20.5){\makebox(0,0)[ct]{\(\mathcal{T}_\ast^6(\langle 512,30599\rangle-\#2;2)\)}}

\put(11,-18.2){\makebox(0,0)[ct]{\(1;2\)}}
\multiput(12,-16)(0,-2){1}{\line(-1,-2){1}}
\multiput(11,-18)(0,-2){1}{\circle*{0.1}}

\put(12,-16){\line(1,-1){4}}
\put(12,-16){\line(1,-2){2}}
\put(13.1,-16.8){\makebox(0,0)[lb]{\(4^{\text{th}}\) bifurcation}}




\end{picture}

\end{figure}

}



The extent to which we constructed the pruned descendant tree
suggests the following conjecture.

\begin{conjecture}
\label{cnj:TwoBifurcation}
Theorem
\ref{thm:TwoBifurcation},
Corollary
\ref{cor:TwoBifurcation}
and Theorem
\ref{thm:2ParamPres}
remain true for an arbitrarily large positive integer \(\ell\),
not necessarily bounded by \(10\).
\end{conjecture}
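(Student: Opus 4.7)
The plan is to promote Theorem \ref{thm:TwoBifurcation} and Corollary \ref{cor:TwoBifurcation} from the computationally verified range $\ell\le 10$ to all $\ell\ge 0$ by combining a parametrized pc-presentation with the two distinct kinds of periodicity visible in Figure \ref{fig:PeriodicBifurcation222}: \emph{horizontal} (intra-coclass-tree) periodicity, which is already controlled by the du Sautoy and Eick--Leedham-Green Theorem \ref{thm:Periodicity}, and \emph{vertical} (inter-coclass-tree) periodicity along the step-size-$2$ path $G\leftarrow\delta^1(G)\leftarrow\delta^2(G)\leftarrow\cdots$, which is the genuinely novel content of the conjecture.

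For the horizontal part I would fix $j$ and apply Theorem \ref{thm:Periodicity} to the coclass tree $\mathcal{T}^{j+3}(\delta^j(G))$. Since $p=2$, the period length is $d=(p-1)p^s=2^s$ for some $0\le s<\mathrm{cc}(S)$; the observed period $1$ in Equation \eqref{eqn:Counters} forces $s=0$, and the bounded depth of the tree makes the periodicity strict rather than merely virtual. The uniformity of the invariants $\tau_1,\tau_2,\tau_3,\varkappa_1$ along the mainline then follows from rule (R2) applied to the canonical projections between successive lower central quotients, so that \eqref{eqn:LayeredTKTMainline}, \eqref{eqn:LayeredTKTPrdSequence} and \eqref{eqn:LayeredTTT} are consequences of evaluating Artin transfers on a single parametrized presentation, in the spirit of \eqref{eqn:ParamPres2Cocl1} and \eqref{eqn:ParamPres3Cocl1}.

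For the vertical part I would exhibit an explicit family $(\pi_j)_{j\ge 0}$ of parametrized pc-presentations for $\delta^j(G)$, in generators $x,y,z$ together with an expanding chain of iterated commutators $s_2,\ldots,s_{j+3}$. The shift $\delta^j(G)\mapsto\delta^{j+1}(G)$ corresponds, via the machinery of Sections \ref{s:Allowable} and \ref{s:Orbits}, to selecting a distinguished allowable subgroup $M/R^\ast$ of codimension $2$ in the $2$-multiplicator of $\delta^j(G)$, and I would verify by direct commutator calculus that $\pi_{j+1}$ is obtained from $\pi_j$ by adjoining the generator $s_{j+3}$ with $s_{j+3}^2=1$ and commutator relations inherited from the level-$j$ pattern, and that this is consistent with the orbit computation under extended automorphisms. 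All data appearing in the statements ($\mu,\nu,N_1,C_1,N_2,C_2$, the layers of $\varkappa$ and $\tau$) depend only on finitely many commutators of weight $\le j+3$ whose transformation under the shift is governed by finitely many $\mathbb{F}_2$-linear conditions already satisfied in the base case.

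The main obstacle is establishing that the uniform family $(\pi_j)_{j\ge 0}$ really captures the entire pruned descendant subtree rather than merely the distinguished path: unlike in the Eick--Leedham-Green setting, no single infinite pro-$2$ group has coclass tree equal to the bifurcating structure of Figure \ref{fig:PeriodicBifurcation222}, so either one patches together the six pro-$2$ groups attached to the six coclass trees at each level, or one works in a larger limiting object such as an inverse limit of the $2$-covering groups $\delta^j(G)^\ast$. Controlling how $\mathrm{Aut}(\delta^j(G))$ acts on the allowable subgroups of the $2$-multiplicator under this limit, and hence how the equivalence classes enumerated by $N_1,N_2,C_1,C_2$ behave uniformly in $j$, is where I expect the real technical work to lie; the horizontal periodicity and the commutator bookkeeping are comparatively routine.
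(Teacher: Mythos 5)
There is a genuine gap, and it is exactly the one you flag yourself at the end. The statement you are asked to prove is stated in the paper as a \emph{conjecture}: the paper proves Theorem \ref{thm:TwoBifurcation}, Corollary \ref{cor:TwoBifurcation} and Theorem \ref{thm:2ParamPres} only for \(\ell\le 10\), and it does so purely computationally, by running the \(p\)-group generation algorithm in Magma on the pruned tree \(\mathcal{T}_\ast(\langle 32,34\rangle)\) up to nilpotency class \(12\) and coclass \(13\); no theoretical argument for unbounded \(\ell\) is given or known. Your proposal does not close this. The decisive inductive step --- that the shift \(\delta^j(G)\mapsto\delta^{j+1}(G)=\delta^j(G)-\#2;1\) reproduces, for \emph{every} \(j\), the same \(2\)-multiplicator rank \(\mu=6\), nuclear rank \(\nu=2\), and the same orbit structure of allowable subgroups under extended automorphisms (hence the counters \(N_1/C_1=6/1\), \(N_2/C_2=19/6\) of Equation \eqref{eqn:Counters}) --- is asserted to be checkable ``by direct commutator calculus'' from a family \((\pi_j)\) of presentations, but neither the presentations for the step-size-\(2\) descendants nor the orbit computation uniform in \(j\) is actually produced. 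Since the vertical (inter-coclass-tree) periodicity of bifurcations is precisely the content of the conjecture, what remains is not a technical detail but the whole theorem; your final paragraph concedes as much.

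Two subsidiary points are also off. First, Theorem \ref{thm:Periodicity} cannot deliver even the ``horizontal'' part in the form you use it: it applies to a single coclass tree \(\mathcal{T}(S)\) of an infinite pro-\(p\) group, after pruning to a fixed depth \(k\) and only beyond an effective bound \(f(k)\); the asserted period length is the dimension \(d=(p-1)p^s\) of \(S\), and the fact that in the trees \(\mathcal{T}^{j+3}(\delta^j(G))\) the periodicity is strict and of length \(1\) is, in the paper, a computational observation, not a consequence of the theorem (nor does the value of the counters in \eqref{eqn:Counters} ``force'' \(s=0\)). Second, the uniformity of the TKT and TTT data \eqref{eqn:LayeredTKTMainline}, \eqref{eqn:LayeredTKTPrdSequence}, \eqref{eqn:LayeredTTT} along mainlines would have to be read off from the polycyclic presentations of Theorem \ref{thm:2ParamPres} --- but those presentations are themselves only verified computationally for \(r-1\le c\le\ell+2\) with \(\ell\le 10\), so invoking them for arbitrary \(\ell\) is circular unless you first prove them by an induction on class and coclass that you have not supplied. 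A correct route would have to establish the presentation \eqref{eqn:TwoParamPres} and the step-size-\(2\) transition abstractly (e.g.\ by computing \(H^2\) and the action of \(\mathrm{Aut}\) on the \(2\)-multiplicator of \(G_r^c\) uniformly in \(r,c\)); as it stands, your text is a research plan for the conjecture, not a proof of it.
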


\begin{remark}
\label{rmk:Ord32Id35}
We must emphasize that the root \(\langle 8,5\rangle\) in Figure
\ref{fig:PeriodicBifurcation222}
was drawn for the sake of completeness only,
and that the mainline
of the coclass tree \(\mathcal{T}^3(\langle 32,35\rangle)\)
is exceptional, since

\begin{itemize}
\item
its root is \textit{not} a descendant of \(G\) and
\item
the TKT of its vertices \(M_{0,k}:=\langle 32,35\rangle(-\#1;1)^k\) with \(k\ge 0\),

\begin{equation}
\label{eqn:LayeredTKTOrd32Id35}
\varkappa(M_{0,k})=\lbrack 1;(S_1,S_2,S_5,S_4,S_3,S_6,S_7);0^7;0\rbrack,
\end{equation}

\noindent
is a permutation with \(5\) fixed points and
only a single \(2\)-cycle \((S_3,S_5)\).

\end{itemize}

\end{remark}



\noindent
One-parameter \textit{polycyclic pc-presentations} for all occurring groups
are given as follows.

\begin{enumerate}

\item
For the mainline vertices of the coclass tree \(\mathcal{T}^3(\langle 32,34\rangle)\) with class \(c\ge 3\),
that is, starting with \(\langle 64,174\rangle\) and excluding the root \(\langle 32,34\rangle\), by

\begin{equation}
\label{eqn:Group34}
\begin{aligned}
\delta^0(G)(-\#1;1)^{c-2}=G_3^c:=\langle x,y,z,s_2,\ldots,s_c,t_2\mid\\
s_2=\lbrack y,x\rbrack,\ t_2=\lbrack z,x\rbrack,\ s_j=\lbrack s_{j-1},x\rbrack\text{ for }3\le j\le c,\\
x^2=1,\ y^2=s_2s_3,\ z^2=t_2,\\
s_j^2=s_{j+1}s_{j+2}\text{ for }2\le j\le c-2,\ s_{c-1}^2=s_c\rangle.
\end{aligned}
\end{equation}

\item
For the mainline vertices of the coclass tree \(\mathcal{T}^4(\langle 128,444\rangle)\) with class \(c\ge 3\) by

\begin{equation}
\label{eqn:Group444}
\begin{aligned}
\delta^1(G)(-\#1;1)^{c-3}=G_4^c:=\langle x,y,z,s_2,\ldots,s_c,t_2,t_3\mid\\
s_2=\lbrack y,x\rbrack,\ t_2=\lbrack z,x\rbrack,\ s_j=\lbrack s_{j-1},x\rbrack\text{ for }3\le j\le c,\ t_3=\lbrack t_2,x\rbrack,\\
x^2=1,\ y^2=s_2s_3,\ z^2=t_2t_3,\\
s_j^2=s_{j+1}s_{j+2}\text{ for }2\le j\le c-2,\ s_{c-1}^2=s_c,\ t_2^2=t_3\rangle.
\end{aligned}
\end{equation}

\item
For the mainline vertices of the coclass tree \(\mathcal{T}^5(\langle 512,30599\rangle)\) with class \(c\ge 4\) by

\begin{equation}
\label{eqn:Group30599}
\begin{aligned}
\delta^2(G)(-\#1;1)^{c-4}=G_5^c:=\langle x,y,z,s_2,\ldots,s_c,t_2,t_3,t_4\mid\\
s_2=\lbrack y,x\rbrack,\ t_2=\lbrack z,x\rbrack,\ s_j=\lbrack s_{j-1},x\rbrack\text{ for }3\le j\le c,\ t_3=\lbrack t_2,x\rbrack,\ t_4=\lbrack t_3,x\rbrack,\\
x^2=1,\ y^2=s_2s_3,\ z^2=t_2t_3,\\
s_j^2=s_{j+1}s_{j+2}\text{ for }2\le j\le c-2,\ s_{c-1}^2=s_c,\ t_2^2=t_3t_4,\ t_3^2=t_4\rangle.
\end{aligned}
\end{equation}

\end{enumerate}



\begin{theorem}
\label{thm:2ParamPres}
For higher coclass \(4\le r\le\ell+3\) the presentations
(\ref{eqn:Group444})
and
(\ref{eqn:Group30599})
can be generalized in the shape of a two-parameter polycyclic pc-presentation
for class \(r-1\le c\le\ell+2\).

\begin{equation}
\label{eqn:TwoParamPres}
\begin{aligned}
\delta^{r-3}(G)(-\#1;1)^{c-r+1}=G_r^c:=\langle x,y,z,s_2,\ldots,s_c,t_2\ldots,t_{r-1}\mid\\
s_2=\lbrack y,x\rbrack,\ t_2=\lbrack z,x\rbrack,\ s_j=\lbrack s_{j-1},x\rbrack\text{ for }3\le j\le c,\ t_k=\lbrack t_{k-1},x\rbrack\text{ for }3\le k\le r-1,\\
x^2=1,\ y^2=s_2s_3,\ z^2=t_2t_3,\\
s_j^2=s_{j+1}s_{j+2}\text{ for }2\le j\le c-2,\ s_{c-1}^2=s_c,\ t_k^2=t_{k+1}t_{k+2}\text{ for }2\le k\le r-3,\ t_{r-2}^2=t_{r-1}\rangle.
\end{aligned}
\end{equation}

\end{theorem}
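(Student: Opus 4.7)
The plan is to proceed by double induction, with outer induction on the coclass \(r\) (seeded by the explicit coclass-\(3\), \(4\), \(5\) presentations (\ref{eqn:Group34}), (\ref{eqn:Group444}), (\ref{eqn:Group30599})) and inner induction on the nilpotency class \(c\) (seeded by \(c=r-1\) corresponding to the root \(\delta^{r-3}(G)\) of the coclass tree \(\mathcal{T}^r(\delta^{r-3}(G))\)). The inductive statement asserts that (\ref{eqn:TwoParamPres}) defines a metabelian \(2\)-group of order \(2^{c+r}\), nilpotency class \(c\), and coclass \(r\), isomorphic to the mainline vertex \(\delta^{r-3}(G)(-\#1;1)^{c-r+1}\).

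For the inner induction at fixed \(r\), the first step is to verify that (\ref{eqn:TwoParamPres}) is a consistent polycyclic power-commutator presentation. This reduces to checking that the implied commutators \([s_j,y]\), \([s_j,z]\), \([t_k,y]\), \([t_k,z]\) are trivial -- which follows from the metabelianness recorded in Theorem \ref{thm:TwoBifurcation} -- and that pc-collection does not collapse the group. Reading the lower central series directly off the relations then yields nilpotency class \(c\), coclass \(r\), and order \(2^{c+r}\). Finally, I would show that quotienting by the cyclic terminal factor \(\gamma_c(G_r^c)=\langle s_c\rangle\) and relabelling recovers \(G_r^{c-1}\), confirming the parent relation along the mainline and completing the inner step.

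For the outer induction on \(r\), the step \(r\to r+1\) corresponds precisely to the bifurcation at \(\delta^{r-3}(G)\) identified in Corollary \ref{cor:TwoBifurcation}: an irregular step size-\(2\) descent producing \(\delta^{r-2}(G)\), which is the root of \(\mathcal{T}^{r+1}(\delta^{r-2}(G))\). At the level of presentations, this descent amounts to adjoining a single new generator \(t_{r-1}=[t_{r-2},x]\), replacing the top \(t\)-power relation \(t_{r-3}^2=t_{r-2}\) with \(t_{r-3}^2=t_{r-2}t_{r-1}\), and appending \(t_{r-2}^2=t_{r-1}\). That these two simultaneous modifications are exactly what is forced on the pc-presentation follows from the step size being \(s=2\), meaning the corresponding allowable subgroup has index \(4\) in the \(2\)-multiplicator (see Section \ref{s:StepSizes}). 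Matching this against the transitions (\ref{eqn:Group34})\(\to\)(\ref{eqn:Group444})\(\to\)(\ref{eqn:Group30599}) anchors the pattern and propagates it to general \(r\) by induction.

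The hard part is identifying the group defined by (\ref{eqn:TwoParamPres}) with the \emph{specific} mainline vertex \(\delta^{r-3}(G)(-\#1;1)^{c-r+1}\), rather than merely with some metabelian \(2\)-group of the correct order, class, and coclass. To close this gap, I would compute the kernels and images of the seven Artin transfers from \(G_r^c\) to the maximal subgroups listed in (\ref{eqn:MaxSbgr222}), thereby verifying the transfer kernel type (\ref{eqn:LayeredTKTMainline}) and the transfer target type (\ref{eqn:LayeredTTT}). By the effective periodicity of depth-\(1\) pruned branches of \(\mathcal{T}^r(\delta^{r-3}(G))\) guaranteed by Theorem \ref{thm:Periodicity}, together with the uniqueness of the mainline vertex carrying these invariants within its branch, this would pin down \(G_r^c\) as the claimed vertex. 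An entirely analogous verification on the periodic regular-component sequence vertices \(V_{j,k}\) (bearing the permutation TKT (\ref{eqn:LayeredTKTPrdSequence})) would yield the corresponding presentations envisioned in Corollary \ref{cor:TwoBifurcation}.
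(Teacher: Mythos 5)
Your plan takes a genuinely different route from the paper, and in its present form it is a programme rather than a proof. The paper proves Theorem \ref{thm:2ParamPres} (jointly with Theorem \ref{thm:TwoBifurcation} and Corollary \ref{cor:TwoBifurcation}) computationally: since \(\ell\le 10\) the statement concerns only finitely many groups, and the pruned tree \(\mathcal{T}_\ast(\langle 32,34\rangle)\) was constructed explicitly with the \(p\)-group generation algorithm in Magma, vertically up to nilpotency class \(12\) and horizontally up to coclass \(13\), the presentations being obtained from and checked against that construction. An inductive derivation of the kind you sketch would actually prove more than the theorem claims (it would essentially settle Conjecture \ref{cnj:TwoBifurcation}), but it is not carried out, and the places where you defer the work are exactly the nontrivial points.

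Concretely: (1) your consistency argument is circular --- you appeal to the metabelianness recorded in Theorem \ref{thm:TwoBifurcation} to justify that the omitted commutator relations \(\lbrack s_j,y\rbrack\), \(\lbrack t_k,z\rbrack\), etc.\ cause no collapse, but that theorem concerns the vertices of \(\mathcal{T}(\langle 32,34\rangle)\), and you cannot apply it to the abstract group \(G_r^c\) before identifying \(G_r^c\) with such a vertex, which is the assertion under proof; non-collapse and the order \(2^{c+r}\) must be established independently (consistency check of the collected pc-presentation, or a faithful model). (2) The outer induction step is unsubstantiated: step size \(2\) alone does not force the stated modification of the \(t\)-relations. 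By Equation (\ref{eqn:Counters}) the group \(\delta^{r-3}(G)\) has \(N_2=19\) step size-\(2\) descendants, so one must compute the \(2\)-covering group, the allowable subgroups of index \(4\) in the \(2\)-multiplicator and their orbits under extended automorphisms, and then single out which orbit yields \(\delta^{r-2}(G)\); that is precisely the algorithmic computation the paper performs and that your argument tries to bypass. (3) The identification of \(G_r^c\) with the specific mainline vertex rests on an unproved uniqueness claim, namely that within its branch only the mainline vertex carries the TKT (\ref{eqn:LayeredTKTMainline}) and TTT (\ref{eqn:LayeredTTT}); Theorem \ref{thm:Periodicity} gives graph-isomorphy of pruned branches but says nothing about which vertex carries which transfer invariants, so this step again needs either the explicit descendant computation or a separate argument not supplied here.
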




To obtain a presentation
for the vertices \(\delta^{r-3}(G)(-\#1;1)^{c-r}-\#1;2\), \(c\ge r\),
at depth \(1\) in the distinguished periodic sequence
whose vertices are characterized by the permutation TKT
(\ref{eqn:LayeredTKTPrdSequence}),
we must only add the single relation \(x^2=s_c\) to the presentation
(\ref{eqn:TwoParamPres})
of the mainline vertices of the coclass tree
\(\mathcal{T}^{r}(\delta^{r-3}(G))\)
given in Theorem
\ref{thm:2ParamPres}.



\subsection{Finite \(3\)-groups \(G\) with \(G/G^\prime\simeq (3,3)\)}
\label{ss:3Groups33}

We continue this section
with periodic bifurcations in trees of \(3\)-groups,
which have been discovered in 2012 and 2013
\cite{MaA,MaB,MaC},
inspired by a search for \(3\)-class tower groups of complex quadratic fields
\cite{SoTa,HeSm,BuMa},
which must be Schur \(\sigma\)-groups.

These \(3\)-groups are two-generator groups of coclass at least \(2\)
with elementary abelian commutator quotient of type \((3,3)\).
As shown in Figure
\ref{fig:3GrpTyp33Cocl2}
of \S\
\ref{s:ConcreteExamples},
all such groups are descendants of the extra special group \(\langle 27,3\rangle\).
Among its \(7\) immediate descendants of step size \(2\),
there are only two groups which satisfy the requirements arising
from the arithmetical background.

The two groups \(\langle 243,6\rangle\) and \(\langle 243,8\rangle\)
do not show multifurcation themselves but they are not coclass-settled either,
since their immediate mainline descendants
\(Q=\langle 729,49\rangle\) and \(U=\langle 729,54\rangle\)
possess the required nuclear rank \(\nu=2\) for \textit{bifurcation}.
We constructed an extensive finite part of their pruned descendant trees
\(\mathcal{T}_\ast(G)\), \(G\in\lbrace Q,U\rbrace\),
using the \(p\)-group generation algorithm
\cite{Nm2,Ob,HEO}
as implemented in the computational algebra system Magma
\cite{BCP,BCFS,MAGMA}.

Denote by \(x,y\) the generators of a finite \(3\)-group \(G=\langle x,y\rangle\)
with abelian type invariants \((3,3)\).
We fix an ordering of the four maximal normal subgroups by putting

\begin{equation}
\label{eqn:MaxSbgr33}
\begin{aligned}
H_1=\langle y,G^\prime\rangle,
H_2=\langle x,G^\prime\rangle,
H_3=\langle xy,G^\prime\rangle,
H_4=\langle xy^2,G^\prime\rangle.
\end{aligned}
\end{equation}

Within this subsection,
we make use of special designations for transfer kernel types (TKTs)
which were defined generally in
\cite[pp.403--404]{Ma4}
and more specifically for the present scenario in
\cite{Ma,Ma2}.

We are interested in the unavoidable mainline vertices with TKTs\\
\(\mathrm{c}.18\), \(\varkappa=(0,1,2,2)\), resp. \(\mathrm{c}.21\), \(\varkappa=(2,0,3,4)\),\\
and, above all, in most essential vertices of depth \(1\) forming periodic sequences with TKTs\\
\(\mathrm{E}.6\), \(\varkappa=(1,1,2,2)\) and \(\mathrm{E}.14\), \(\varkappa=(3,1,2,2)\sim(4,1,2,2)\),\\
resp. \(\mathrm{E}.8\), \(\varkappa=(2,2,3,4)\) and \(\mathrm{E}.9\), \(\varkappa=(2,3,3,4)\sim(2,4,3,4)\),\\
and we want to eliminate the numerous and annoying vertices with TKTs\\
\(\mathrm{H}.4\), \(\varkappa=(2,1,2,2)\), resp. \(\mathrm{G}.16\), \(\varkappa=(2,1,3,4)\).

We point out that, for instance \(\mathrm{E}.9\), \(\varkappa=(2,3,3,4)\sim(2,4,3,4)\),
is a shortcut for the layer \(\varkappa_1=(H_2,H_3,H_3,H_4)\sim(H_2,H_4,H_3,H_4)\)
of the complete (layered) TKT \(\varkappa=\lbrack\varkappa_0;\varkappa_1;\varkappa_2\rbrack\).



\begin{remark}
\label{rmk:Sifting33}
We choose the following sifting strategy for reducing the entire descendant tree
\(\mathcal{T}(G)\)
to the pruned descendant tree
\(\mathcal{T}_\ast(G)\).
We filter all vertices which,
firstly, are \(\sigma\)-groups,
and secondly
satisfy one of the conditions in Equations
(\ref{eqn:TKTMainline})
or
(\ref{eqn:TKTPrdSequences}),
whence the process is a combination (F6)\(=\)(F1)\(+\)(F2)\(+\)(F5)
and consists of

\begin{enumerate}
\item
keeping all of the
\(3\) terminal step size-\(2\) descendants,
which are exactly the Schur \(\sigma\)-groups, and
omitting
\(2\) of the \(3\) capable step size-\(2\) descendants
having TKT \(\mathrm{H}.4\), resp. \(\mathrm{G}.16\),
together with their complete descendant trees, 
and
\item
eliminating
\(2\) of the \(5\) terminal step size-\(1\) descendants
having TKT \(\mathrm{c}.18\), resp. \(\mathrm{c}.21\),
and 
\(2\) of the \(3\) capable step size-\(1\) descendants
having TKT \(\mathrm{H}.4\), resp. \(\mathrm{G}.16\),
\end{enumerate}

\noindent
in Theorem
\ref{thm:ThreeBifurcation}.

\end{remark}



For brevity, we give
\(3\)-logarithms of abelian type invariants in the following theorem
and we denote iteration by formal exponents,
for instance,
\(1^3:=(1,1,1)\hat{=}(3,3,3)\),
\((2,1)\hat{=}(9,3)\),
\((2,1)^3:=((2,1),(2,1),(2,1))\),
and
\((j+2,j+1)\hat{=}(3^{j+2},3^{j+1})\).
Further, we eliminate some initial anomalies of generalized identifiers
by putting
\(\langle 243,8\rangle-\#1;1:=\langle 243,8\rangle-\#1;3\),
\(\langle 729,54\rangle-\#1;2\vert 4:=\langle 729,54\rangle-\#1;4\vert 2\),
\(\langle 729,54\rangle-\#2;2\vert 4:=\langle 729,54\rangle-\#2;4\vert 2\),
\(\langle 729,54\rangle-\#2;1:=\langle 729,54\rangle-\#2;3\),
\(\langle 729,49\rangle(-\#2;1-\#1;1)^j-\#1;1:=\langle 729,49\rangle(-\#2;1-\#1;1)^j-\#1;2\),
\(\langle 729,49\rangle(-\#2;1-\#1;1)^j-\#1;4\vert 5\vert 6:=\langle 729,49\rangle(-\#2;1-\#1;1)^j-\#1;5\vert 6\vert 7\),
formally.



\begin{theorem}
\label{thm:ThreeBifurcation}
Let \(\ell\) be a positive integer bounded from above by \(8\).

\begin{enumerate}
\item
In the descendant tree \(\mathcal{T}(G)\) of
\(G=\langle 243,6\rangle\), resp. \(G=\langle 243,8\rangle\),
there exists a unique path of length \(2\ell\),
\[G=\delta^0(G)\leftarrow\delta^1(G)\leftarrow\cdots\leftarrow\delta^{2\ell}(G),\]
of (reverse) directed edges of alternating step sizes \(1\) and \(2\) such that
\(\delta^i(G)=\pi(\delta^{i+1}(G))\), for all \(0\le i\le 2\ell-1\), and
all the vertices with even superscript \(i=2j\), \(j\ge 0\),

\begin{equation}
\label{eqn:EvenDesc}
\delta^{2j}(G)=G(-\#1;1-\#2;1)^j,
\end{equation}

resp. all the vertices with odd superscript \(i=2j+1\) \(j\ge 0\),

\begin{equation}
\label{eqn:OddDesc}
\delta^{2j+1}(G)=G(-\#1;1-\#2;1)^j-\#1;1,
\end{equation}

\noindent
of this path share the following common invariants, respectively:

\begin{itemize}
\item
the uniform (w.r.t. \(i\)) transfer kernel type, containing a total component \(0\hat{=}\delta^i(G)\),

\begin{equation}
\label{eqn:TKTMainline}
\varkappa(\delta^i(G))=\lbrack 1;(0,1,2,2)\text{ resp. }(2,0,3,4);0\rbrack,
\end{equation}

\item
the \(2\)-multiplicator rank and
the nuclear rank,

\begin{equation}
\label{eqn:EvenRanks}
\mu(\delta^{2j}(G))=3,\quad
\nu(\delta^{2j}(G))=1,
\end{equation}

\noindent
resp., giving rise to the bifurcation for odd \(i=2j+1\),

\begin{equation}
\label{eqn:OddRanks}
\mu(\delta^{2j+1}(G))=4,\quad
\nu(\delta^{2j+1}(G))=2,
\end{equation}

\item
and the counters of immediate descendants,

\begin{equation}
\label{eqn:EvenCounters}
N_1(\delta^{2j}(G))=4,\ C_1(\delta^{2j}(G))=4,
\end{equation}

\noindent
resp.

\begin{equation}
\label{eqn:OddCounters}
N_1(\delta^{2j+1}(G))=8,\ C_1(\delta^{2j+1}(G))=3,\quad N_2(\delta^{2j+1}(G))=6,\ C_2(\delta^{2j+1}(G))=3,
\end{equation}

\noindent
determining the local structure of the descendant tree.

\end{itemize}

\item
A few other invariants of the vertices \(\delta^i(G)\) depend on the superscript \(i\),

\begin{itemize}
\item
the \(3\)-logarithm of the order,
the nilpotency class and
the coclass,

\begin{equation}
\label{eqn:EvenLogOrdClCc}
\log_3(\mathrm{ord}(\delta^{2j}(G)))=3j+5,\quad
\mathrm{cl}(\delta^{2j}(G))=2j+3,\quad
\mathrm{cc}(\delta^{2j}(G))=j+2,
\end{equation}

\noindent
resp.

\begin{equation}
\label{eqn:OddLogOrdClCc}
\log_3(\mathrm{ord}(\delta^{2j+1}(G)))=3j+6,\quad
\mathrm{cl}(\delta^{2j+1}(G))=2j+4,\quad
\mathrm{cc}(\delta^{2j+1}(G))=j+2,
\end{equation}

\item
a single component of layer \(\tau_1\)
and the layer \(\tau_2\) of the transfer target type

\begin{equation}
\label{eqn:EvenTTT}
\tau(\delta^{2j}(G))=\lbrack 1^2;((j+2,j+1),1^3,(2,1)^2)\text{ resp. }((2,1),(j+2,j+1),(2,1)^2);(j+1,j+1,1)\rbrack,
\end{equation}

\noindent
resp.

\begin{equation}
\label{eqn:OddTTT}
\tau(\delta^{2j+1}(G))=\lbrack 1^2;((j+2,j+2),1^3,(2,1)^2)\text{ resp. }((2,1),(j+2,j+2),(2,1)^2);(j+2,j+1,1)\rbrack.
\end{equation}

\end{itemize}

\end{enumerate}

\end{theorem}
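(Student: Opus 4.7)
The plan is to mirror the computational strategy already employed in the proof of Theorem \ref{thm:TwoBifurcation}, adapting it to the odd prime case. I would construct the pruned descendant trees $\mathcal{T}_\ast(G)$ for $G \in \{\langle 243,6\rangle, \langle 243,8\rangle\}$ with the $p$-group generation algorithm of Newman and O'Brien, as implemented in Magma, following the sifting strategy (F6) spelled out in Remark \ref{rmk:Sifting33}. Because the path alternates between step sizes $1$ and $2$, one recursive generation step produces the regular (step-size $1$) descendant $\delta^{2j+1}(G)$ of $\delta^{2j}(G)$, and a second step exploits the bifurcation at $\delta^{2j+1}(G)$ (which has nuclear rank $\nu=2$) to produce $\delta^{2j+2}(G)$ as the unique irregular (step-size $2$) $\sigma$-descendant carrying the correct TKT.

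The detailed sequence of checks would proceed as follows. First, I would verify the base case $j=0$ by explicit computation: identify $\delta^0(G) = G$, enumerate its four capable step-size-$1$ descendants, and pick out the unique one ($\delta^1(G)$) whose TKT is $(0,1,2,2)$, resp.\ $(2,0,3,4)$, and whose multiplicator/nuclear ranks are $\mu=4$, $\nu=2$. Next, at $\delta^1(G)$, enumerate the six step-size-$2$ descendants and select the unique $\sigma$-group with TKT type $\mathrm{c}.18$, resp.\ $\mathrm{c}.21$, as $\delta^2(G)$; this is the assertion that $\delta^2(G) = G(-\#1;1-\#2;1)$. Then iterate: at each level $j$ up to $\ell = 8$, the Magma routine produces the complete list of immediate descendants together with their ranks, counters $N_s, C_s$, transfer targets $\tau$ and transfer kernels $\varkappa$, and I would extract the unique vertex matching the claimed invariants. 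The invariants of Equations (\ref{eqn:EvenRanks})--(\ref{eqn:OddTTT}) are then read off directly from the pc-presentations returned by the algorithm; the incremental behaviour $\mathrm{ord} \cdot 3$, $\mathrm{cl}+1$ along step size $1$ and $\mathrm{ord} \cdot 9$, $\mathrm{cl}+1$, $\mathrm{cc}+1$ along step size $2$ is built into the $p$-group generation algorithm and confirms the formulas for $\log_3(\mathrm{ord})$, $\mathrm{cl}$, $\mathrm{cc}$.

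The uniqueness claim requires showing that at each level exactly one immediate descendant carries the stated TKT, which is where the sifting criteria of Remark \ref{rmk:Sifting33} are crucial: filtering for the $\sigma$-property together with the TKT removes the competing $\mathrm{H}.4$ / $\mathrm{G}.16$ branches that would otherwise obscure the path. For the even-superscript vertices we also need to note that the TKT $(0,1,2,2)$, resp.\ $(2,0,3,4)$, forces step size $2$ at the next generation (the capable descendants with step size $1$ have nuclear rank $1$ only), while for odd-superscript vertices the nuclear rank $2$ permits both continuations but the $\sigma$-constraint singles out the desired one.

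The main obstacle will be verifying the Artin-transfer data (the kernels $\varkappa$ and targets $\tau$) symbolically rather than merely by table lookup, since for the larger classes $c \geq 2\ell+3$ the groups already have order $3^{3\ell+5}$ or more, and direct computation in Magma becomes delicate around $\ell = 8$. A cleaner theoretical route would replace the brute-force verification by an induction that uses the parametrized pc-presentations analogous to (\ref{eqn:TwoParamPres}) of the preceding subsection: once such presentations are available for the mainline vertices of each coclass tree $\mathcal{T}^{j+2}(\delta^{2j}(G))$, the commutator calculus yields $\varkappa$ and $\tau$ uniformly in $j$, and the virtual periodicity of Theorem \ref{thm:Periodicity} then converts the finite computational check into the stated statement for all $1 \le \ell \le 8$. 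Extending this beyond $\ell = 8$ is the content of Conjecture \ref{cnj:TwoBifurcation}'s analogue for type $(3,3)$.
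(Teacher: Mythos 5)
Your proposal matches the paper's own proof: the authors likewise establish Theorem \ref{thm:ThreeBifurcation} purely computationally, constructing the pruned descendant trees of \(\langle 243,6\rangle\) and \(\langle 243,8\rangle\) with the \(p\)-group generation algorithm in Magma under the sifting strategy of Remark \ref{rmk:Sifting33}, and using the (strict, length-\(2\)) periodicity of the pruned coclass trees to bound the vertical construction (to class \(19\)) while extending horizontally to coclass \(10\), then reading off the ranks, descendant counters, TKTs and TTTs from the computed data. Your suggested refinement via parametrized pc-presentations is an extra, but the core argument is essentially identical to the paper's.
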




Theorem
\ref{thm:ThreeBifurcation}
provided the scaffold of the pruned descendant tree \(\mathcal{T}_\ast(G)\)
of \(G=\langle 243,n\rangle\), for \(n\in\lbrace 6,8\rbrace\),
with mainlines and periodic bifurcations.

With respect to number theoretic applications, however,
the following Corollaries
\ref{cor:ThreeBifurcation}
and
\ref{cor:Covers}
are of the greatest importance.

\begin{corollary}
\label{cor:ThreeBifurcation}
Let \(0\le i\le 2\ell\) be a non-negative integer.

Whereas the vertices with even superscript \(i=2j\), \(j\ge 0\),
that is, \(\delta^{2j}(G)=G(-\#1;1-\#2;1)^j\),
are merely links in the distinguished path,
the vertices with odd superscript \(i=2j+1\) \(j\ge 0\),
that is, \(\delta^{2j+1}(G)=G(-\#1;1-\#2;1)^j-\#1;1\),
reveal the essential periodic bifurcations
with the following properties.

\begin{enumerate}
\item
The regular component \(\mathcal{T}^{j+2}(\delta^{2j+1}(G))\)
of the descendant tree \(\mathcal{T}(\delta^{2j+1}(G))\)
is a coclass tree which contains the mainline,
\[M_{j,k}:=\delta^{2j+1}(G)(-\#1;1)^k\text{ with }k\ge 0,\]
which entirely consists of \(\sigma\)-groups,
and three distinguished periodic sequences
whose vertices
\[V_{j,k}:=\delta^{2j+1}(G)(-\#1;1)^k-\#1;2\vert 4\vert 6\text{ resp. }4\vert 5\vert 6\text{ with }k\ge 0\]
are \(\sigma\)-groups exactly for even \(k=2k^\prime\ge 0\)
and are characterized by the following TKTs
\(\varkappa(V_{j,k})=\lbrack 1;\varkappa_1;0\rbrack\)
with layer \(\varkappa_1\) given by

\begin{equation}
\label{eqn:TKTPrdSequences}
\varkappa_1\in\lbrace(1,1,2,2),(3,1,2,2),(4,1,2,2)\rbrace\text{ resp. }
\varkappa_1\in\lbrace(2,2,3,4),(2,3,3,4),(2,4,3,4)\rbrace,
\end{equation}

\noindent
which deviate from the mainline TKT of Equation
(\ref{eqn:TKTMainline})
in a single component only.
\item
The irregular component \(\mathcal{T}^{j+3}(\delta^{2j+1}(G))\)
of the descendant tree \(\mathcal{T}(\delta^{2j+1}(G))\)
is a forest which
contains a bunch of \(3\) isolated Schur \(\sigma\)-groups
\[S_j:=\delta^{2j+1}(G)-\#2;2\vert 4\vert 6\text{ resp. }4\vert 5\vert 6,\]
which possess the same TKTs as in Equation
(\ref{eqn:TKTPrdSequences}),
and additionally contains the root of the next coclass tree
\(\mathcal{T}^{j+3}(\delta^{2j+1}(G)-\#2;1)\),
where \(\delta^{2j+1}(G)-\#2;1=\delta^{2(j+1)}(G)\),
whose mainline vertices \(\delta^{2(j+1)}(G)(-\#1;1)^k\) with \(k\ge 0\)
share the TKT in Equation
(\ref{eqn:TKTMainline}).
\end{enumerate}

\end{corollary}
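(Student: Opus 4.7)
The plan is to adapt the computational-theoretic strategy used in the proof of Theorem~\ref{thm:TwoBifurcation} to the $3$-group setting. For each root $G\in\lbrace\langle 243,6\rangle,\langle 243,8\rangle\rbrace$ I would invoke the $p$-group generation algorithm of Newman~\cite{Nm2} and O'Brien~\cite{Ob} as implemented in Magma to construct the pruned descendant tree $\mathcal{T}_\ast(G)$ recursively along the distinguished path $G=\delta^0(G)\leftarrow\cdots\leftarrow\delta^{2\ell}(G)$ already established in Theorem~\ref{thm:ThreeBifurcation}. The sifting process of Remark~\ref{rmk:Sifting33} eliminates all non-$\sigma$-groups and all descendants with transfer kernel types $\mathrm{H}.4$ (resp.\ $\mathrm{G}.16$), leaving visible only the three periodic sequences per regular component and the three isolated Schur $\sigma$-groups per irregular component.

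For part~(1), after fixing an odd index $i=2j+1$, I would enumerate the immediate descendants of the bifurcating vertex $\delta^{2j+1}(G)$ using the $p$-covering and $p$-multiplicator machinery of \S\ref{s:CoveringGroup}; the counters $(N_1/C_1;N_2/C_2)=(8/3;6/3)$ from Theorem~\ref{thm:ThreeBifurcation} tell me how many step-size-$1$ and step-size-$2$ descendants to expect. The TKT of each of the eight step-size-$1$ descendants is computed directly from its Artin transfer homomorphisms to the abelianizations of the four maximal subgroups listed in~(\ref{eqn:MaxSbgr33}); this identifies the mainline vertex $M_{j,0}=\delta^{2j+1}(G)$ by the TKT~(\ref{eqn:TKTMainline}) and each of the three sibling roots $V_{j,0}$ by one of the layered TKTs in~(\ref{eqn:TKTPrdSequences}). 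To verify the alternating $\sigma$-group behaviour along the periodic sequences $(V_{j,k})_{k\ge 0}$, I would search $\mathrm{Aut}(V_{j,k})$ for an element inducing inversion on $V_{j,k}/V_{j,k}^\prime\simeq C_3\times C_3$ at small $k$, and then appeal to the virtual periodicity of Theorem~\ref{thm:Periodicity} to propagate the depth-$1$ pattern along the mainline of the coclass tree $\mathcal{T}^{j+2}(\delta^{2j+1}(G))$.

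For part~(2), the bifurcation $\nu(\delta^{2j+1}(G))=2$ splits the six step-size-$2$ descendants into a single coclass-preserving link, namely $\delta^{2j+1}(G)-\#2;1$, which agrees with $\delta^{2(j+1)}(G)$ by the very recursion~(\ref{eqn:EvenDesc}) and therefore supplies the root of the next coclass tree, and three sporadic candidates $S_j=\delta^{2j+1}(G)-\#2;2\vert 4\vert 6$ (resp.\ $-\#2;4\vert 5\vert 6$). For each candidate I would verify the Schur property by computing the generator rank $d(S_j)=2$ and the relation rank $r(S_j)$ from a pc-presentation of the $p$-covering group, and invoking Shafarevich's formula~(\ref{eqn:Shafarevich}) to conclude $d(M(S_j))=r(S_j)-d(S_j)=0$. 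The $\sigma$-property is checked as in part~(1), and terminality of $S_j$ either follows from a direct computation $\nu(S_j)=0$ or from Blackhurst's implication~(\ref{eqn:Blackhurst}).

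The principal obstacle is the transition from finite computation, which is realistically accessible only for small $j$ since $\mathrm{ord}(\delta^{2j+1}(G))=3^{3j+6}$ grows rapidly, to the uniform statement over $0\le j\le 2\ell$. Theorem~\ref{thm:Periodicity} provides the raw graph isomorphism of depth-$k$ pruned branches along each coclass tree, yet graph isomorphism alone does not preserve the arithmetical invariants (TKT, $\sigma$-property, Schur property) that are the substance of the corollary. Bridging this gap is the real crux: one must show that the bifurcation pattern is inherited by the next coclass tree $\mathcal{T}^{j+3}(\delta^{2(j+1)+1}(G))$ from its predecessor $\mathcal{T}^{j+2}(\delta^{2j+1}(G))$, ideally by constructing an explicit parametrized pc-presentation in the spirit of~(\ref{eqn:TwoParamPres}) that encodes the periodic bifurcation structure in closed form, and then verifying compatibility of Artin transfers with the parameter shift.
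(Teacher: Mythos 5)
Your strategy is essentially the paper's own: the stated proof of Corollary~\ref{cor:ThreeBifurcation} (given jointly with Theorem~\ref{thm:ThreeBifurcation}, Corollary~\ref{cor:Covers} and Theorem~\ref{thm:EvenBranches}) is purely computational, constructing the pruned trees $\mathcal{T}_\ast(\langle 243,6\vert 8\rangle)$ with the $p$-group generation algorithm in Magma, reading off TKTs, $\sigma$-automorphisms and the Schur invariants ($\mu=2$, $\nu=0$ for the three isolated siblings), and handling the infinite $k$-direction by the observed \emph{strict} periodicity of length $2$ of each pruned coclass tree $\mathcal{T}_\ast^{j+2}(\delta^{2j+1}(G))$, with the vertical construction carried to class $19$, well beyond the onset of periodicity. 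The one place where you diverge is your final paragraph: the ``real crux'' you identify --- bridging from finite computation to arbitrary $j$ by an inheritance argument or a parametrized pc-presentation compatible with Artin transfers --- is not needed for the corollary as stated, because $\ell$ is bounded by $8$ (inherited from Theorem~\ref{thm:ThreeBifurcation}), and the paper simply pushes the horizontal construction to coclass $10$, which covers the whole claimed range; the uniform statement for arbitrary $\ell$ is deliberately left open as Conjecture~\ref{cnj:ThreeBifurcation}, and the presentations (\ref{eqn:Scnd3ClsGrp})--(\ref{eqn:3TwrGrp262744ES}) are given only descriptively for the first three coclass trees, not as part of the proof. Your related caveat --- that the periodicity isomorphisms of Theorem~\ref{thm:Periodicity} are graph isomorphisms and do not by themselves preserve TKT, $\sigma$- or Schur properties --- is a fair criticism, but it applies equally to the paper's own argument, which addresses it only by computing considerably deeper than the periodic root rather than by any theoretical compatibility statement; so on this point your proposal is, if anything, more candid about the logical gap than the published proof, while following the same route.
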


The metabelian \(3\)-groups forming the three distinguished periodic sequences
\[V_{0,2k}=\delta^1(G)(-\#1;1)^{2k}-\#1;2\vert 4\vert 6\text{ resp. }4\vert 5\vert 6\text{ with }k\ge 0\]
of the pruned coclass tree \(\mathcal{T}_\ast^2(G)\)
in Corollary
\ref{cor:ThreeBifurcation},
for \(i=0\),
belong to the few groups for which all immediate descendants
with respect to the parent definition (P4) are known.
(We did not use this kind of descendants up to now.)
Since all groups in \(\mathcal{T}(G)\setminus\mathcal{T}^2(G)\)
are of derived length \(3\), the set of these descendants
can be defined in the following way.

\begin{definition}
\label{dfn:Covers}
Let \(P\) be a finite \(p\)-group,
then the set of all finite \(p\)-groups \(D\)
whose second derived quotient \(D/D^{\prime\prime}\) is isomorphic to \(P\)
is called the \textit{cover} \(\mathrm{cov}(P)\) of \(P\).
The subset \(\mathrm{cov}_\ast(P)\) consisting of
all Schur \(\sigma\)-groups in \(\mathrm{cov}(P)\)
is called the \textit{balanced cover} of \(P\).
\end{definition}

\begin{corollary}
\label{cor:Covers}
For \(0\le k\le\ell\),
the group \(V_{0,2k}\), which does not have a balanced presentation,
possesses a finite cover of cardinality \(\#\mathrm{cov}(V_{0,2k})=k+1\)
and a unique Schur \(\sigma\)-group in its balanced cover \(\#\mathrm{cov}_\ast(V_{0,2k})=1\).
More precisely, the covers are given explicitly by

\begin{equation}
\label{eqn:Covers}
\begin{aligned}
\mathrm{cov}(V_{0,2k})=\lbrace V_{j,2(k-j)}\mid 1\le j\le k\rbrace\cup\lbrace S_k\rbrace,\\
\mathrm{cov}_\ast(V_{0,2k})=\lbrace S_k\rbrace.
\end{aligned}
\end{equation}

\end{corollary}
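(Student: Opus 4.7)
The plan is to combine the periodic bifurcation skeleton of Corollary \ref{cor:ThreeBifurcation} with the characterization of Schur groups via Blackhurst's terminality criterion (\ref{eqn:Blackhurst}). The non-balanced presentation of \(V_{0,2k}\) itself follows from inspection of the pc-presentation of a depth-\(1\) vertex in the distinguished periodic sequence: its relation rank strictly exceeds \(d=2\), so the Schur multiplier is non-trivial. The main content is therefore the enumeration and classification of \(\mathrm{cov}(V_{0,2k})\), which I would carry out in three parts.

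For the inclusion \(\supseteq\), I would verify each listed group belongs to the cover by computing its metabelianization from its pc-presentation. For each \(j\in\lbrace 1,\ldots,k\rbrace\), the group \(V_{j,2(k-j)}\) carries a pc-presentation analogous to (\ref{eqn:TwoParamPres}) in which the generators of the higher derived layer form a chain that collapses modulo the second derived subgroup, yielding exactly the pc-presentation of \(V_{0,2k}\). The same collapse applied to \(S_k\), regarded as the terminal Schur \(\sigma\)-group attached to the irregular component of \(\delta^{2k+1}(G)\), gives \(S_k/S_k''\simeq V_{0,2k}\).

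The reverse inclusion \(\subseteq\) is the main obstacle. My approach is to argue recursively via the orbit enumeration of allowable subgroups from \S\ \ref{s:Orbits}. Given \(D\in\mathrm{cov}(V_{0,2k})\), the class-\(c\) metabelian quotient of \(D\) matches that of \(V_{0,2k}\), so the iterated parents \(\pi^i(D)\) must descend through the bifurcation tower \(\delta^{2k+1}(G)\leftarrow\delta^{2k-1}(G)\leftarrow\cdots\leftarrow\delta^1(G)\) of Theorem \ref{thm:ThreeBifurcation}. Matching the Artin transfer invariants (\ref{eqn:TKTMainline}), (\ref{eqn:TKTPrdSequences}), (\ref{eqn:EvenTTT}) and (\ref{eqn:OddTTT}) layer by layer, the \(p\)-group generation algorithm admits only two orbits of allowable subgroups preserving the correct metabelianization at each bifurcation level \(j\): the step-\(1\) continuation into the next periodic vertex (producing \(V_{j,*}\)) and the step-\(2\) irregular Schur \(\sigma\)-bunch (producing \(S_j\)). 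Since \(D\) has fixed order, the recursion closes after \(k\) steps and forces \(D\) into the list \(\lbrace V_{j,2(k-j)}\mid 1\le j\le k\rbrace\cup\lbrace S_k\rbrace\). The delicate point is controlling the orbit count at each bifurcation so that no additional ``exotic'' metabelianly-equivalent siblings slip in; this is where the explicit TKT/TTT fingerprints in Corollary \ref{cor:ThreeBifurcation} are essential.

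Finally, the balanced-cover statement \(\mathrm{cov}_\ast(V_{0,2k})=\lbrace S_k\rbrace\) follows from comparing relation ranks: \(S_k\) has \(r(S_k)=d(S_k)=2\) and is verified in Corollary \ref{cor:ThreeBifurcation} to admit the required \(\sigma\)-automorphism, whereas each \(V_{j,2(k-j)}\) for \(1\le j\le k\) has strictly larger relation rank and is only a \(\sigma\)-group, not a Schur group, as can be read from its pc-presentation. Blackhurst's result (\ref{eqn:Blackhurst}) then confirms the singleton nature of the balanced cover: any further Schur \(\sigma\)-group candidate would have to be a leaf of \(\mathcal{T}(1)\), and inspection of the pruned tree produced by the recursion above shows that \(S_k\) is its only member.
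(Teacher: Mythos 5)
There is a genuine gap, and it sits exactly where you acknowledge the ``delicate point'': the reverse inclusion $\subseteq$, which also carries the finiteness assertion $\#\mathrm{cov}(V_{0,2k})=k+1$. Your recursion rests on the claim that at each bifurcation level the orbit enumeration of allowable subgroups ``admits only two orbits preserving the correct metabelianization'' (the step-size~$1$ continuation and the step-size~$2$ Schur $\sigma$-bunch). That claim is precisely the non-trivial content of Corollary~\ref{cor:Covers}; it is neither a formal consequence of the TKT/TTT data in Theorem~\ref{thm:ThreeBifurcation} and Corollary~\ref{cor:ThreeBifurcation} (transfer invariants do not by themselves determine the metabelianization of a descendant), nor is it proved in your sketch. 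Moreover, the step ``Since $D$ has fixed order, the recursion closes after $k$ steps'' is false as stated: an element $D\in\mathrm{cov}(V_{0,2k})$ has no prescribed order --- indeed the listed members $V_{j,2(k-j)}$ and $S_k$ have orders $3^{2k+7+j}$ and $3^{3k+8}$ respectively, varying with $j$ --- and ruling out arbitrarily large groups with second derived quotient $V_{0,2k}$ (equivalently, bounding $D^{\prime\prime}$) is part of what must be shown. Finally, the closing appeal to Blackhurst's criterion (\ref{eqn:Blackhurst}) is circular, since ``inspection of the pruned tree produced by the recursion above'' presupposes the enumeration you are trying to establish.

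For comparison, the paper does not give a theoretical argument at all: Corollary~\ref{cor:Covers} is proved (jointly with Theorem~\ref{thm:ThreeBifurcation}, Corollary~\ref{cor:ThreeBifurcation} and Theorem~\ref{thm:EvenBranches}) by an explicit computation with the $p$-group generation algorithm in Magma, constructing the pruned descendant trees $\mathcal{T}_\ast(\langle 243,6\vert 8\rangle)$ down to nilpotency class $19$ and out to coclass $10$, and reading off the covers from that construction; this is why the statement is restricted to $k\le\ell\le 8$ and why the unbounded version is only Conjecture~\ref{cnj:ThreeBifurcation}. Your $\supseteq$ direction and the non-balancedness of $V_{0,2k}$ are reasonable and could be made rigorous from the pc-presentations (\ref{eqn:Scnd3ClsGrp})--(\ref{eqn:3TwrGrp262744ES}) by setting the $u$-generators equal to $1$, but to repair the proposal you would either have to import the same computational verification (orbit counts, metabelianizations and Schur multipliers of all step-size~$1$ and step-size~$2$ descendants at every level $j\le k$), or supply a genuinely new structural argument bounding $D^{\prime\prime}$ and classifying the allowable subgroups --- neither of which is currently in the sketch.
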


The arrows in Figures
\ref{fig:MultiPeriodQAdmissible}
and
\ref{fig:MultiPeriodUAdmissible}
indicate the projections \(\pi\) from all members \(D\) of a cover \(\mathrm{cov}(P)\)
onto the common metabelianization \(P\),
that is, in the sense of the parent definition (P4),
from the descendants \(D\) onto the parent \(P=\pi(D)\).



\begin{proof}
(of Theorem
\ref{thm:ThreeBifurcation},
Corollary
\ref{cor:ThreeBifurcation},
Corollary
\ref{cor:Covers}
and Theorem
\ref{thm:EvenBranches})\\
The \(p\)-group generation algorithm
\cite{Nm2,Ob,HEO},
which is implemented in the computational algebra system Magma
\cite{BCP,BCFS,MAGMA},
was used for constructing the pruned descendant trees
\(\mathcal{T}_\ast(G)\) with roots \(G=\langle 243,6\vert 8\rangle\)
which were defined as the disjoint union of all pruned coclass trees
\(\mathcal{T}_\ast^{j+2}(\delta^{2j+1}(G))\) of the
descendants \(\delta^{2j+1}(G)=G(-\#1;1-\#2;1)^j-\#1;1\), \(0\le j\le 10\),
of \(G\) as roots, together with \(4\) siblings
in the irregular component \(\mathcal{T}_\ast^{j+3}(\delta^{2j+1}(G))\),
\(3\) of them Schur \(\sigma\)-groups with \(\mu=2\) and \(\nu=0\).
Using the strict periodicity
\cite{dS,EkLg}
of each pruned coclass tree \(\mathcal{T}_\ast^{j+2}(\delta^{2j+1}(G))\),
which turned out to be of length \(2\),
the vertical construction was terminated at nilpotency class \(19\),
considerably deeper than the point where periodicity sets in.
The horizontal construction was extended up to coclass \(10\),
where the consumption of CPU time became daunting.
\end{proof}



{\tiny

\begin{figure}[ht]
\caption{Periodic Bifurcations in \(\mathcal{T}_\ast(\langle 243,6\rangle)\)}
\label{fig:MultiPeriodQAdmissible}


\setlength{\unitlength}{0.8cm}
\begin{picture}(18,22)(-6,-21)

\put(-5,0.5){\makebox(0,0)[cb]{Order}}
\put(-5,0){\line(0,-1){18}}
\multiput(-5.1,0)(0,-2){10}{\line(1,0){0.2}}
\put(-5.2,0){\makebox(0,0)[rc]{\(243\)}}
\put(-4.8,0){\makebox(0,0)[lc]{\(3^5\)}}
\put(-5.2,-2){\makebox(0,0)[rc]{\(729\)}}
\put(-4.8,-2){\makebox(0,0)[lc]{\(3^6\)}}
\put(-5.2,-4){\makebox(0,0)[rc]{\(2\,187\)}}
\put(-4.8,-4){\makebox(0,0)[lc]{\(3^7\)}}
\put(-5.2,-6){\makebox(0,0)[rc]{\(6\,561\)}}
\put(-4.8,-6){\makebox(0,0)[lc]{\(3^8\)}}
\put(-5.2,-8){\makebox(0,0)[rc]{\(19\,683\)}}
\put(-4.8,-8){\makebox(0,0)[lc]{\(3^9\)}}
\put(-5.2,-10){\makebox(0,0)[rc]{\(59\,049\)}}
\put(-4.8,-10){\makebox(0,0)[lc]{\(3^{10}\)}}
\put(-5.2,-12){\makebox(0,0)[rc]{\(177\,147\)}}
\put(-4.8,-12){\makebox(0,0)[lc]{\(3^{11}\)}}
\put(-5.2,-14){\makebox(0,0)[rc]{\(531\,441\)}}
\put(-4.8,-14){\makebox(0,0)[lc]{\(3^{12}\)}}
\put(-5.2,-16){\makebox(0,0)[rc]{\(1\,594\,323\)}}
\put(-4.8,-16){\makebox(0,0)[lc]{\(3^{13}\)}}
\put(-5.2,-18){\makebox(0,0)[rc]{\(4\,782\,969\)}}
\put(-4.8,-18){\makebox(0,0)[lc]{\(3^{14}\)}}
\put(-5,-18){\vector(0,-1){2}}

\put(0.1,0.2){\makebox(0,0)[lb]{\(\langle 6\rangle\)}}
\put(0.1,-1.8){\makebox(0,0)[lb]{\(\langle 49\rangle\) (not coclass-settled)}}
\put(1.1,-2.8){\makebox(0,0)[lb]{\(1^{\text{st}}\) bifurcation}}
\put(0.1,-3.8){\makebox(0,0)[lb]{\(\langle 285\rangle\)}}
\put(0.1,-5.8){\makebox(0,0)[lb]{\(1;1\)}}
\put(0.1,-7.8){\makebox(0,0)[lb]{\(1;1\)}}
\put(0.1,-9.8){\makebox(0,0)[lb]{\(1;1\)}}
\put(0.1,-11.8){\makebox(0,0)[lb]{\(1;1\)}}
\multiput(0,0)(0,-2){7}{\circle*{0.1}}
\multiput(0,0)(0,-2){6}{\line(0,-1){2}}
\put(0,-12){\vector(0,-1){2}}
\put(-0.2,-14.2){\makebox(0,0)[rt]{\(\mathcal{T}_\ast^2(\langle 243,6\rangle)\)}}

\put(-3,-4.2){\makebox(0,0)[ct]{\(\langle 290\rangle\)}}
\put(-3,-8.2){\makebox(0,0)[ct]{\(1;6\)}}
\put(-3,-12.2){\makebox(0,0)[ct]{\(1;6\)}}
\multiput(0,-2)(0,-4){3}{\line(-3,-2){3}}
\multiput(-3,-4)(0,-4){3}{\circle*{0.1}}

\put(-2,-4.4){\makebox(0,0)[ct]{\(\langle 289\rangle\)}}
\put(-2,-8.4){\makebox(0,0)[ct]{\(1;5\)}}
\put(-2,-12.4){\makebox(0,0)[ct]{\(1;5\)}}
\multiput(0,-2)(0,-4){3}{\line(-1,-1){2}}
\multiput(-2,-4)(0,-4){3}{\circle*{0.1}}

\put(-1,-4.2){\makebox(0,0)[ct]{\(\langle 288\rangle\)}}
\put(-1,-8.2){\makebox(0,0)[ct]{\(1;4\)}}
\put(-1,-12.2){\makebox(0,0)[ct]{\(1;4\)}}
\multiput(0,-2)(0,-4){3}{\line(-1,-2){1}}
\multiput(-1,-4)(0,-4){3}{\circle*{0.1}}


\multiput(1,-6)(1,0){3}{\vector(-2,1){4}}
\multiput(5,-12)(1,0){3}{\vector(-2,1){8}}
\multiput(9,-18)(1,0){3}{\vector(-2,1){12}}

\put(0,-2){\line(1,-1){4}}

\put(4.1,-5.8){\makebox(0,0)[lb]{\(2;1\)}}
\put(4.1,-7.8){\makebox(0,0)[lb]{\(1;1\) (not coclass-settled)}}
\put(5.1,-8.8){\makebox(0,0)[lb]{\(2^{\text{nd}}\) bifurcation}}
\put(4.1,-9.8){\makebox(0,0)[lb]{\(1;2\)}}
\put(4.1,-11.8){\makebox(0,0)[lb]{\(1;1\)}}
\put(4.1,-13.8){\makebox(0,0)[lb]{\(1;1\)}}
\multiput(3.95,-6.05)(0,-2){5}{\framebox(0.1,0.1){}}
\multiput(4,-6)(0,-2){4}{\line(0,-1){2}}
\put(4,-14){\vector(0,-1){2}}
\put(3.8,-16.2){\makebox(0,0)[rt]{\(\mathcal{T}_\ast^3(\langle 729,49\rangle-\#2;1)\)}}

\put(1,-6.2){\makebox(0,0)[ct]{\(2;6\)}}
\put(1,-10.2){\makebox(0,0)[ct]{\(1;7\)}}
\put(1,-14.2){\makebox(0,0)[ct]{\(1;6\)}}
\put(0,-2){\line(1,-4){1}}
\multiput(4,-8)(0,-4){2}{\line(-3,-2){3}}
\multiput(0.95,-6.05)(0,-4){3}{\framebox(0.1,0.1){}}

\put(2,-6.4){\makebox(0,0)[ct]{\(2;5\)}}
\put(2,-10.4){\makebox(0,0)[ct]{\(1;6\)}}
\put(2,-14.4){\makebox(0,0)[ct]{\(1;5\)}}
\put(0,-2){\line(1,-2){2}}
\multiput(4,-8)(0,-4){2}{\line(-1,-1){2}}
\multiput(1.95,-6.05)(0,-4){3}{\framebox(0.1,0.1){}}

\put(3,-6.2){\makebox(0,0)[ct]{\(2;4\)}}
\put(3,-10.2){\makebox(0,0)[ct]{\(1;5\)}}
\put(3,-14.2){\makebox(0,0)[ct]{\(1;4\)}}
\put(0,-2){\line(3,-4){3}}
\multiput(4,-8)(0,-4){2}{\line(-1,-2){1}}
\multiput(2.95,-6.05)(0,-4){3}{\framebox(0.1,0.1){}}

\put(4,-8){\line(1,-1){4}}

\put(8.1,-11.8){\makebox(0,0)[lb]{\(2;1\)}}
\put(8.1,-13.8){\makebox(0,0)[lb]{\(1;1\) (not coclass-settled)}}
\put(9.1,-14.8){\makebox(0,0)[lb]{\(3^{\text{rd}}\) bifurcation}}
\put(8.1,-15.8){\makebox(0,0)[lb]{\(1;2\)}}
\multiput(7.95,-12.05)(0,-2){3}{\framebox(0.1,0.1){}}
\multiput(8,-12)(0,-2){2}{\line(0,-1){2}}
\put(8,-16){\vector(0,-1){2}}
\put(7.8,-18.2){\makebox(0,0)[rt]{\(\mathcal{T}_\ast^4(\langle 729,49\rangle-\#2;1-\#1;1-\#2;1)\)}}

\put(5,-12.2){\makebox(0,0)[ct]{\(2;6\)}}
\put(5,-16.2){\makebox(0,0)[ct]{\(1;7\)}}
\put(4,-8){\line(1,-4){1}}
\multiput(8,-14)(0,-4){1}{\line(-3,-2){3}}
\multiput(4.95,-12.05)(0,-4){2}{\framebox(0.1,0.1){}}

\put(6,-12.4){\makebox(0,0)[ct]{\(2;5\)}}
\put(6,-16.4){\makebox(0,0)[ct]{\(1;6\)}}
\put(4,-8){\line(1,-2){2}}
\multiput(8,-14)(0,-4){1}{\line(-1,-1){2}}
\multiput(5.95,-12.05)(0,-4){2}{\framebox(0.1,0.1){}}

\put(7,-12.2){\makebox(0,0)[ct]{\(2;4\)}}
\put(7,-16.2){\makebox(0,0)[ct]{\(1;5\)}}
\put(4,-8){\line(3,-4){3}}
\multiput(8,-14)(0,-4){1}{\line(-1,-2){1}}
\multiput(6.95,-12.05)(0,-4){2}{\framebox(0.1,0.1){}}

\put(8,-14){\line(1,-1){4}}

\put(12.1,-17.8){\makebox(0,0)[lb]{\(2;1\)}}
\multiput(11.95,-18.05)(0,-2){1}{\framebox(0.1,0.1){}}
\put(12,-18){\vector(0,-1){2}}
\put(11.8,-19.9){\makebox(0,0)[rt]{\(\mathcal{T}_\ast^5(\langle 729,49\rangle-\#2;1-\#1;1-\#2;1-\#1;1-\#2;1)\)}}

\put(9,-18.2){\makebox(0,0)[ct]{\(2;6\)}}
\put(8,-14){\line(1,-4){1}}
\multiput(8.95,-18.05)(0,-4){1}{\framebox(0.1,0.1){}}

\put(10,-18.4){\makebox(0,0)[ct]{\(2;5\)}}
\put(8,-14){\line(1,-2){2}}
\multiput(9.95,-18.05)(0,-4){1}{\framebox(0.1,0.1){}}

\put(11,-18.2){\makebox(0,0)[ct]{\(2;4\)}}
\put(8,-14){\line(3,-4){3}}
\multiput(10.95,-18.05)(0,-4){1}{\framebox(0.1,0.1){}}

\put(-5,-20.9){\makebox(0,0)[cc]{\textbf{TKT:}}}
\put(-3,-21){\makebox(0,0)[cc]{\(\varkappa_1\)}}
\put(-2,-21){\makebox(0,0)[cc]{\(\varkappa_2\)}}
\put(-1,-21){\makebox(0,0)[cc]{\(\varkappa_3\)}}
\put(0,-21){\makebox(0,0)[cc]{\(\varkappa_0\)}}
\put(1,-21){\makebox(0,0)[cc]{\(\varkappa_1\)}}
\put(2,-21){\makebox(0,0)[cc]{\(\varkappa_2\)}}
\put(3,-21){\makebox(0,0)[cc]{\(\varkappa_3\)}}
\put(4,-21){\makebox(0,0)[cc]{\(\varkappa_0\)}}
\put(5,-21){\makebox(0,0)[cc]{\(\varkappa_1\)}}
\put(6,-21){\makebox(0,0)[cc]{\(\varkappa_2\)}}
\put(7,-21){\makebox(0,0)[cc]{\(\varkappa_3\)}}
\put(8,-21){\makebox(0,0)[cc]{\(\varkappa_0\)}}
\put(9,-21){\makebox(0,0)[cc]{\(\varkappa_1\)}}
\put(10,-21){\makebox(0,0)[cc]{\(\varkappa_2\)}}
\put(11,-21){\makebox(0,0)[cc]{\(\varkappa_3\)}}
\put(12,-21){\makebox(0,0)[cc]{\(\varkappa_0\)}}
\put(-5.8,-21.2){\framebox(18.6,0.6){}}

\multiput(-2,-4.25)(0,-4){3}{\oval(3,1.5)}

\multiput(2,-6.25)(4,-6){3}{\oval(3,1.5)}

\end{picture}

\end{figure}

}



{\tiny

\begin{figure}[ht]
\caption{Periodic Bifurcations in \(\mathcal{T}_\ast(\langle 243,8\rangle)\)}
\label{fig:MultiPeriodUAdmissible}


\setlength{\unitlength}{0.8cm}
\begin{picture}(18,22)(-6,-21)

\put(-5,0.5){\makebox(0,0)[cb]{Order}}
\put(-5,0){\line(0,-1){18}}
\multiput(-5.1,0)(0,-2){10}{\line(1,0){0.2}}
\put(-5.2,0){\makebox(0,0)[rc]{\(243\)}}
\put(-4.8,0){\makebox(0,0)[lc]{\(3^5\)}}
\put(-5.2,-2){\makebox(0,0)[rc]{\(729\)}}
\put(-4.8,-2){\makebox(0,0)[lc]{\(3^6\)}}
\put(-5.2,-4){\makebox(0,0)[rc]{\(2\,187\)}}
\put(-4.8,-4){\makebox(0,0)[lc]{\(3^7\)}}
\put(-5.2,-6){\makebox(0,0)[rc]{\(6\,561\)}}
\put(-4.8,-6){\makebox(0,0)[lc]{\(3^8\)}}
\put(-5.2,-8){\makebox(0,0)[rc]{\(19\,683\)}}
\put(-4.8,-8){\makebox(0,0)[lc]{\(3^9\)}}
\put(-5.2,-10){\makebox(0,0)[rc]{\(59\,049\)}}
\put(-4.8,-10){\makebox(0,0)[lc]{\(3^{10}\)}}
\put(-5.2,-12){\makebox(0,0)[rc]{\(177\,147\)}}
\put(-4.8,-12){\makebox(0,0)[lc]{\(3^{11}\)}}
\put(-5.2,-14){\makebox(0,0)[rc]{\(531\,441\)}}
\put(-4.8,-14){\makebox(0,0)[lc]{\(3^{12}\)}}
\put(-5.2,-16){\makebox(0,0)[rc]{\(1\,594\,323\)}}
\put(-4.8,-16){\makebox(0,0)[lc]{\(3^{13}\)}}
\put(-5.2,-18){\makebox(0,0)[rc]{\(4\,782\,969\)}}
\put(-4.8,-18){\makebox(0,0)[lc]{\(3^{14}\)}}
\put(-5,-18){\vector(0,-1){2}}

\put(0.1,0.2){\makebox(0,0)[lb]{\(\langle 8\rangle\)}}
\put(0.1,-1.8){\makebox(0,0)[lb]{\(\langle 54\rangle\) (not coclass-settled)}}
\put(1.1,-2.8){\makebox(0,0)[lb]{\(1^{\text{st}}\) bifurcation}}
\put(0.1,-3.8){\makebox(0,0)[lb]{\(\langle 303\rangle\)}}
\put(0.1,-5.8){\makebox(0,0)[lb]{\(1;1\)}}
\put(0.1,-7.8){\makebox(0,0)[lb]{\(1;1\)}}
\put(0.1,-9.8){\makebox(0,0)[lb]{\(1;1\)}}
\put(0.1,-11.8){\makebox(0,0)[lb]{\(1;1\)}}
\multiput(0,0)(0,-2){7}{\circle*{0.1}}
\multiput(0,0)(0,-2){6}{\line(0,-1){2}}
\put(0,-12){\vector(0,-1){2}}
\put(-0.2,-14.2){\makebox(0,0)[rt]{\(\mathcal{T}_\ast^2(\langle 243,8\rangle)\)}}

\put(-3,-4.2){\makebox(0,0)[ct]{\(\langle 306\rangle\)}}
\put(-3,-8.2){\makebox(0,0)[ct]{\(1;6\)}}
\put(-3,-12.2){\makebox(0,0)[ct]{\(1;6\)}}
\multiput(0,-2)(0,-4){3}{\line(-3,-2){3}}
\multiput(-3,-4)(0,-4){3}{\circle*{0.1}}

\put(-2,-4.4){\makebox(0,0)[ct]{\(\langle 302\rangle\)}}
\put(-2,-8.4){\makebox(0,0)[ct]{\(1;4\)}}
\put(-2,-12.4){\makebox(0,0)[ct]{\(1;4\)}}
\multiput(0,-2)(0,-4){3}{\line(-1,-1){2}}
\multiput(-2,-4)(0,-4){3}{\circle*{0.1}}

\put(-1,-4.2){\makebox(0,0)[ct]{\(\langle 304\rangle\)}}
\put(-1,-8.2){\makebox(0,0)[ct]{\(1;2\)}}
\put(-1,-12.2){\makebox(0,0)[ct]{\(1;2\)}}
\multiput(0,-2)(0,-4){3}{\line(-1,-2){1}}
\multiput(-1,-4)(0,-4){3}{\circle*{0.1}}


\multiput(1,-6)(1,0){3}{\vector(-2,1){4}}
\multiput(5,-12)(1,0){3}{\vector(-2,1){8}}
\multiput(9,-18)(1,0){3}{\vector(-2,1){12}}

\put(0,-2){\line(1,-1){4}}

\put(4.1,-5.8){\makebox(0,0)[lb]{\(2;3\)}}
\put(4.1,-7.8){\makebox(0,0)[lb]{\(1;1\) (not coclass-settled)}}
\put(5.1,-8.8){\makebox(0,0)[lb]{\(2^{\text{nd}}\) bifurcation}}
\put(4.1,-9.8){\makebox(0,0)[lb]{\(1;1\)}}
\put(4.1,-11.8){\makebox(0,0)[lb]{\(1;1\)}}
\put(4.1,-13.8){\makebox(0,0)[lb]{\(1;1\)}}
\multiput(3.95,-6.05)(0,-2){5}{\framebox(0.1,0.1){}}
\multiput(4,-6)(0,-2){4}{\line(0,-1){2}}
\put(4,-14){\vector(0,-1){2}}
\put(3.8,-16.2){\makebox(0,0)[rt]{\(\mathcal{T}_\ast^3(\langle 729,54\rangle-\#2;3)\)}}

\put(1,-6.2){\makebox(0,0)[ct]{\(2;6\)}}
\put(1,-10.2){\makebox(0,0)[ct]{\(1;6\)}}
\put(1,-14.2){\makebox(0,0)[ct]{\(1;6\)}}
\put(0,-2){\line(1,-4){1}}
\multiput(4,-8)(0,-4){2}{\line(-3,-2){3}}
\multiput(0.95,-6.05)(0,-4){3}{\framebox(0.1,0.1){}}

\put(2,-6.4){\makebox(0,0)[ct]{\(2;2\)}}
\put(2,-10.4){\makebox(0,0)[ct]{\(1;4\)}}
\put(2,-14.4){\makebox(0,0)[ct]{\(1;4\)}}
\put(0,-2){\line(1,-2){2}}
\multiput(4,-8)(0,-4){2}{\line(-1,-1){2}}
\multiput(1.95,-6.05)(0,-4){3}{\framebox(0.1,0.1){}}

\put(3,-6.2){\makebox(0,0)[ct]{\(2;4\)}}
\put(3,-10.2){\makebox(0,0)[ct]{\(1;2\)}}
\put(3,-14.2){\makebox(0,0)[ct]{\(1;2\)}}
\put(0,-2){\line(3,-4){3}}
\multiput(4,-8)(0,-4){2}{\line(-1,-2){1}}
\multiput(2.95,-6.05)(0,-4){3}{\framebox(0.1,0.1){}}

\put(4,-8){\line(1,-1){4}}

\put(8.1,-11.8){\makebox(0,0)[lb]{\(2;1\)}}
\put(8.1,-13.8){\makebox(0,0)[lb]{\(1;1\) (not coclass-settled)}}
\put(9.1,-14.8){\makebox(0,0)[lb]{\(3^{\text{rd}}\) bifurcation}}
\put(8.1,-15.8){\makebox(0,0)[lb]{\(1;1\)}}
\multiput(7.95,-12.05)(0,-2){3}{\framebox(0.1,0.1){}}
\multiput(8,-12)(0,-2){2}{\line(0,-1){2}}
\put(8,-16){\vector(0,-1){2}}
\put(7.8,-18.2){\makebox(0,0)[rt]{\(\mathcal{T}_\ast^4(\langle 729,54\rangle-\#2;3-\#1;1-\#2;1)\)}}

\put(5,-12.2){\makebox(0,0)[ct]{\(2;6\)}}
\put(5,-16.2){\makebox(0,0)[ct]{\(1;6\)}}
\put(4,-8){\line(1,-4){1}}
\multiput(8,-14)(0,-4){1}{\line(-3,-2){3}}
\multiput(4.95,-12.05)(0,-4){2}{\framebox(0.1,0.1){}}

\put(6,-12.4){\makebox(0,0)[ct]{\(2;4\)}}
\put(6,-16.4){\makebox(0,0)[ct]{\(1;4\)}}
\put(4,-8){\line(1,-2){2}}
\multiput(8,-14)(0,-4){1}{\line(-1,-1){2}}
\multiput(5.95,-12.05)(0,-4){2}{\framebox(0.1,0.1){}}

\put(7,-12.2){\makebox(0,0)[ct]{\(2;2\)}}
\put(7,-16.2){\makebox(0,0)[ct]{\(1;2\)}}
\put(4,-8){\line(3,-4){3}}
\multiput(8,-14)(0,-4){1}{\line(-1,-2){1}}
\multiput(6.95,-12.05)(0,-4){2}{\framebox(0.1,0.1){}}

\put(8,-14){\line(1,-1){4}}

\put(12.1,-17.8){\makebox(0,0)[lb]{\(2;1\)}}
\multiput(11.95,-18.05)(0,-2){1}{\framebox(0.1,0.1){}}
\put(12,-18){\vector(0,-1){2}}
\put(11.8,-19.9){\makebox(0,0)[rt]{\(\mathcal{T}_\ast^5(\langle 729,54\rangle-\#2;3-\#1;1-\#2;1-\#1;1-\#2;1)\)}}

\put(9,-18.2){\makebox(0,0)[ct]{\(2;6\)}}
\put(8,-14){\line(1,-4){1}}
\multiput(8.95,-18.05)(0,-4){1}{\framebox(0.1,0.1){}}

\put(10,-18.4){\makebox(0,0)[ct]{\(2;4\)}}
\put(8,-14){\line(1,-2){2}}
\multiput(9.95,-18.05)(0,-4){1}{\framebox(0.1,0.1){}}

\put(11,-18.2){\makebox(0,0)[ct]{\(2;2\)}}
\put(8,-14){\line(3,-4){3}}
\multiput(10.95,-18.05)(0,-4){1}{\framebox(0.1,0.1){}}

\put(-5,-20.9){\makebox(0,0)[cc]{\textbf{TKT:}}}
\put(-3,-21){\makebox(0,0)[cc]{\(\varkappa_1\)}}
\put(-2,-21){\makebox(0,0)[cc]{\(\varkappa_2\)}}
\put(-1,-21){\makebox(0,0)[cc]{\(\varkappa_3\)}}
\put(0,-21){\makebox(0,0)[cc]{\(\varkappa_0\)}}
\put(1,-21){\makebox(0,0)[cc]{\(\varkappa_1\)}}
\put(2,-21){\makebox(0,0)[cc]{\(\varkappa_2\)}}
\put(3,-21){\makebox(0,0)[cc]{\(\varkappa_3\)}}
\put(4,-21){\makebox(0,0)[cc]{\(\varkappa_0\)}}
\put(5,-21){\makebox(0,0)[cc]{\(\varkappa_1\)}}
\put(6,-21){\makebox(0,0)[cc]{\(\varkappa_2\)}}
\put(7,-21){\makebox(0,0)[cc]{\(\varkappa_3\)}}
\put(8,-21){\makebox(0,0)[cc]{\(\varkappa_0\)}}
\put(9,-21){\makebox(0,0)[cc]{\(\varkappa_1\)}}
\put(10,-21){\makebox(0,0)[cc]{\(\varkappa_2\)}}
\put(11,-21){\makebox(0,0)[cc]{\(\varkappa_3\)}}
\put(12,-21){\makebox(0,0)[cc]{\(\varkappa_0\)}}
\put(-5.8,-21.2){\framebox(18.6,0.6){}}

\multiput(-2,-4.25)(0,-4){3}{\oval(3,1.5)}

\multiput(2,-6.25)(4,-6){3}{\oval(3,1.5)}

\end{picture}

\end{figure}
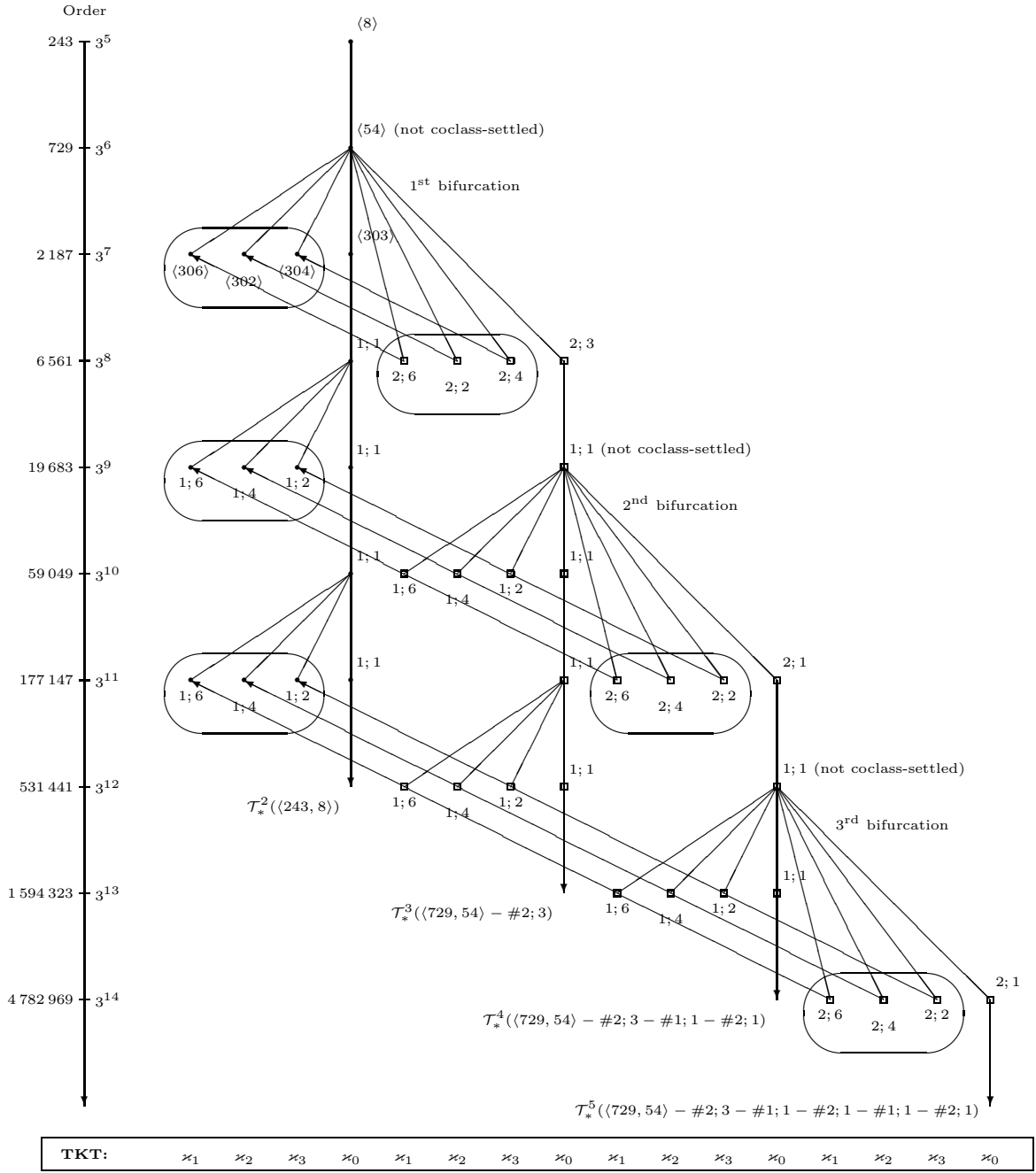

}



Within the frame of our computations, the periodicity was not
restriced to bifurcations only:
It seems that the pruned (or maybe even the entire) descendant trees
\(\mathcal{T}_\ast(\delta^{2j+1}(G))\) are all isomorphic to
\(\mathcal{T}_\ast(\delta^1(G))\) as graphs.
This is visualized impressively by the Figures
\ref{fig:MultiPeriodQAdmissible}
and
\ref{fig:MultiPeriodUAdmissible},
where the following notation (not to be confused with layers) is used
\[\varkappa_1=(4,1,2,2),\ \varkappa_2=(3,1,2,2),\ \varkappa_3=(1,1,2,2),\ \varkappa_0=(0,1,2,2),\]
resp.
\[\varkappa_1=(2,4,3,4),\ \varkappa_2=(2,3,3,4),\ \varkappa_3=(2,2,3,4),\ \varkappa_0=(2,0,3,4).\]



Similarly as in the previous section,
the extent to which we constructed the pruned descendant trees
suggests the following conjecture.

\begin{conjecture}
\label{cnj:ThreeBifurcation}
Theorem
\ref{thm:ThreeBifurcation},
Corollary
\ref{cor:ThreeBifurcation}
and Corollary
\ref{cor:Covers}
remain true for an arbitrarily large positive integer \(\ell\),
not necessarily bounded by \(8\).
\end{conjecture}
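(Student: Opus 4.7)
The plan is to combine three ingredients: a uniform parametrised polycyclic pc-presentation for the mainline vertices on the distinguished alternating path (in the spirit of Theorem \ref{thm:2ParamPres} for the \(2\)-group scenario), the virtual periodicity theorem of Eick and Leedham-Green (Theorem \ref{thm:Periodicity}) applied inside each individual coclass tree \(\mathcal{T}^{j+2}(\delta^{2j+1}(G))\), and a multifurcation lemma that controls how the local invariants \((\mu,\nu,N_1,C_1,N_2,C_2)\) propagate from bifurcation level \(j\) to level \(j+1\) along the alternating edges of step size \(1\) and \(2\).

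First I would construct, by induction on \(j\), a two-parameter polycyclic pc-presentation for the mainline descendants \(\delta^{2j+1}(G)(-\#1;1)^k\), depending on the coclass index \(j\) and the class index \(k\), in direct analogy with (\ref{eqn:TwoParamPres}). The presentation at level \(j+1\) is forced by the presentation at level \(j\) together with the relators coming from the unique allowable subgroup of \(R/R^\ast\) which supplements the nucleus at the bifurcating vertex \(\delta^{2j+1}(G)\) and selects \(\delta^{2(j+1)}(G)\) as its root. From such a presentation the transfer kernel type in (\ref{eqn:TKTMainline}) and the layers of the transfer target type in (\ref{eqn:EvenTTT})--(\ref{eqn:OddTTT}) can be read off by commutator calculus applied to the four maximal subgroups of (\ref{eqn:MaxSbgr33}), without further recourse to the \(p\)-group generation algorithm. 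Second, inside each coclass tree \(\mathcal{T}^{j+2}(\delta^{2j+1}(G))\), the mainline together with the three distinguished periodic sequences of depth \(1\) characterised by (\ref{eqn:TKTPrdSequences}) is detected by Theorem \ref{thm:Periodicity}; one must verify that Eick--Leedham-Green's effective lower bound \(f(1)\) equals \(1\) uniformly in \(j\), a statement about the cohomology of the corresponding \(p\)-adic pre-space group that should follow from the explicit presentation. Third, Corollary \ref{cor:Covers} would follow from a metabelianisation argument: any Schur \(\sigma\)-group \(D\) covering \(V_{0,2k}\) has its second derived quotient \(D/D^{\prime\prime}\) forced onto the chain of iterated bifurcations above \(V_{0,2k}\), and the explicit parametrised presentation then forces \(D\in\lbrace V_{j,2(k-j)}\mid 1\le j\le k\rbrace\cup\lbrace S_k\rbrace\); uniqueness of the balanced cover is extracted from Shafarevich's identity (\ref{eqn:Shafarevich}) applied to the three irregular siblings populating each bifurcation bunch.

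The main obstacle is the cross-coclass step, that is, proving that the bifurcation pattern itself is periodic in \(j\). Theorem \ref{thm:Periodicity} is an intra-coclass-tree result and says nothing about how a coclass-\(r\) tree sits relative to the adjacent coclass-\(r+1\) tree across a multifurcation. What is really needed is an invariance lemma of the following shape: if a capable metabelian \(3\)-group \(M\) with \(M/M^\prime\simeq(3,3)\) has \(\nu(M)=2\), \(\mu(M)=4\), descendant counters \((N_1,C_1;N_2,C_2)=(8,3;6,3)\), and a specified pair of central generators of its nucleus, then the step-size-\(2\) descendant of \(M\) distinguished by the periodicity condition inherits exactly the same tuple of invariants together with a pc-presentation obtained from that of \(M\) by adjoining one new commutator relator and one new \(3\)-power relator. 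Establishing such a lemma amounts to an induction on \(j\) exploiting the action of the extended automorphism group on allowable subgroups as in \S\ref{s:Orbits}, and it is precisely this inductive step that would replace the computational verification up to \(\ell=8\) by a proof valid for all \(\ell\).
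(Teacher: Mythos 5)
The statement you are addressing is a conjecture: the paper itself offers no proof, only computational evidence obtained by running the \(p\)-group generation algorithm up to \(\ell=8\) (class \(19\), coclass \(10\)), and the introduction explicitly describes periodic bifurcations as an ``up to now unproved'' phenomenon. Your text is therefore to be judged as a purported proof of an open statement, and as such it has a genuine gap that you in fact name yourself: the cross-coclass step. Theorem \ref{thm:Periodicity} (du Sautoy, Eick--Leedham-Green) governs branches inside a single coclass tree and says nothing about how the tree \(\mathcal{T}^{j+2}(\delta^{2j+1}(G))\) relates to \(\mathcal{T}^{j+3}(\delta^{2j+3}(G))\) across the bifurcation. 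Your proposed ``invariance lemma'' -- that the distinguished step-size-\(2\) descendant of a vertex with \((\mu,\nu)=(4,2)\) and counters \((8,3;6,3)\) inherits the same invariants together with a presentation obtained by adjoining one commutator and one power relator -- is precisely the content of the conjecture restated in local form; asserting that it ``amounts to an induction on \(j\) exploiting the action of the extended automorphism group'' does not establish it. At the bifurcating vertex there are six step-size-\(2\) descendants (three capable), so one must actually control the orbit structure of allowable subgroups in the \(2\)-multiplicator uniformly in \(j\), and no mechanism for doing so is given.

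Several auxiliary steps are also asserted rather than proved. The claim that the effective Eick--Leedham-Green bound satisfies \(f(1)=1\) uniformly in \(j\) ``should follow from the explicit presentation'' is unsupported; in the paper the strict periodicity of length \(2\) of each pruned coclass tree is a computational observation, not a consequence of the published effective bounds, and establishing it uniformly in \(j\) presupposes knowledge of the infinite pro-\(3\) groups heading the trees at every coclass, which is again part of what is conjectured. Likewise, the uniform two-parameter presentation for the mainlines of \(\mathcal{T}^{j+2}(\delta^{2j+1}(G))\) that you want to construct ``by induction on \(j\)'' is only exhibited in the paper for the first three coclass trees (Equations (\ref{eqn:Scnd3ClsGrp})--(\ref{eqn:3TwrGrp262744ES})), with visibly growing sets of extra relations involving \(t_3,u_5,u_7\), so the inductive step is not a formal adjunction of two relators as in the \(2\)-group presentation (\ref{eqn:TwoParamPres}); and Corollary \ref{cor:Covers} rests on a complete determination of the relevant descendants with respect to (P4), which the paper obtains by exhaustive computation, not by the metabelianisation argument you sketch. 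In short, your plan is a reasonable research programme, and its first and third ingredients are plausible, but the pivotal inductive lemma that would replace the finite computation by a proof for all \(\ell\) is exactly the open problem, so the conjecture remains unproved by your argument.
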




\noindent
One-parameter \textit{polycyclic pc-presentations} for the groups
in the first three pruned coclass trees of \(\mathcal{T}_\ast(\langle 243,6\rangle)\)
are given as follows.

\begin{enumerate}

\item
For the metabelian vertices of the pruned coclass tree \(\mathcal{T}_\ast^2(\delta^{0}(G))\) with class \(c\ge 5\),
that is, starting with \(\langle 2187,285\rangle\)
and excluding the root \(\delta^{0}(G)=\langle 243,6\rangle\) and its descendant \(Q=\langle 729,49\rangle\), by

\begin{equation}
\label{eqn:Scnd3ClsGrp}
\begin{aligned}
 \delta^{0}(G)(-\#1;1)^{c-3} = G_2^c(0,0),\ G_2^c(z,w) := \langle\ x,y,s_2,\ldots,s_c,t_3\ \mid \\
 s_2=\lbrack y,x\rbrack,\ s_j=\lbrack s_{j-1},x\rbrack \text{ for } 3\le j\le c,\ t_3=\lbrack s_2,y\rbrack, \\
 s_j^3=s_{j+2}^2s_{j+3} \text{ for } 2\le j\le c-3,\ s_{c-2}^3=s_c^2,\ t_3^3=1, \\
 x^3 = s_c^w,\ y^3 = s_3^2s_4s_c^z\ \rangle.
\end{aligned}
\end{equation}

\item
For the non-metabelian vertices of the pruned coclass tree \(\mathcal{T}_\ast^3(\delta^{2}(G))\) with class \(c\ge 5\),
and including the Schur \(\sigma\)-groups, which are siblings of the root, by

\begin{equation}
\label{eqn:3TwrGrp9748GS}
\begin{aligned}
 \delta^{2}(G)(-\#1;1)^{c-5} = G_3^c(0,0),\ G_3^c(z,w) := \langle\ x,y,s_2,\ldots,s_c,t_3,u_5\ \mid \\
 s_2=\lbrack y,x\rbrack,\ s_j=\lbrack s_{j-1},x\rbrack \text{ for } 3\le j\le c,\ t_3=\lbrack s_2,y\rbrack, \\
 u_5=\lbrack s_3,y\rbrack=\lbrack s_4,y\rbrack,\ \lbrack s_3,s_2\rbrack=u_5^2,\ t_3^3=u_5^2, \\
 s_2^3=s_4^2s_5u_5,\ s_j^3=s_{j+2}^2s_{j+3} \text{ for } 3\le j\le c-3,\ s_{c-2}^3=s_c^2, \\
 x^3 = s_c^w,\ y^3 = s_3^2s_4s_c^z\ \rangle.
\end{aligned}
\end{equation}

\item
For the non-metabelian vertices of the pruned coclass tree \(\mathcal{T}_\ast^4(\delta^{4}(G))\) with class \(c\ge 7\),
and including the Schur \(\sigma\)-groups, which are siblings of the root, by

\begin{equation}
\label{eqn:3TwrGrp262744ES}
\begin{aligned}
 \delta^{4}(G)(-\#1;1)^{c-7} = G_4^c(0,0),\ G_4^c(z,w) := \langle\ x,y,s_2,\ldots,s_c,t_3,u_5,u_7\ \mid \\
 s_2=\lbrack y,x\rbrack,\ s_j=\lbrack s_{j-1},x\rbrack \text{ for } 3\le j\le c,\ t_3=\lbrack s_2,y\rbrack, \\
 u_5=\lbrack s_4,y\rbrack,\ u_7=\lbrack s_6,y\rbrack,\ \lbrack s_3,s_2\rbrack=u_5^2u_7^2,\ \lbrack s_3,y\rbrack=u_5u_7^2, \\
 \lbrack s_5,y\rbrack=u_7^2,\ \lbrack s_4,s_2\rbrack=u_7^2,\ \lbrack s_5,s_2\rbrack=u_7^2,\ \lbrack s_4,s_3\rbrack=u_7, \\
 s_2^3=s_4^2s_5u_5,\ s_3^3=s_5^2s_6u_7^2,\ t_3^3=u_5^2u_7^2, u_5^3=u_7^2, \\
 s_j^3=s_{j+2}^2s_{j+3} \text{ for } 4\le j\le c-3,\ s_{c-2}^3=s_c^2, \\
 x^3 = s_c^w,\ y^3 = s_3^2s_4s_c^z\ \rangle.
\end{aligned}
\end{equation}

\end{enumerate}

The parameter \(c\) is the nilpotency class of the group,
and the parameters \(0\le w\le 1\) and \(0\le z\le 2\) determine

\begin{itemize}
\item
the location of the group on the descendant tree,
and
\item
the transfer kernel type (TKT) of the group, as follows:
\end{itemize}

\(G_r^c(0,0)\) lies on the mainline (this is the so-called \textit{mainline principle})
and has TKT c.18, \(\varkappa=(0,1,2,2)\),
whereas all the other groups belong to periodic sequences or are isolated Schur \(\sigma\)-groups:\\
\(G_r^c(0,1)\) possesses TKT E.6, \(\varkappa=(1,1,2,2)\),\\
\(G_r^c(1,0)\) and \(G_r^c(2,0)\) have TKT H.4, \(\varkappa=(2,1,2,2)\), and lie outside of the pruned tree,\\
\(G_r^c(1,1)\) and \(G_r^c(2,1)\) have TKT E.14, \(\varkappa=(3,1,2,2)\sim(4,1,2,2)\).

\noindent
In Figure
\ref{fig:3TwrGrp9748GS},
resp.
\ref{fig:3TwrGrp262744ES},
we have drawn the lattice of normal subgroups of \(G_3^5(z,w)\), resp. \(G_4^7(z,w)\).
The \textit{upper} and \textit{lower central series},
\(\zeta(G)\), \(\gamma(G)\),
of these groups form subgraphs
whose relative position justifies the names of these series,
as visualized impressively by Figures
\ref{fig:3TwrGrp9748GS}
and
\ref{fig:3TwrGrp262744ES}.



{\tiny

\begin{figure}[hb]
\caption{Normal Lattice and Central Series of \(G_3^5(z,w)\)}
\label{fig:3TwrGrp9748GS}


\setlength{\unitlength}{0.9cm}
\begin{picture}(15,21)(-5,0)

\put(-4,20.3){\makebox(0,0)[cb]{order \(3^n\)}}
\put(-4,18){\vector(0,1){2}}
\put(-4.2,18){\makebox(0,0)[rc]{\(6561\)}}
\put(-3.8,18){\makebox(0,0)[lc]{\(3^8\)}}
\put(-4.2,16){\makebox(0,0)[rc]{\(2187\)}}
\put(-3.8,16){\makebox(0,0)[lc]{\(3^7\)}}
\put(-4.2,14){\makebox(0,0)[rc]{\(729\)}}
\put(-3.8,14){\makebox(0,0)[lc]{\(3^6\)}}
\put(-4.2,12){\makebox(0,0)[rc]{\(243\)}}
\put(-3.8,12){\makebox(0,0)[lc]{\(3^5\)}}
\put(-4.2,10){\makebox(0,0)[rc]{\(81\)}}
\put(-3.8,10){\makebox(0,0)[lc]{\(3^4\)}}
\put(-4.2,8){\makebox(0,0)[rc]{\(27\)}}
\put(-3.8,8){\makebox(0,0)[lc]{\(3^3\)}}
\put(-4.2,6){\makebox(0,0)[rc]{\(9\)}}
\put(-3.8,6){\makebox(0,0)[lc]{\(3^2\)}}
\put(-4.2,4){\makebox(0,0)[rc]{\(3\)}}
\put(-4.2,2){\makebox(0,0)[rc]{\(1\)}}
\multiput(-4.1,2)(0,2){9}{\line(1,0){0.2}}
\put(-4,2){\line(0,1){16}}

\multiput(5,18)(0,-4){2}{\line(1,0){2}}
\put(6,16.5){\vector(0,1){1.5}}
\put(6,16.25){\makebox(0,0)[cc]{first}}
\put(6,15.75){\makebox(0,0)[cc]{stage}}
\put(6,15.5){\vector(0,-1){1.5}}
\put(6,9.5){\vector(0,1){4.5}}
\put(6,9.25){\makebox(0,0)[cc]{second}}
\put(6,8.75){\makebox(0,0)[cc]{stage}}
\put(6,8.5){\vector(0,-1){4.5}}
\put(6,3.5){\vector(0,1){0.5}}
\put(6,3.25){\makebox(0,0)[cc]{third}}
\put(6,2.75){\makebox(0,0)[cc]{stage}}
\put(6,2.5){\vector(0,-1){0.5}}
\multiput(5,2)(0,2){2}{\line(1,0){2}}

\put(0.3,2){\makebox(0,0)[rc]{\(\zeta_0(G)\)}}
\put(0.5,2){\circle*{0.2}}
\put(0.7,2){\makebox(0,0)[lc]{\(\gamma_6(G)=1\)}}

\put(-1.2,4){\makebox(0,0)[rb]{\(G^{\prime\prime}\)}}
\put(-1.2,3.8){\makebox(0,0)[rt]{\(u_5\)}}
\put(-1,4){\circle*{0.2}}
\multiput(0,4)(1,0){3}{\circle*{0.1}}
\put(2.2,4){\makebox(0,0)[lc]{\(s_5\)}}

\multiput(0.5,6)(-1.5,-2){2}{\line(3,-4){1.5}}
\multiput(0.5,6)(-0.5,-2){2}{\line(1,-4){0.5}}
\multiput(0.5,6)(0.5,-2){2}{\line(-1,-4){0.5}}
\multiput(0.5,6)(1.5,-2){2}{\line(-3,-4){1.5}}
\put(-2.7,6){\makebox(0,0)[rc]{\(t_3\)}}
\multiput(-2.5,6)(1,0){4}{\circle*{0.1}}
\put(0.7,6){\makebox(0,0)[lc]{\(\gamma_5(G)\)}}

\multiput(-1,8)(-1.5,-2){2}{\line(3,-4){1.5}}
\multiput(-1,8)(-0.5,-2){2}{\line(1,-4){0.5}}
\multiput(-1,8)(0.5,-2){2}{\line(-1,-4){0.5}}
\multiput(-1,8)(1.5,-2){2}{\line(-3,-4){1.5}}
\put(-1.2,8){\makebox(0,0)[rc]{\(\zeta_1(G)\)}}
\multiput(-1,8)(1,0){4}{\circle*{0.1}}
\put(2.2,8){\makebox(0,0)[lb]{\(\gamma_4(G)\)}}
\put(2.2,7.8){\makebox(0,0)[lt]{\(s_4\)}}

\multiput(0.5,10)(-1.5,-2){2}{\line(3,-4){1.5}}
\multiput(0.5,10)(-0.5,-2){2}{\line(1,-4){0.5}}
\multiput(0.5,10)(0.5,-2){2}{\line(-1,-4){0.5}}
\multiput(0.5,10)(1.5,-2){2}{\line(-3,-4){1.5}}
\put(0.3,10){\makebox(0,0)[rc]{\(\zeta_2(G)\)}}
\multiput(0.5,10)(1,0){4}{\circle*{0.1}}
\put(3.7,10){\makebox(0,0)[lc]{\(s_3\)}}

\multiput(2,12)(-1.5,-2){2}{\line(3,-4){1.5}}
\multiput(2,12)(-0.5,-2){2}{\line(1,-4){0.5}}
\multiput(2,12)(0.5,-2){2}{\line(-1,-4){0.5}}
\multiput(2,12)(1.5,-2){2}{\line(-3,-4){1.5}}
\put(1.8,12){\makebox(0,0)[rc]{\(\zeta_3(G)\)}}
\put(2,12){\circle*{0.1}}
\put(2.2,12){\makebox(0,0)[lc]{\(\gamma_3(G)\)}}

\put(2,14){\line(0,-1){2}}
\put(1.8,14){\makebox(0,0)[rc]{\(\zeta_4(G)\)}}
\put(2,14){\circle*{0.2}}
\put(2.2,14){\makebox(0,0)[lb]{\(\gamma_2(G)=G^\prime\)}}
\put(2.2,13.8){\makebox(0,0)[lt]{\(s_2\)}}

\put(0.3,16){\makebox(0,0)[rb]{\(H_1\)}}
\put(0.3,15.8){\makebox(0,0)[rt]{\(y\)}}
\put(1.3,16){\makebox(0,0)[rb]{\(H_3\)}}
\multiput(0.5,16)(1,0){4}{\circle*{0.1}}
\put(2.7,16){\makebox(0,0)[lb]{\(H_4\)}}
\put(3.7,16){\makebox(0,0)[lb]{\(H_2\)}}
\put(3.7,15.8){\makebox(0,0)[lt]{\(x\)}}

\multiput(2,18)(-1.5,-2){2}{\line(3,-4){1.5}}
\multiput(2,18)(-0.5,-2){2}{\line(1,-4){0.5}}
\multiput(2,18)(0.5,-2){2}{\line(-1,-4){0.5}}
\multiput(2,18)(1.5,-2){2}{\line(-3,-4){1.5}}
\put(1.8,18){\makebox(0,0)[rc]{\(\zeta_5(G)\)}}
\put(2,18){\circle*{0.2}}
\put(2.2,18){\makebox(0,0)[lc]{\(\gamma_1(G)=G\)}}

\end{picture}

\end{figure}
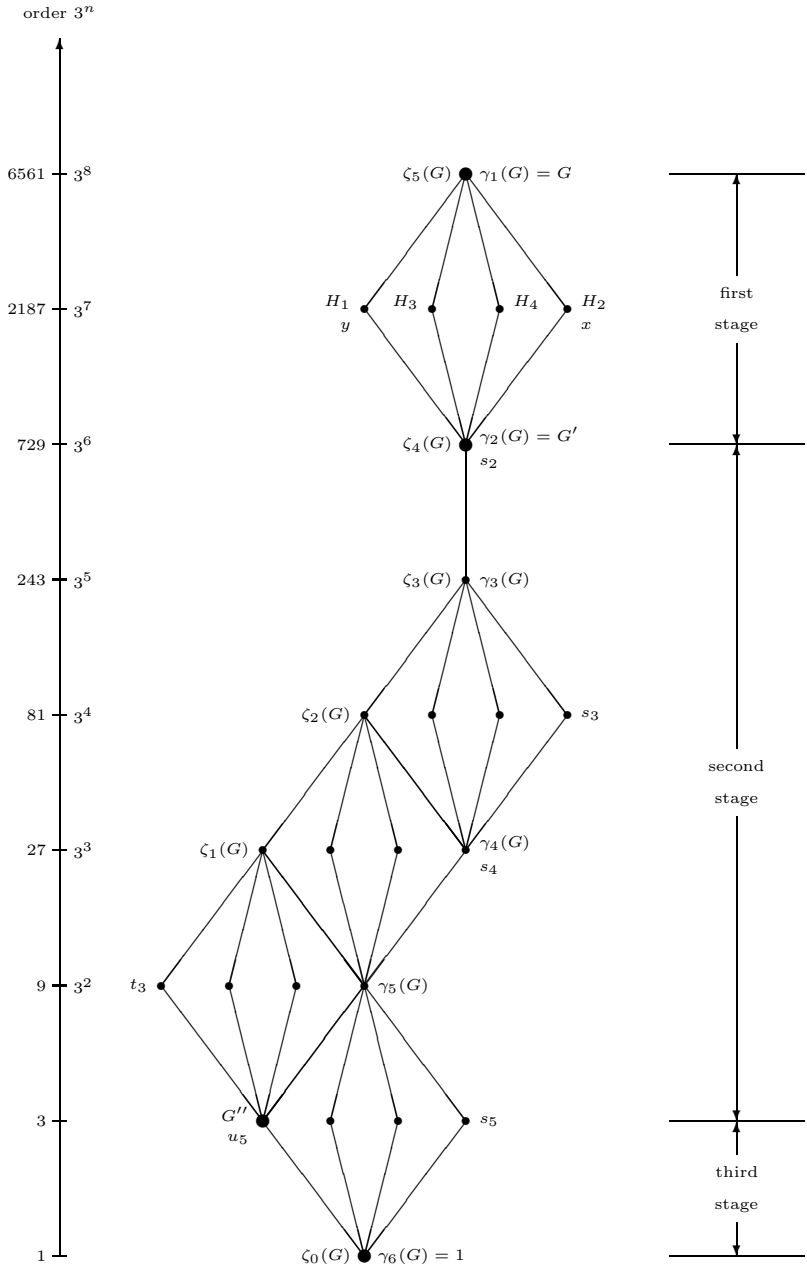

}



{\tiny

\begin{figure}[ht]
\caption{Normal Lattice and Central Series of \(G_4^7(z,w)\)}
\label{fig:3TwrGrp262744ES}


\setlength{\unitlength}{0.7cm}
\begin{picture}(15,27)(-8,0)

\put(-7,26.3){\makebox(0,0)[cb]{order \(3^n\)}}
\put(-7,24){\vector(0,1){2}}
\put(-7.2,24){\makebox(0,0)[rc]{\(177147\)}}
\put(-6.8,24){\makebox(0,0)[lc]{\(3^{11}\)}}
\put(-7.2,22){\makebox(0,0)[rc]{\(59049\)}}
\put(-6.8,22){\makebox(0,0)[lc]{\(3^{10}\)}}
\put(-7.2,20){\makebox(0,0)[rc]{\(19683\)}}
\put(-6.8,20){\makebox(0,0)[lc]{\(3^9\)}}
\put(-7.2,18){\makebox(0,0)[rc]{\(6561\)}}
\put(-6.8,18){\makebox(0,0)[lc]{\(3^8\)}}
\put(-7.2,16){\makebox(0,0)[rc]{\(2187\)}}
\put(-6.8,16){\makebox(0,0)[lc]{\(3^7\)}}
\put(-7.2,14){\makebox(0,0)[rc]{\(729\)}}
\put(-6.8,14){\makebox(0,0)[lc]{\(3^6\)}}
\put(-7.2,12){\makebox(0,0)[rc]{\(243\)}}
\put(-6.8,12){\makebox(0,0)[lc]{\(3^5\)}}
\put(-7.2,10){\makebox(0,0)[rc]{\(81\)}}
\put(-6.8,10){\makebox(0,0)[lc]{\(3^4\)}}
\put(-7.2,8){\makebox(0,0)[rc]{\(27\)}}
\put(-6.8,8){\makebox(0,0)[lc]{\(3^3\)}}
\put(-7.2,6){\makebox(0,0)[rc]{\(9\)}}
\put(-6.8,6){\makebox(0,0)[lc]{\(3^2\)}}
\put(-7.2,4){\makebox(0,0)[rc]{\(3\)}}
\put(-7.2,2){\makebox(0,0)[rc]{\(1\)}}
\multiput(-7.1,2)(0,2){12}{\line(1,0){0.2}}
\put(-7,2){\line(0,1){22}}

\multiput(5,24)(0,-4){2}{\line(1,0){2}}
\put(6,22.5){\vector(0,1){1.5}}
\put(6,22.25){\makebox(0,0)[cc]{first}}
\put(6,21.75){\makebox(0,0)[cc]{stage}}
\put(6,21.5){\vector(0,-1){1.5}}
\put(6,13.5){\vector(0,1){6.5}}
\put(6,13.25){\makebox(0,0)[cc]{second}}
\put(6,12.75){\makebox(0,0)[cc]{stage}}
\put(6,12.5){\vector(0,-1){6.5}}
\put(6,4.5){\vector(0,1){1.5}}
\put(6,4.25){\makebox(0,0)[cc]{third}}
\put(6,3.75){\makebox(0,0)[cc]{stage}}
\put(6,3.5){\vector(0,-1){1.5}}
\multiput(5,2)(0,4){2}{\line(1,0){2}}

\put(-1.2,2){\makebox(0,0)[rc]{\(\zeta_0(G)\)}}
\put(-1,2){\circle*{0.2}}
\put(-0.8,2){\makebox(0,0)[lc]{\(\gamma_8(G)=1\)}}

\put(-2.7,4){\makebox(0,0)[rc]{\(u_7\)}}
\multiput(-2.5,4)(1,0){4}{\circle*{0.1}}
\put(0.7,4){\makebox(0,0)[lc]{\(s_7\)}}

\multiput(-1,6)(-1.5,-2){2}{\line(3,-4){1.5}}
\multiput(-1,6)(-0.5,-2){2}{\line(1,-4){0.5}}
\multiput(-1,6)(0.5,-2){2}{\line(-1,-4){0.5}}
\multiput(-1,6)(1.5,-2){2}{\line(-3,-4){1.5}}
\put(-4.2,6){\makebox(0,0)[rb]{\(G^{\prime\prime}\)}}
\put(-4.2,5.8){\makebox(0,0)[rt]{\(u_5\)}}
\put(-4,6){\circle*{0.2}}
\multiput(-3,6)(1,0){3}{\circle*{0.1}}
\put(-0.8,6){\makebox(0,0)[lc]{\(\gamma_7(G)\)}}

\multiput(-2.5,8)(-1.5,-2){2}{\line(3,-4){1.5}}
\multiput(-2.5,8)(-0.5,-2){2}{\line(1,-4){0.5}}
\multiput(-2.5,8)(0.5,-2){2}{\line(-1,-4){0.5}}
\multiput(-2.5,8)(1.5,-2){2}{\line(-3,-4){1.5}}
\put(-5.7,8){\makebox(0,0)[rc]{\(t_3\)}}
\multiput(-5.5,8)(1,0){4}{\circle*{0.1}}
\multiput(-1.5,8)(1,0){3}{\circle*{0.1}}
\put(0.7,8){\makebox(0,0)[lb]{\(\gamma_6(G)\)}}
\put(0.7,7.8){\makebox(0,0)[lt]{\(s_6\)}}

\multiput(-4,10)(-1.5,-2){2}{\line(3,-4){1.5}}
\multiput(-4,10)(-0.5,-2){2}{\line(1,-4){0.5}}
\multiput(-4,10)(0.5,-2){2}{\line(-1,-4){0.5}}
\multiput(-4,10)(1.5,-2){2}{\line(-3,-4){1.5}}
\multiput(-1,10)(-1.5,-2){2}{\line(3,-4){1.5}}
\multiput(-1,10)(-0.5,-2){2}{\line(1,-4){0.5}}
\multiput(-1,10)(0.5,-2){2}{\line(-1,-4){0.5}}
\multiput(-1,10)(1.5,-2){2}{\line(-3,-4){1.5}}
\put(-4.2,10){\makebox(0,0)[rc]{\(\zeta_1(G)\)}}
\multiput(-4,10)(1,0){4}{\circle*{0.1}}
\multiput(0,10)(1,0){3}{\circle*{0.1}}
\put(2.2,10){\makebox(0,0)[lc]{\(s_5\)}}

\multiput(-2.5,12)(-1.5,-2){2}{\line(3,-4){1.5}}
\multiput(-2.5,12)(-0.5,-2){2}{\line(1,-4){0.5}}
\multiput(-2.5,12)(0.5,-2){2}{\line(-1,-4){0.5}}
\multiput(-2.5,12)(1.5,-2){2}{\line(-3,-4){1.5}}
\multiput(0.5,12)(-1.5,-2){2}{\line(3,-4){1.5}}
\multiput(0.5,12)(-0.5,-2){2}{\line(1,-4){0.5}}
\multiput(0.5,12)(0.5,-2){2}{\line(-1,-4){0.5}}
\multiput(0.5,12)(1.5,-2){2}{\line(-3,-4){1.5}}
\put(-2.7,12){\makebox(0,0)[rc]{\(\zeta_2(G)\)}}
\multiput(-2.5,12)(1,0){4}{\circle*{0.1}}
\put(0.7,12){\makebox(0,0)[lc]{\(\gamma_5(G)\)}}

\multiput(-1,14)(-1.5,-2){2}{\line(3,-4){1.5}}
\multiput(-1,14)(-0.5,-2){2}{\line(1,-4){0.5}}
\multiput(-1,14)(0.5,-2){2}{\line(-1,-4){0.5}}
\multiput(-1,14)(1.5,-2){2}{\line(-3,-4){1.5}}
\put(-1.2,14){\makebox(0,0)[rc]{\(\zeta_3(G)\)}}
\multiput(-1,14)(1,0){4}{\circle*{0.1}}
\put(2.2,14){\makebox(0,0)[lb]{\(\gamma_4(G)\)}}
\put(2.2,13.8){\makebox(0,0)[lt]{\(s_4\)}}

\multiput(0.5,16)(-1.5,-2){2}{\line(3,-4){1.5}}
\multiput(0.5,16)(-0.5,-2){2}{\line(1,-4){0.5}}
\multiput(0.5,16)(0.5,-2){2}{\line(-1,-4){0.5}}
\multiput(0.5,16)(1.5,-2){2}{\line(-3,-4){1.5}}
\put(0.3,16){\makebox(0,0)[rc]{\(\zeta_4(G)\)}}
\multiput(0.5,16)(1,0){4}{\circle*{0.1}}
\put(3.7,16){\makebox(0,0)[lc]{\(s_3\)}}

\multiput(2,18)(-1.5,-2){2}{\line(3,-4){1.5}}
\multiput(2,18)(-0.5,-2){2}{\line(1,-4){0.5}}
\multiput(2,18)(0.5,-2){2}{\line(-1,-4){0.5}}
\multiput(2,18)(1.5,-2){2}{\line(-3,-4){1.5}}
\put(1.8,18){\makebox(0,0)[rc]{\(\zeta_5(G)\)}}
\put(2,18){\circle*{0.1}}
\put(2.2,18){\makebox(0,0)[lc]{\(\gamma_3(G)\)}}

\put(2,20){\line(0,-1){2}}
\put(1.8,20){\makebox(0,0)[rc]{\(\zeta_6(G)\)}}
\put(2,20){\circle*{0.2}}
\put(2.2,20){\makebox(0,0)[lb]{\(\gamma_2(G)=G^\prime\)}}
\put(2.2,19.8){\makebox(0,0)[lt]{\(s_2\)}}

\put(0.3,22){\makebox(0,0)[rb]{\(H_1\)}}
\put(0.3,21.8){\makebox(0,0)[rt]{\(y\)}}
\put(1.3,22){\makebox(0,0)[rb]{\(H_3\)}}
\multiput(0.5,22)(1,0){4}{\circle*{0.1}}
\put(2.7,22){\makebox(0,0)[lb]{\(H_4\)}}
\put(3.7,22){\makebox(0,0)[lb]{\(H_2\)}}
\put(3.7,21.8){\makebox(0,0)[lt]{\(x\)}}

\multiput(2,24)(-1.5,-2){2}{\line(3,-4){1.5}}
\multiput(2,24)(-0.5,-2){2}{\line(1,-4){0.5}}
\multiput(2,24)(0.5,-2){2}{\line(-1,-4){0.5}}
\multiput(2,24)(1.5,-2){2}{\line(-3,-4){1.5}}
\put(1.8,24){\makebox(0,0)[rc]{\(\zeta_7(G)\)}}
\put(2,24){\circle*{0.2}}
\put(2.2,24){\makebox(0,0)[lc]{\(\gamma_1(G)=G\)}}

\end{picture}

\end{figure}

}



\textit{Generators}
\(x,y\in G\setminus G^\prime\),
\(s_2,s_3,t_3,\ldots\in G^\prime\setminus G^{\prime\prime}\), and
\(u_5,u_7\in G^{\prime\prime}\),
are carefully selected independently from individual isomorphism types
and placed in locations which illustrate the structure of the groups.
Furthermore, the \textit{normal lattice of the metabelianization} \(G/G^{\prime\prime}\)
is also included as a subgraph simply by putting \(u_5=1\).

We conclude with a theorem concerning the central series
and some fundamental properties of the Schur \(\sigma\)-groups
which we encountered among all the groups under investigation.

\begin{theorem}
\label{thm:EvenBranches}

Let \(0\le j\le 7\) be an integer.
There exist exactly \(6\) pairwise non-isomorphic groups \(G\)
of order \(3^{3j+8}\), class \(2j+5\), coclass \(j+3\),
having fixed derived length \(3\),
such that

\begin{enumerate}

\item
the factors of their upper central series are given by

\[
\zeta_{j+1}(G)/\zeta_j(G)\simeq
\begin{cases}
(3,3) & \text{ for } j=2j+4, \\
(3) & \text{ for } 1\le j\le 2j+3, \\
(3,3^{j+2}) & \text{ for } j=0, 
\end{cases}
\]

\item
their second derived group \(G^{\prime\prime}<\zeta_1(G)\) is central and cyclic of order \(3^{j+1}\).

\end{enumerate}

\noindent
Furthermore,

\begin{itemize}

\item
they are Schur \(\sigma\)-groups with automorphism group
\(\mathrm{Aut}(G)\) of order \(2\cdot 3^{4j+10}\),

\item
the factors of their lower central series are given by

\[
\gamma_j(G)/\gamma_{j+1}(G)\simeq
\begin{cases}
(3,3) & \text{ for odd } 1\le j\le 2j+5, \\
(3) & \text{ for even } 2\le j\le 2j+4, 
\end{cases}
\]

\item
their metabelianization \(G/G^{\prime\prime}\)
is of order \(3^{2j+7}\), class \(2j+5\) and of fixed coclass \(2\),

\item
their biggest metabelian generalized predecessor,
that is the \((2j+1)\)th generalized parent,
is given by either \(\langle 729,49\rangle\) or \(\langle 729,54\rangle\).

\end{itemize}

\end{theorem}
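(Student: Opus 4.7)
The plan is to identify the six groups explicitly via the bifurcation structure already established, then derive their invariants from the parametrized pc-presentations, and finally verify the Schur $\sigma$-property and non-isomorphism via a mixture of commutator calculus and the $p$-group generation algorithm. Concretely, for each fixed $j$ the six groups should be exactly the isolated Schur $\sigma$-group siblings appearing at the $(j{+}1)$st bifurcation in both pruned trees, i.e.\ the three groups $S_j = \delta^{2j+1}(\langle 243,6\rangle) - \#2;2\vert 4\vert 6$ from Corollary \ref{cor:ThreeBifurcation} applied to the tree of $\langle 243,6\rangle$, together with the three analogous Schur $\sigma$-groups $\delta^{2j+1}(\langle 243,8\rangle) - \#2;4\vert 5\vert 6$ from the tree of $\langle 243,8\rangle$. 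That they are precisely six follows from the fact that each bifurcation of nuclear rank $2$ yields a bunch of exactly three terminal step-size-$2$ descendants with trivial Schur multiplier (visible in Figures \ref{fig:MultiPeriodQAdmissible} and \ref{fig:MultiPeriodUAdmissible}), and the two ancestor chains $\langle 243,6\rangle$ and $\langle 243,8\rangle$ are disjoint by their distinct mainline TKTs c.18 and c.21.

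Next, I would verify the numerical invariants (order $3^{3j+8}$, class $2j+5$, coclass $j+3$, derived length $3$) directly from the presentations of type (\ref{eqn:3TwrGrp9748GS}) and (\ref{eqn:3TwrGrp262744ES}), extended in the obvious periodic fashion to cover all $0 \le j \le 7$. The step-size pattern along the path in Theorem \ref{thm:ThreeBifurcation} contributes $3^{3j+6}$ from the alternating $\#1;1-\#2;1$ chain of length $2j$, plus a final $\#2$-edge of step size $2$, yielding the claimed order; the class follows because each $\#1;1$ raises class by $1$ and each $\#2;1$ raises class by $2$, and the coclass is read off as the difference. Derived length $3$ is inherited from the non-metabelian siblings of the coclass-tree roots, since these groups lie outside $\mathcal{T}^{j+2}$ and their metabelianization is the corresponding vertex $V_{0,2j}$ on the periodic sequence of the tree of $\langle 243,6\rangle$ or $\langle 243,8\rangle$ (which has coclass $2$, order $3^{2j+7}$, class $2j+5$).

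To pin down the central series, I would exploit the explicit generators $s_j$, $t_3$, $u_5$, $u_7,\dots$ displayed in Figures \ref{fig:3TwrGrp9748GS} and \ref{fig:3TwrGrp262744ES} and compute $\gamma_k(G)$ and $\zeta_k(G)$ stepwise via the commutator relations. The lower central factors read off the pc-presentation (one generator $s_k$ plus optional $t_k, u_k$ in the derived group, giving rank $2$ exactly at odd levels, rank $1$ at even ones). The upper central series is then obtained dually by iteratively centralizing $x$ and $y$: because the chain climbs through $G''$ first and then paralleles $\gamma$ in reverse, the factor ranks come out as stated, with the top factor $\zeta_{2j+5}/\zeta_{2j+4}$ of type $(3,3)$ and the anomalous bottom factor $\zeta_1$ of type $(3, 3^{j+2})$ reflecting the cyclic $G'' \subset \zeta_1(G)$ of order $3^{j+1}$ combined with one further central element of order $3$. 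The cyclicity and centrality of $G''$ follow from the commutator identity $[s_3, s_2] = u_5^\ast$ combined with $[G'', G] = 1$ visible in the presentation.

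The main obstacle will be uniformly verifying the Schur $\sigma$-property and the automorphism group order $2 \cdot 3^{4j+10}$ across all $j$, since these invariants are not preserved in any obvious way by the step-$\#2;1$ descent. My plan here is two-pronged: for $0 \le j \le 7$, direct Magma computation using \texttt{pQuotient} and \texttt{AutomorphismGroupPGroup} will confirm all four quantities; and to explain the stable pattern, I would invoke the Boston--Nover inequality (\ref{eqn:BostonNover}) together with Blackhurst's criterion (\ref{eqn:Blackhurst}) to see that $\mu(S_j) = 2$ forces $\nu(S_j) = 0$ and $d(M(S_j)) = 0$, so $S_j$ is Schur. The $\sigma$-property is inherited along the periodic sequence because an inverting automorphism of $V_{0,0}$ lifts step by step through the central extensions giving the $V_{j, 0}$'s and finally through the step-$2$ extension to $S_j$. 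The automorphism order is the delicate part, and I would attack it by showing that each bifurcation contributes exactly a factor $3^4$ to $|\mathrm{Inn}|$ (matching the order growth $3^{3j+8}/|Z|$) while the outer automorphism group stabilizes at a group of order $2 \cdot 3^2$ once $j \ge 0$, the factor $2$ being produced by the $\sigma$-automorphism. Non-isomorphism of the six groups at each level is then immediate from their six distinct TKT layers $\varkappa_1$ listed in (\ref{eqn:TKTPrdSequences}), which are group-theoretic invariants. Finally, the identification of the biggest metabelian generalized predecessor as $\langle 729,49\rangle$ or $\langle 729,54\rangle$ is a direct consequence of the parent-chain construction in Theorem \ref{thm:ThreeBifurcation}, since $\pi^{2j+1}(S_j)$ lands precisely on one of these two vertices.
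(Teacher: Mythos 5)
Your identification of the six groups --- the three terminal step-size-\(2\) Schur \(\sigma\)-descendants \(S_j\) of \(\delta^{2j+1}(\langle 243,6\rangle)\) together with the three analogous descendants of \(\delta^{2j+1}(\langle 243,8\rangle)\) --- is the right one, and your first prong (direct construction and verification with the \(p\)-group generation algorithm in Magma for \(0\le j\le 7\)) is in fact the paper's \emph{entire} proof: Theorem \ref{thm:EvenBranches} is proved there by the same computational paragraph that proves Theorem \ref{thm:ThreeBifurcation}, the search being bounded by nilpotency class \(19\) and coclass \(10\), which is exactly why \(j\) is capped at \(7\). So the load-bearing part of your proposal coincides with the paper; the structural commentary you add is where the problems lie.

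Several of those supporting steps would fail as written. (i) Class bookkeeping: along every directed edge of the descendant tree the nilpotency class increases by exactly one, whatever the step size; the step size is absorbed by the coclass (\S\ \ref{s:Multifurcation}). Your rule ``\(\#1;1\) adds \(1\), \(\#2;1\) adds \(2\)'' would give \(S_j\) class \(3j+6\), contradicting the asserted \(2j+5\) (and the paper's own data \(\mathrm{cl}(\delta^{2j}(G))=2j+3\), \(\mathrm{cl}(\delta^{2j+1}(G))=2j+4\)). (ii) The automorphism-order heuristic is arithmetically inconsistent with the theorem itself: from \(\vert G\vert=3^{3j+8}\), \(\zeta_1(G)\simeq(3,3^{j+2})\) and \(\vert\mathrm{Aut}(G)\vert=2\cdot 3^{4j+10}\) one gets \(\vert\mathrm{Inn}(G)\vert=3^{2j+5}\) and \(\vert\mathrm{Out}(G)\vert=2\cdot 3^{2j+5}\); thus \(\mathrm{Inn}\) gains a factor \(3^2\) (not \(3^4\)) per level and \(\mathrm{Out}\) does not stabilize at order \(2\cdot 3^2\) but grows with \(j\). (iii) Non-isomorphism is not ``immediate from six distinct TKT layers'': within each family two of the three groups carry equivalent TKTs, namely \(\mathrm{E}.14\), \((3,1,2,2)\sim(4,1,2,2)\), resp. \(\mathrm{E}.9\), \((2,3,3,4)\sim(2,4,3,4)\), so the TKT cannot separate them; their distinctness comes from the algorithm returning one representative per orbit of allowable subgroups (together with the disjointness of the two ancestor chains), i.e.\ again from the computation. (iv) The ``obvious periodic'' extension of the presentations (\ref{eqn:Scnd3ClsGrp})--(\ref{eqn:3TwrGrp262744ES}) to all \(j\le 7\), and to the \(\langle 243,8\rangle\) family, is not available in the paper and is precisely the kind of statement that is only conjectural there (Conjecture \ref{cnj:ThreeBifurcation}); likewise the exactness of the count \(6\) among all groups with the stated invariants needs the unpruned sibling data at each bifurcation, not just the pruned trees. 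None of this is fatal to the theorem as stated, because for \(0\le j\le 7\) your Magma prong already does everything --- but then your proof reduces to the paper's, and the added arguments should either be corrected or dropped.
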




\section{Conclusion}
\label{s:Conclusion}

We emphasize that the results of section
\ref{ss:3Groups33}
provide the background for considerably stronger assertions
than those made in
\cite{BuMa}
(which were, however, sufficient already to disprove erroneous claims in
\cite{SoTa,HeSm}).
Firstly, since they concern four TKTs E.6, E.14, E.8, E.9 instead of just TKT E.9,
and secondly, since they apply to varying odd nilpotency class \(5\le\mathrm{cl}(G)\le 19\)
instead of just class \(5\).



\section{Acknowledgements}
\label{s:Acknowledgements}

We gratefully acknowledge that our research is supported by the
Austrian Science Fund (FWF): P 26008-N25.
We are indebted to the anonymous referees for valuable suggestions improving
the exposition and readability.




\end{document}